\documentclass[11pt,reqno]{amsart}

\usepackage[utf8]{inputenc}

\usepackage{lineno}
\usepackage{amsaddr}
\usepackage{a4wide}
\usepackage{hyperref}

\usepackage[all]{xy}

\usepackage{tikz}
\usetikzlibrary{arrows}

\usepackage{marginnote}

\usepackage{amsmath,amsthm,latexsym,xspace,amscd,amssymb,xy}               
\usepackage{color}
\usepackage{enumerate}

\usepackage[backend=biber,defernumbers=false,firstinits=true, url=false, maxbibnames=9]{biblatex}
\addbibresource{strongly.bib}

\DeclareUnicodeCharacter{2217}{~}

\DeclareMathOperator {\Span}{Span}
\DeclareMathOperator {\Ann}{Ann}
\DeclareMathOperator {\id}{id}

\newtheorem{theorem}{Theorem}[section]
\newtheorem{lemma}[theorem]{Lemma}
\newtheorem{corollary}[theorem]{Corollary}
\newtheorem{proposition}[theorem]{Proposition}

\theoremstyle{definition}
\newtheorem{definition}[theorem]{Definition}
\newtheorem{example}[theorem]{Example}
\newtheorem{remark}[theorem]{Remark}

\author{Daniel Lännström}
\address{Department of Mathematics and Natural Sciences,
Blekinge Institute of Technology,
SE-37179 Karlskrona, Sweden}

\email{{\scriptsize daniel.lannstrom@bth.se}}

\date{\today}
\keywords{group graded ring, epsilon-strongly graded ring, Cuntz-Pimsner ring, Leavitt path algebra, corner skew Laurent polynomial ring}

\subjclass[2010]{16S99,16W50}
\title{The graded structure of algebraic Cuntz-Pimsner rings}
\begin{document}
\maketitle

\begin{abstract}
Algebraic Cuntz-Pimsner rings are naturally $\mathbb{Z}$-graded rings that generalize corner skew Laurent polynomial rings, Leavitt path algebras and unperforated $\mathbb{Z}$-graded Steinberg algebras. In this article, we  characterize strongly, epsilon-strongly and nearly epsilon-strongly $\mathbb{Z}$-graded algebraic Cuntz-Pimsner rings up to graded isomorphism. We recover two results by Hazrat on when corner skew Laurent polynomial rings and Leavitt path algebras are strongly graded. As a further application, we characterize noetherian and artinian corner skew Laurent polynomial rings.
%
\end{abstract}

\section{Introduction}


The Cuntz-Pimsner $C^*$-algebras were first introduced by Pimsner in \cite{pimsner12class} and further studied by Katsura in \cite{katsura2004c}. The Cuntz-Pimsner algebra is constructed from a $C^*$-correspondence and comes equipped with a natural gauge action. In a recent article, Chirvasitu \cite{2018arXiv180512318C} obtained necessary and sufficient conditions for the gauge action to be free. 
The \emph{(algebraic) Cuntz-Pimsner rings} were introduced by Carlsen and Ortega in \cite{carlsen2011algebraic} as algebraic analogues of the Cuntz-Pimsner algebras, and simplicity of Cuntz-Pimsner rings were studied in \cite{carlsen2012simple}. These  rings are interesting to us since they generalize some  very famous families of rings. Indeed, Carlsen and Ortega originally gave two important examples of rings realizable as Cuntz-Pimsner rings: \emph{Leavitt path algebras}  (see \cite[Expl. 5.8]{carlsen2011algebraic} and Section \ref{sec:lpa}) and \emph{corner skew Laurent polynomial rings} (see \cite[Expl. 5.7]{carlsen2011algebraic} and Section \ref{sec:corner}). Recently, Clark, Fletcher, Hazrat and Li \cite{2018arXiv180810114O} showed that unperforated $\mathbb{Z}$-graded Steinberg algebras are also realizable as  Cuntz-Pimsner rings. The Cuntz-Pimsner rings do not come with a gauge action but instead a natural $\mathbb{Z}$-grading. This grading is the main object of study in this article. 

In the case of Leavitt path algebras, the natural $\mathbb{Z}$-grading was systematically investigated by Hazrat \cite{hazrat2013graded}. In particular, he obtained necessary and sufficient conditions for the Leavitt path algebra of a finite graph to be strongly $\mathbb{Z}$-graded (see \cite[Thm. 3.15]{hazrat2013graded}). The class of \emph{epsilon-strongly graded rings} was first introduced by Nystedt, Öinert and Pinedo in \cite{nystedt2016epsilon} as a generalization of unital strongly graded rings. This subclass of graded rings has been investigated further by the author in \cite{lannstrom2018chain, lannstrom2018induced}. Interestingly, the Leavitt path algebra of a finite graph was proved to be epsilon-strongly $\mathbb{Z}$-graded by Nystedt and Öinert (see \cite[Thm. 1.2]{nystedt2017epsilon}). Seeking to extend their result, they introduced the notion of a \emph{nearly epsilon-strongly graded ring} (see Definition \ref{def:nystedt_epsilon}) and proved that every Leavitt path algebra (even for infinite graphs) is nearly epsilon-strongly $\mathbb{Z}$-graded (see \cite[Thm. 1.3]{nystedt2017epsilon}). In other words, there are sufficient conditions in the literature for the natural $\mathbb{Z}$-grading of a Leavitt path algebra to be strong, epsilon-strong and nearly epsilon-strong respectively. These types of gradings have certain structural properties that help us understand the Leavitt path algebras. The present work began as an effort to generalize the previously mentioned results about Leavitt path algebras to a larger class of Cuntz-Pimsner rings. It turns out that we can obtain partial characterizations of nearly epsilon-strongly and epsilon-strongly graded Cuntz-Pimsner rings (see Theorem \ref{thm:1} and Theorem \ref{thm:epsilon}). For unital strongly graded Cuntz-Pimsner rings we obtain a complete characterization (see Theorem \ref{thm:2}). For that purpose, we obtain sufficient conditions for a Cuntz-Pimsner ring to be strongly graded (see Corollary \ref{cor:cuntz_strongly}). In particular, we recover Hazrat's results on Leavitt path algebras (see Corollary \ref{cor:lpa_strong}) and corner skew Laurent polynomial ring (see Corollary \ref{cor:fractional_strong}) as special cases.

\smallskip

Carlsen and Ortega \cite{carlsen2011algebraic} constructed the Cuntz-Pimsner rings using a categorical approach. Let $R$ be an associative but not necessarily unital ring. Recall (see \cite[Def. 1.1]{carlsen2011algebraic}) that an \emph{$R$-system} is a triple $(P, Q, \psi)$ where $P$ and $Q$ are $R$-bimodules and $\psi \colon P \otimes_R Q \to R$ is an $R$-bimodule homomorphism where $P \otimes_R Q$ denotes the balanced tensor product. A technical assumption called Condition (FS) (see Definintion \ref{def:cond_fs}) is generally imposed on the $R$-system $(P,Q,\psi)$.  We will introduce two special types of $R$-systems called \emph{s-unital} and \emph{unital $R$-systems} (see Definition \ref{def:s-unital}). Given an $R$-system, Carlsen and Ortega considered representations of that system. This is the key definition in their construction:

\begin{definition}(\cite[Def. 1.2, Def. 3.3]{carlsen2011algebraic})
Let $R$ be a ring and let $(P,Q,\psi)$ be an $R$-system. A \emph{covariant representation} is a tuple $(S, T, \sigma, B)$ such that the following assertions hold:
\begin{enumerate}[(a)]
\begin{item}
$B$ is a ring;
\end{item}
\begin{item}
$S \colon P \to B$ and $T \colon Q \to B$ are additive maps;
\end{item}
\begin{item}
$\sigma \colon R \to B$ is a ring homomorphism;
\end{item}
\begin{item}
$S(pr)=S(p)\sigma(r), S(rp)=\sigma(r)S(p), T(qr)=T(q)\sigma(r), T(rq)=\sigma(r)T(q)$ for all $r \in R$, $q \in Q$ and $p \in P$;
\end{item}
\begin{item}
$\sigma(\psi(p \otimes q)) = S(p)T(q)$ for all $p \in P$ and $q \in Q$.
\end{item}
\end{enumerate}
The covariant representation $(S,T,\sigma, B)$ is \emph{injective} if the map $\sigma$ is injective. The covariant representation $(S,T,\sigma, B)$ is \emph{surjective} if $B$ is generated as a ring by $\sigma(R) \cup S(P) \cup T(Q)$. 

A surjective covariant representation $(S,T,\sigma, B)$ is called \emph{graded} if there is a $\mathbb{Z}$-grading $\{ B_i \}_{i \in \mathbb{Z}}$ of $B$ such that $\sigma(R) \subseteq B_0$, $T(Q) \subseteq B_1$ and $S(P) \subseteq B_{-1}$. 
\label{def:covariant_representation}
\end{definition}
\begin{remark}
Let $(S,T,\sigma,B)$ be a covariant representation and assume that $B$ is $\mathbb{Z}$-graded. Note that $(S,T,\sigma,B)$ is a graded covariant representation if and only if the grading of $B$ is compatible with the representation structure. 
\end{remark}

Carlsen and Ortega \cite{carlsen2011algebraic} then considered the category of surjective covariant representations of $(P,Q,\psi)$ denoted by $\mathcal{C}_{(P,Q,\psi)}$. The maps between $(S, T, \sigma, B)$ and $(S', T', \sigma', B')$ are ring homomorphisms $\phi \colon B \to B'$ such that $\phi \circ S = S'$, $\phi \circ T = T'$ and $\phi \circ \sigma = \sigma'$. We write $(S,T,\sigma, B) \cong_{\text{r}} (S', T', \sigma', B')$ if the covariant representations are isomorphic as objects in $\mathcal{C}_{(P,Q,\psi)}$. In the case when $(P,Q,\psi)$ satisfies Condition (FS) (see Definition \ref{def:cond_fs}), they obtained a complete characterization of injective, graded, surjective covariant representations up to isomorphism in $\mathcal{C}_{(P,Q,\psi)}$ (see \cite[Sect. 7]{carlsen2011algebraic}). 
The \emph{Cuntz-Pimsner rings} are  defined as certain universal covariant representations (see Definition \ref{def:cp_ring}). Unlike in the $C^*$-setting, the Cuntz-Pimsner ring is not well-defined for all $R$-systems $(P,Q,\psi)$ (see \cite[Expl. 4.11]{carlsen2011algebraic}).


Let both $R$ and $(P,Q,\psi)$ vary. If a $\mathbb{Z}$-graded ring $B$ shows up in a graded covariant representation $(S,T,\sigma, B)$ of some $R$-system $(P,Q,\psi)$, then we call $B$ a \emph{representation ring}. Following Clark, Fletcher, Hazrat and Li \cite{2018arXiv180810114O}, we then say that $B$ is \emph{realized by} the representation $(S,T,\sigma,B)$ of the $R$-system $(P,Q,\psi)$.  


The key new technique of this article is to consider a special type of graded covariant representations:
\begin{definition}
Let $R$ be a ring, let $(P,Q,\psi)$ be an $R$-system and let $(S,T,\sigma,B)$ be a graded covariant representation of $(P,Q,\psi)$. For $k \geq 0$, let $I_{\psi,\sigma}^{(k)}$ be the $B_0$-ideal generated by the set $\{ \sigma(\psi_k (p \otimes q)) \mid p \in P^{\otimes k}, q \in Q^{\otimes k} \} \subseteq B_0$.  We call $(S,T,\sigma, B)$ a \emph{semi-full} covariant representation if $B_{-k} B_k = I_{\psi,\sigma}^{(k)}$ for every $k \geq 0$. 
\label{def:semi-full}
\end{definition}
\begin{remark}
A $C^*$-correspondence $(A,E,\phi)$ is called \emph{full} if the closure of $\langle x, y \rangle$ for $x,y \in E$ spans $A$. One way to generalize this to the algebraic setting is to require that $\psi$ be surjective. Semi-fullness is a weaker condition. Indeed, if $R$ is unital and $\psi$ is surjective, then every graded covariant representation of $(P,Q,\psi)$ is semi-full.  
\end{remark}

Below is an outline of the rest of this article:

\smallskip

In Section \ref{sec:prelim}, we recall the definitions of nearly epsilon-strongly graded rings and algebraic Cuntz-Pimsner rings. 

In Section \ref{sec:necessary}, we prove that certain nearly epsilon-strongly $\mathbb{Z}$-graded Cuntz-Pimsner rings can be realized from semi-full covariant representations (see Corollary \ref{cor:reduction}). This is based on recent work by Clark, Fletcher, Hazrat and Li \cite{2018arXiv180810114O} and  is the crucial reduction step in the characterization.

In Section \ref{sec:strongly}, we find sufficient conditions for an injective and graded covariant representation to be strongly $\mathbb{Z}$-graded (see Proposition \ref{prop:strong_suff}). Using our general theorems, we recover two results by Hazrat as special cases (see Corollary \ref{cor:lpa_strong} and Corollary \ref{cor:fractional_strong}).

In Section \ref{sec:epsilon}, we obtain sufficient conditions for an injective and semi-full covariant representation ring to be nearly epsilon-strongly $\mathbb{Z}$-graded and epsilon-strongly $\mathbb{Z}$-graded respectively (see Proposition \ref{prop:nearly_epsilon_suff} and Proposition \ref{prop:epsilon_suff}). 

In Section \ref{sec:characterization}, we obtain partial characterizations of nearly epsilon-strongly and epsilon-strongly graded Cuntz-Pimsner rings (see Theorem \ref{thm:1} and Theorem \ref{thm:epsilon}). For unital strongly graded Cuntz-Pimsner rings we obtain a complete characterization (see Theorem \ref{thm:2}).

In Section \ref{sec:ex}, we collect some important examples. Notably, we give an example of a Leavitt path algebra realizable as a Cuntz-Pimsner ring in two different ways (see Example \ref{ex:1}). We also give an example of a trivial Cuntz-Pimsner ring that is not nearly epsilon-strongly $\mathbb{Z}$-graded (see Example \ref{ex:2}).

In Section \ref{sec:app}, we apply our results to characterize noetherian and artinian corner skew Laurent polynomial rings (see Corollary \ref{cor:artinian}).

\section{Preliminaries}
\label{sec:prelim}

All rings are assumed to be associative but not necessarily equipped with a multiplicative identity element. Let $R$ be a ring and let $A \subseteq R$ be a subset. The $R$-ideal generated by $A$ is denoted by $(A)$. Let $_R M$ be a left $R$-module and let $B \subseteq M$ be a subset. The \emph{$R$-linear span of $B$}, denoted by $\Span_R B$, is the $R$-submodule of $_R M$ generated by $B$. More precisely, $\Span_R B = \Big \{ \sum b_i + \sum r_j \cdot b_j \mid b_i, b_j \in B, r_j \in R \Big \},$ where the sums are finite. 

\subsection{Nearly epsilon-strongly graded rings}

Recall that a ring $S$ is called \emph{$\mathbb{Z}$-graded} if there exists a family of additive subsets $\{ S_i \}_{i \in \mathbb{Z}}$ of $S$ such that $S=\bigoplus_{i \in \mathbb{Z}}S_i$ and $S_m S_n \subseteq S_{m+n}$ for all $m, n \in \mathbb{Z}$. If the stronger condition $S_m S_n = S_{m+n}$ holds for all $m,n \in \mathbb{Z}$, then the $\mathbb{Z}$-grading $\{S_i \}_{i \in \mathbb{Z}}$ is called \emph{strong}. The subsets $S_i$ are called the \emph{homogeneous components} of $S$. The \emph{support} of $S$ is defined to be the set $\text{Supp}(S) = \{ i \in \mathbb{Z} \mid S_i \ne \{ 0 \} \}.$ The component $S_0$ is called the \emph{principal component} of 
$S$. It is straightforward to show that $S_0$ is a subring of $S$.  Next, let $S=\bigoplus_{i \in \mathbb{Z}} S_i$ and $T=\bigoplus_{i \in \mathbb{Z}} T_i$ be two $\mathbb{Z}$-graded rings. A ring homomorphism $\phi \colon S \to T$ is called \emph{graded} if $\phi(S_i) \subseteq T_i$ for each $i \in \mathbb{Z}$. If $\phi \colon S \xrightarrow{\sim} T$ is a graded ring isomorphism, then we write $S \cong_{\text{gr}} T$ and say that $S$ and $T$ are \emph{graded isomorphic}. 

Let $R$ be a ring. Recall that a left (right) $R$-module $_R M$ is called \emph{left (right) s-unital} if for every $x \in M$ there exists some $r_x \in R$ such that $r_x \cdot x = x$ ($x \cdot r_x = x$). A left (right) $R$-module $_R M$ is called \emph{left (right) unital} if there exists some $r \in R$ such that $r \cdot x = x$ ($x \cdot r = x$) for every $x \in M$. Let $R, S$ be rings. A bimodule $_R M _S$ is called \emph{s-unital} (\emph{unital}) if $_ R M$ is left s-unital (unital) and $M _S$ is right s-unital (unital). In particular, an ideal $I$ of $R$ is called \emph{s-unital} (\emph{unital}) if $_R I_R$ is s-unital (unital). 
\begin{remark}
Let $R$ be a ring. It follows from \cite[Thm. 1]{tominaga1976s} that if $M$ is a left (right) s-unital $R$-module, then for any positive integer $n$ and elements $x_1, x_2, \dots, x_n \in M$ there exists some $r \in R$ such that $r \cdot x_i = x_i$ ($x_i \cdot r = x_i$) for all $i \in \{ 1, \dots, n \}$.
\label{rem:s-unital}
\end{remark}

If $S$ is a $\mathbb{Z}$-graded ring, then $S_i$ is an $S_0$-bimodule for every $i \in \mathbb{Z}$ (see \cite[Rmk. 1.1.2]{nastasescu2004methods}). Note that $S_i S_{-i}$ is an ideal of $S_0$ for every $i \in \mathbb{Z}$. Hence, in particular, $S_i$ is an $S_{i} S_{-i} \text{--} S_{-i} S_i$-bimodule for each $i \in \mathbb{Z}$.  The following definitions were introduced by Nystedt and Öinert:

\begin{definition}(\cite[Def. 3.1, Def. 3.2, Def. 3.3]{nystedt2017epsilon})
Let $S=\bigoplus_{i \in \mathbb{Z}} S_i$ be a $\mathbb{Z}$-graded ring. 
\begin{enumerate}[(a)]
\begin{item}
If $S_i$ is an s-unital $S_i S_{-i} \text{--} S_{-i} S_i$-bimodule for each $i \in \mathbb{Z}$, then $S$ is called \emph{nearly epsilon-strongly} $\mathbb{Z}$-graded.
\end{item}
\begin{item}
If $S_i$ is a unital $S_i S_{-i} \text{--} S_{-i} S_i$-bimodule for each $i \in \mathbb{Z}$, then $S$ is called \emph{epsilon-strongly} $\mathbb{Z}$-graded.
\end{item}
\begin{item}
(cf. \cite[Def. 4.5]{clark2018generalized}) If $S_i = S_i S_{-i} S_i$ for every $i \in \mathbb{Z}$, then $S$ is called \emph{symmetrically} $\mathbb{Z}$-graded.
\end{item}
\end{enumerate}
\label{def:nystedt_epsilon}
\end{definition}
\begin{remark}We make two remarks regarding Definition \ref{def:nystedt_epsilon}.
\begin{enumerate}[(a)]
\begin{item}
Nystedt and Öinert made these definitions for general group graded rings graded by an arbitrary group. However, in this article we will only consider the special case of $\mathbb{Z}$-graded rings.
\end{item}
\begin{item}
If $S$ is epsilon-strongly $\mathbb{Z}$-graded, then $S$ is a unital ring (see \cite[Prop. 3.8]{lannstrom2018induced}). In  other words, only unital rings admit an epsilon-strong grading. 
\end{item}
\end{enumerate}
\label{rem:unital_epsilon}
\end{remark}

We recall the following characterizations of nearly epsilon-strongly graded rings and epsilon-strongly graded rings.

\begin{proposition}
(\cite[Prop. 3.1, Prop. 3.3]{nystedt2017epsilon})
Let $S=\bigoplus_{i \in \mathbb{Z}} S_i$ be a $\mathbb{Z}$-graded ring. The following assertions hold:
\begin{enumerate}[(a)]
\begin{item}
$S$ is nearly epsilon-strongly $\mathbb{Z}$-graded if and only if $S$ is symmetrically $\mathbb{Z}$-graded and $S_i S_{-i}$ is an s-unital ideal for each $i \in \mathbb{Z}$;
\end{item}
\begin{item}
$S$ is epsilon-strongly $\mathbb{Z}$-graded if and only if $S$ is symmetrically $\mathbb{Z}$-graded and $S_i S_{-i}$ is a unital ideal for each $i \in \mathbb{Z}$.
\end{item}
\end{enumerate}
\label{prop:nearly_char}
\end{proposition}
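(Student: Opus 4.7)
The plan is to prove both equivalences by handling their forward and reverse implications in parallel, with (b) emerging from (a) by replacing ``s-unital'' with ``unital'' throughout. The core technical input is Remark \ref{rem:s-unital}, which consolidates pointwise s-units into a single s-unit acting on any prescribed finite family.

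For the forward direction of (a), assume $S$ is nearly epsilon-strongly $\mathbb{Z}$-graded. The symmetry condition is immediate: given $x \in S_i$, the left s-unitality of $S_i$ over $S_i S_{-i}$ supplies $u \in S_i S_{-i}$ with $ux = x$, so $x = ux \in S_i S_{-i} \cdot S_i$, and the reverse inclusion is automatic. To establish that $S_i S_{-i}$ is an s-unital ideal, fix $y \in S_i S_{-i}$ and write $y = \sum_{j=1}^n p_j q_j$ with $p_j \in S_i$ and $q_j \in S_{-i}$. Applying Remark \ref{rem:s-unital} to the family $\{p_j\} \subseteq S_i$, which is left s-unital over $S_i S_{-i}$ by hypothesis, produces $u \in S_i S_{-i}$ with $u p_j = p_j$ for every $j$, whence $uy = \sum_j u p_j q_j = y$. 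A parallel argument, invoking the right s-unitality of $S_{-i}$ over $S_{-(-i)} S_{-i} = S_i S_{-i}$ (the hypothesis applied at index $-i$) and Remark \ref{rem:s-unital} on $\{q_j\} \subseteq S_{-i}$, yields $v \in S_i S_{-i}$ with $yv = y$.

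For the reverse direction of (a), assume $S$ is symmetrically graded and each $S_i S_{-i}$ is an s-unital ideal. Given $x \in S_i$, the symmetry hypothesis writes $x = \sum_{j=1}^n p_j q_j r_j$ with $p_j, r_j \in S_i$ and $q_j \in S_{-i}$, and each product $p_j q_j$ lies in $S_i S_{-i}$. The simultaneous s-unit form of the hypothesis supplies $u \in S_i S_{-i}$ satisfying $u(p_j q_j) = p_j q_j$ for every $j$, whence $ux = x$; the right s-unitality of $S_i$ over $S_{-i} S_i$ is handled analogously, using $q_j r_j \in S_{-i} S_i$ together with the hypothesis applied at index $-i$. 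Part (b) proceeds identically, with each pointwise s-unit replaced by a single unit acting on the whole bimodule simultaneously. The main subtlety I anticipate is ensuring that the (s-)units produced by the hypothesis actually lie in the ideal $S_i S_{-i}$ rather than only in $S_0$; the symmetry condition forces idempotency $S_i S_{-i} = S_i (S_{-i} S_i S_{-i}) = (S_i S_{-i})^2$, which provides the extra structure needed to bridge the bimodule and ring-theoretic notions of s-unitality in this setting.
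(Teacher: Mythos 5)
The paper itself gives no proof of Proposition \ref{prop:nearly_char} --- it is quoted from Nystedt and \"Oinert --- and your argument is essentially the standard one from that source: Tominaga's common (s-)unit trick (Remark \ref{rem:s-unital}) for the forward implications, and the decomposition $x=\sum_j p_jq_jr_j$ provided by $S_i=S_iS_{-i}S_i$ for the converses. All of the displayed steps are correct, provided the hypothesis ``$S_iS_{-i}$ is an s-unital (unital) ideal'' is read as ``$S_iS_{-i}$ is an s-unital (unital) \emph{ring}'', i.e.\ the (s-)units can be chosen inside $S_iS_{-i}$ itself. That is the intended reading (it is how the cited source formulates the result, and how this paper later uses it, e.g.\ when $\epsilon_i\in A_iA_{-i}$ is taken to be the multiplicative identity element of $A_iA_{-i}$), and it is exactly what your proof invokes when you take $u\in S_iS_{-i}$ with $u(p_jq_j)=p_jq_j$ for all $j$.

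Your closing remark, however, is not correct: idempotency of $S_iS_{-i}$ does \emph{not} let you pass from s-unitality of ${}_{S_0}(S_iS_{-i})_{S_0}$ as a bimodule to s-unitality of $S_iS_{-i}$ as a ring. For a counterexample, let $A$ be an idempotent ring that is not s-unital (such rings exist; cf.\ Example \ref{ex:2}) and let $R$ be its unitization; then $I=A$ is an idempotent ideal of $R$ and ${}_RI_R$ is even unital via $1_R$, yet $I$ admits no (s-)units of its own. In fact, under the literal bimodule-over-$S_0$ reading the proposition would be false: for the Leavitt path algebra of a rose with infinitely many petals, $S_0$ is unital and the grading is symmetric, so every ideal $S_iS_{-i}$ is unital as an $S_0$-bimodule, but that algebra is not epsilon-strongly $\mathbb{Z}$-graded. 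So drop the idempotency ``bridge'' and simply take the hypothesis to mean that each $S_iS_{-i}$ is an s-unital (resp.\ unital) ring; with that reading, which your main argument already uses, the proof is complete and correct.
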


Moreover, the following implications hold (see \cite[Rem. 3.4(a)]{lannstrom2018induced}):
\begin{equation}
\text{unital strongly graded} \Rightarrow \text{ epsilon strongly graded} \Rightarrow \text{nearly epsilon-strongly graded}.
\label{eq:implications}
\end{equation}

\subsection{The Toeplitz representation}

Let $(P,Q,\psi)$ be an $R$-system. Put $P^{\otimes 0} = Q^{\otimes 0}=R$ and $\psi_0(r_1 \otimes r_2) = r_1 r_2$. Let $\psi_1 = \psi$. For $n > 1$, recursively define $Q^{\otimes n} = Q^{\otimes {n-1}} \otimes Q$ and $P^{\otimes n} = P \otimes P^{\otimes {n-1}}$. Let $\psi_n \colon P^{\otimes n} \otimes Q^{\otimes n} \to R$ be defined by, $$\psi_n((p_1 \otimes p_2) \otimes (q_2 \otimes q_1)) = \psi(p_1 \cdot \psi_{n-1}(p_2 \otimes q_2), q_1),$$ for $p_1 \in P, p_2 \in P^{\otimes {n-1}}, q_1 \in Q,$ and $q_2 \in Q^{\otimes {n-1}}.$ Then, $(P^{\otimes n}, Q^{\otimes n}, \psi_n)$ is an $R$-system for each $n \geq 0$. Furthermore, by \cite[Lem. 1.5]{carlsen2011algebraic}, if $(S,T,\sigma,B)$ is a covariant representation of $(P,Q,\psi)$, then $(S^n, T^n, \sigma, B)$ is a covariant representation of $(P^{\otimes n}, Q^{\otimes n}, \psi_n)$ where $S^n \colon P^{\otimes n} \to B$ and $T^n \colon Q^{\otimes n} \to B$ are maps satisfying the equations $S^n(p_1 \otimes \dots \otimes p_n) = S(p_1)S(p_2)\dots S(p_n)$ and $T^n(q_1 \otimes \dots \otimes q_n) = T(q_1) T(q_2) \dots T(q_n)$ for $q_i \in Q$ and $p_j \in P$.

%

Carlsen and Ortega proved (see \cite[Thm. 1.7]{carlsen2011algebraic}) that there is an injective, surjective and graded covariant representation that satisfies a universal property. This covariant representation is called the \emph{Toeplitz representation} and is denoted by $(\iota_Q, \iota_P, \iota_R, \mathcal{T}_{(P,Q,\psi)})$. The ring $\mathcal{T}_{(P,Q,\psi)}$ is called the \emph{Toeplitz ring}. We recall (see \cite[Thm. 1.7, Prop. 3.1]{carlsen2011algebraic}) the canonical $\mathbb{Z}$-grading of the Toeplitz ring. The ring homomorphism $\iota_R \colon R \to \mathcal{T}_{(P,Q,\psi)}$ (cf. Definition \ref{def:covariant_representation}(c)), turns the ring $\mathcal{T}_{(P,Q,\psi)}$ into an $R$-algebra. For every pair $(m,n)$ of  non-negative integers, consider the following additive subset of $\mathcal{T}_{(P,Q,\psi)}$, 
\begin{align*}
 \mathcal{T}_{(m,n)} &=  \Span_R \{ \iota_{Q^{\otimes m}}(q) \iota_{P^{\otimes n}}(p) \mid  q \in Q^{\otimes m}, p \in P^{\otimes n} \}. 
 \end{align*} 
Carlsen and Ortega showed that $\mathcal{T}_{(P,Q,\psi)} = \bigoplus_{m,n \geq 0} \mathcal{T}_{(m,n)}$ is a semigroup grading of $\mathcal{T}_{(P,Q,\psi)}$ (see \cite[Def. 1.6]{carlsen2011algebraic}). 
For every $i \in \mathbb{Z},$ define,
\begin{equation}
\mathcal{T}_i = \bigoplus_{\substack{i \in \mathbb{Z} \\ m-n=i}} \mathcal{T}_{(m,n)}.
\label{eq:grading}
\end{equation}
The canonical $\mathbb{Z}$-grading of the Toeplitz ring is then given by $\mathcal{T}_{(P,Q,\psi)} = \bigoplus_{i \in \mathbb{Z}} \mathcal{T}_i$. Moreover, the Toeplitz ring satisfies the following universal property:

\begin{theorem}(\cite[Thm. 1.7, Prop. 3.2]{carlsen2011algebraic})
Let $R$ be a ring and let $(P,Q,\psi)$ be an $R$-system. Let $\mathcal{T}_{(P,Q,\psi)} = \bigoplus_{i \in \mathbb{Z}} \mathcal{T}_i$ be the Toeplitz ring associated to $(P,Q,\psi)$ and let $(S,T,\sigma, B)$ be any graded covariant representation of $(P,Q,\psi)$. Then there is a unique $\mathbb{Z}$-graded ring epimorphism $\eta \colon \mathcal{T}_{(P,Q,\psi)} \to B$ such that $\eta \circ \iota_R = \sigma, \eta \circ \iota_Q = T,$ and $\eta \circ \iota_P = S$.
\label{thm:universal}
\end{theorem}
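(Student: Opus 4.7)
The plan is to build $\eta$ in two stages: first invoke the ungraded universal property of the Toeplitz ring to obtain a ring homomorphism, then separately verify surjectivity and gradedness. For the existence of the ring homomorphism, I would cite \cite[Thm.~1.7]{carlsen2011algebraic}: since $(S,T,\sigma,B)$ is, in particular, a (not-necessarily-graded) covariant representation of $(P,Q,\psi)$, that theorem directly produces a unique ring homomorphism $\eta \colon \mathcal{T}_{(P,Q,\psi)} \to B$ with $\eta \circ \iota_R = \sigma$, $\eta \circ \iota_Q = T$ and $\eta \circ \iota_P = S$. Uniqueness among all ring homomorphisms (not merely among graded ones) is a consequence of the fact that the Toeplitz representation is itself surjective, so $\mathcal{T}_{(P,Q,\psi)}$ is generated as a ring by $\iota_R(R) \cup \iota_P(P) \cup \iota_Q(Q)$ and any ring homomorphism is determined by its values on a generating set.

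Surjectivity of $\eta$ is then immediate from the assumption that $(S,T,\sigma,B)$ is a surjective covariant representation: the image of $\eta$ already contains $\sigma(R) \cup S(P) \cup T(Q)$, which by definition generates $B$ as a ring.

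The substantive step is to check that $\eta$ is $\mathbb{Z}$-graded. By the decomposition \eqref{eq:grading}, it suffices to prove $\eta(\mathcal{T}_{(m,n)}) \subseteq B_{m-n}$ for all $m,n \geq 0$. A typical spanning element of $\mathcal{T}_{(m,n)}$ has (up to left/right multiplication by elements of $\iota_R(R)$) the form $\iota_{Q^{\otimes m}}(q)\,\iota_{P^{\otimes n}}(p)$ for $q \in Q^{\otimes m}$, $p \in P^{\otimes n}$, and $\eta$ sends this to $T^m(q)\,S^n(p)$. On simple tensors $q = q_1 \otimes \cdots \otimes q_m$ and $p = p_1 \otimes \cdots \otimes p_n$, the induced-representation identity $T^m(q)\,S^n(p) = T(q_1)\cdots T(q_m)\,S(p_1)\cdots S(p_n)$ combined with $T(Q) \subseteq B_1$ and $S(P) \subseteq B_{-1}$ puts this product in $B_{m-n}$. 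General elements are handled by additivity, and the $R$-span does not spoil the degree since $\sigma(R) \subseteq B_0$. The main (if minor) obstacle is the bookkeeping that identifies $\eta(\iota_{Q^{\otimes m}}(q)\iota_{P^{\otimes n}}(p))$ with $T^m(q)S^n(p)$ on possibly non-simple tensors; this is handled by linearity together with the factorization property of $S^n, T^m$ inherited from the construction of the induced representation on $(P^{\otimes n}, Q^{\otimes m}, \psi_n)$ in \cite[Lem.~1.5]{carlsen2011algebraic}.
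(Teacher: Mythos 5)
Your proposal is correct and follows essentially the intended route: the paper does not prove this statement itself but imports it from \cite[Thm.~1.7, Prop.~3.2]{carlsen2011algebraic}, and your argument — existence and uniqueness of $\eta$ from the ungraded universal property, surjectivity because a graded covariant representation is by definition surjective, and gradedness verified on the spanning monomials $\iota_{Q^{\otimes m}}(q)\iota_{P^{\otimes n}}(p)$ of $\mathcal{T}_{(m,n)}$ via $\eta\circ\iota_{Q^{\otimes m}}=T^m$, $\eta\circ\iota_{P^{\otimes n}}=S^n$ together with $T(Q)\subseteq B_1$, $S(P)\subseteq B_{-1}$, $\sigma(R)\subseteq B_0$ — is exactly the content of the cited result. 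No gaps.
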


We relate morphisms in the category of graded covariant representations to morphisms in the category of $\mathbb{Z}$-graded rings:



\begin{lemma}
Let $R$ be a ring and let $(P,Q,\psi)$ be an $R$-system. Suppose that $(S,T,\sigma,B)$ and $(S', T', \sigma', B')$ are two graded covariant representations of $(P,Q,\psi)$. If $$\phi \colon (S,T,\sigma,B) \to (S', T', \sigma', B')$$ is a morphism in the category $\mathcal{C}_{(P,Q,\psi)}$ (see the introduction), then $\phi \colon B \to B'$ is a $\mathbb{Z}$-graded ring homomorphism. 
\label{lem:rep_maps}
\end{lemma}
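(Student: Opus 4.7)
The plan is to exploit the surjectivity condition in Definition \ref{def:covariant_representation} to reduce the claim to checking $\phi$ on monomials in the generators $\sigma(R) \cup S(P) \cup T(Q)$. Since $(S,T,\sigma,B)$ is surjective, $B$ equals the additive closure of all finite products $x_1 x_2 \cdots x_k$ with each $x_j \in \sigma(R) \cup S(P) \cup T(Q)$. For such a product, let $m$ be the number of factors from $T(Q)$ and $n$ the number of factors from $S(P)$. Since the representation is graded, $\sigma(R) \subseteq B_0$, $T(Q) \subseteq B_1$ and $S(P) \subseteq B_{-1}$, so the product lies in $B_{m-n}$ by multiplicativity of the grading.

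First, I would introduce, for each $i \in \mathbb{Z}$, the additive subgroup $B_i^{\mathrm{mon}} \subseteq B_i$ spanned by such monomials whose signed count $m-n$ equals $i$. The observation above gives $B_i^{\mathrm{mon}} \subseteq B_i$, and the surjectivity gives $B = \sum_{i \in \mathbb{Z}} B_i^{\mathrm{mon}}$. Combining these with the directness of the decomposition $B = \bigoplus_{i \in \mathbb{Z}} B_i$ forces $B_i = B_i^{\mathrm{mon}}$ for every $i$. (If $b \in B_i$, write $b = \sum_j b_j$ with $b_j \in B_j^{\mathrm{mon}} \subseteq B_j$; uniqueness of the homogeneous decomposition then implies $b = b_i \in B_i^{\mathrm{mon}}$.)

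Second, I would use the defining relations $\phi \circ \sigma = \sigma'$, $\phi \circ T = T'$ and $\phi \circ S = S'$ together with the fact that $\phi$ is a ring homomorphism. Applying $\phi$ to a monomial $x_1 \cdots x_k$ in $B_i^{\mathrm{mon}}$ produces the product $\phi(x_1) \cdots \phi(x_k)$, whose factors now lie in $\sigma'(R) \cup S'(P) \cup T'(Q) \subseteq B'_0 \cup B'_{-1} \cup B'_1$, with exactly $m$ factors in $B'_1$ and $n$ factors in $B'_{-1}$. Hence $\phi(x_1 \cdots x_k) \in B'_{m-n} = B'_i$. By additivity this gives $\phi(B_i^{\mathrm{mon}}) \subseteq B'_i$, and combined with $B_i = B_i^{\mathrm{mon}}$ we conclude $\phi(B_i) \subseteq B'_i$, as required.

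There is no real obstacle here; the argument is essentially the standard fact that a ring homomorphism between graded rings that sends a set of homogeneous generators into the correct homogeneous components is automatically graded. The only subtlety is verifying that each $B_i$ is indeed spanned by monomials of correct signed degree, which is handled cleanly by the directness argument above.
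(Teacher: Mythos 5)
Your proof is correct. It differs from the paper's argument mainly in how the key spanning fact is obtained: the paper invokes the universal property of the Toeplitz ring (Theorem \ref{thm:universal}) to get a graded epimorphism $\eta \colon \mathcal{T}_{(P,Q,\psi)} \to B$ and then reads off from the explicit grading (\ref{eq:grading}) that $B_i = \Span_R \{ T(q)S(p) \mid q \in Q^{\otimes m}, p \in P^{\otimes n}, m-n=i \}$, i.e.\ it uses the normal form with all $T$-factors to the left of the $S$-factors, whereas you establish directly from the definition of a surjective covariant representation that $B$ is the additive span of arbitrary monomials in $\sigma(R) \cup S(P) \cup T(Q)$, and then use uniqueness of the homogeneous decomposition to conclude $B_i = B_i^{\mathrm{mon}}$. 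Your route is more elementary and self-contained (no appeal to the Toeplitz machinery and no need for a normal form), at the cost of being slightly longer; the paper's route is a two-line consequence of results already in place. The concluding step — pushing monomials through $\phi$ via $\phi\circ\sigma=\sigma'$, $\phi\circ T=T'$, $\phi\circ S=S'$ — is the same in both. One small point you implicitly handle correctly and could make explicit: a given element of $B$ may lie in more than one of the sets $\sigma(R)$, $S(P)$, $T(Q)$ (e.g.\ $0$), so the signed count $m-n$ is attached to a chosen presentation of each monomial rather than to the element itself; since you only need that each presented monomial lands in the corresponding $B_{m-n}$ and that $\phi$ of that presentation lands in $B'_{m-n}$, this causes no problem.
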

\begin{proof}
Applying Theorem \ref{thm:universal} to $(S,T,\sigma,B)$, it follows that $B_i = \eta(\mathcal{T}_i)$ and hence, by (\ref{eq:grading}), 
\begin{align*}
B_i = \Span_R \{ T(q) S(p) \mid q \in Q^{\otimes m}, p \in P^{\otimes n} \text{ where }m-n=i \},
\end{align*}
for every $i \in \mathbb{Z}$. Similarly,
$B_i' = \Span_R \{ T'(q) S'(p) \mid q \in Q^{\otimes m}, p \in P^{\otimes n} \text{ where }m-n=i \},$
for every $i \in \mathbb{Z}$. Since $\phi \circ T = T'$ and $\phi \circ S = S'$ it follows that $\phi(B_i) \subseteq B_i'$. Thus, $\phi$ is a $\mathbb{Z}$-graded ring homomorphism.
\end{proof}

The following corollary is straightforward to prove:
\begin{corollary}
Let $R$ be a ring and let $(P,Q,\psi)$ be an $R$-system. Suppose that $(S,T,\sigma,B) \cong_{\text{r}} (S', T', \sigma', B')$ are two isomorphic graded covariant representations of $(P,Q,\psi)$. Then, we have that $B \cong_{\text{gr}} B'$.
\label{cor:rep_iso}
\end{corollary}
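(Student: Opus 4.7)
The plan is to deduce the corollary as an almost immediate consequence of Lemma~\ref{lem:rep_maps}. An isomorphism in $\mathcal{C}_{(P,Q,\psi)}$ between $(S,T,\sigma,B)$ and $(S',T',\sigma',B')$ is by definition a ring isomorphism $\phi \colon B \to B'$ compatible with $S,T,\sigma$, whose inverse $\phi^{-1} \colon B' \to B$ satisfies $\phi^{-1} \circ S' = S$, $\phi^{-1} \circ T' = T$ and $\phi^{-1} \circ \sigma' = \sigma$, i.e.\ is again a morphism in $\mathcal{C}_{(P,Q,\psi)}$.

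First I would apply Lemma~\ref{lem:rep_maps} to $\phi$ to conclude that $\phi$ is a $\mathbb{Z}$-graded ring homomorphism, so $\phi(B_i) \subseteq B_i'$ for every $i \in \mathbb{Z}$. Then I would apply the same lemma to $\phi^{-1}$ (viewed as a morphism in $\mathcal{C}_{(P,Q,\psi)}$ in the opposite direction) to obtain $\phi^{-1}(B_i') \subseteq B_i$ for every $i \in \mathbb{Z}$.

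Combining these two inclusions yields $\phi(B_i) = B_i'$ for each $i \in \mathbb{Z}$, so $\phi$ restricts to an additive bijection between the homogeneous components of degree $i$. Since $\phi$ is already a ring isomorphism, this shows $\phi$ is a graded ring isomorphism, and hence $B \cong_{\mathrm{gr}} B'$.

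There is no real obstacle here; the only thing worth being slightly careful about is verifying that the inverse $\phi^{-1}$ genuinely lives in $\mathcal{C}_{(P,Q,\psi)}$, which is immediate from composing the defining identities $\phi \circ S = S'$, $\phi \circ T = T'$, $\phi \circ \sigma = \sigma'$ with $\phi^{-1}$ on the left. Everything else is a direct appeal to the preceding lemma.
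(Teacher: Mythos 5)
Your proof is correct and is exactly the intended argument: the paper states the corollary as a straightforward consequence of Lemma~\ref{lem:rep_maps}, applied to the isomorphism and its inverse, which is precisely what you do. Nothing to add.
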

\subsection{Adjointable operators, Condition (FS) and Cuntz-Pimsner representations}

Recall from the $C^*$-setting, that finite generation of the Hilbert module $E$ is equivalent to the ring of compact operators $B(E)=K(E)$ being unital. In the algebraic setting, the ring of compact operators $K(E)$ is replaced by $\mathcal{F}_P(Q)$ and $\mathcal{F}_Q(P)$ (see \cite[Def. 2.1]{carlsen2011algebraic}). We will later see that if $P,Q$ are finitely generated, then $\mathcal{F}_P(Q)$ and $\mathcal{F}_Q(P)$ are unital (see Proposition \ref{prop:fsprime_char}). For now, we recall the definition of these rings. A right $R$-module homomorphism $T \colon Q_R \to Q_R$ is called \emph{adjointable} if there exists a left $R$-module homomorphism $S \colon _R P \to _R P$ such that $\psi(p \otimes T(q)) = \psi(S(p) \otimes q)$ for all $q \in Q$ and $p \in P$. The set of adjointable homomorphisms is denoted by $\mathcal{L}_P(Q)$ and $\mathcal{L}_Q(P)$. Note that $\mathcal{L}_P(Q)$ and $\mathcal{L}_Q(P)$ are subrings of $\text{End}(Q_R)$ and $\text{End}(_R P)$ respectively.
Given fixed elements $q \in Q$ and $p \in P$, define $\theta_{q,p} \colon Q_R \to Q_R$ and $\theta_{p,q} \colon _R P \to  {}_R P$ by $\theta_{q,p}(x) = q \cdot \psi(p \otimes x)$ and $ \theta_{p,q}(y) =\psi(y \otimes q) \cdot p$ for $x \in Q$ and $y \in P$ respectively. The $R$-linear span of the homomorphisms $\{ \theta_{q,p} \mid q \in Q, p \in P \}$ is denoted by $\mathcal{F}_P(Q)$. Similarly, the $R$-linear span of $\{ \theta_{p,q} \mid q \in Q, p \in P \}$ is denoted by $\mathcal{F}_Q(P)$. It can be proved that $\mathcal{F}_P(Q)$ and $\mathcal{F}_Q(P)$ are two-sided ideals of $\mathcal{L}_P(Q)$ and $\mathcal{L}_Q(P)$ respectively (see \cite[Lem. 2.3]{carlsen2011algebraic}).
The following technical condition was introduced by Carlsen and Ortega:
\begin{definition}
(\cite[Def. 3.4]{carlsen2011algebraic})
Let $R$ be a ring. An $R$-system $(P,Q,\psi)$ is said to satisfy \emph{Condition (FS)}  if for all finite sets $\{ q_1, q_2, \dots, q_n \} \subseteq Q$ and $\{ p_1, p_2, \dots, p_m \} \subseteq P$ there exist some $\Theta \in \mathcal{F}_P(Q)$ and $\Phi \in \mathcal{F}_Q(P)$ such that $\Theta(q_i) = q_i$ and $\Phi(p_j)=p_j$ for all $1 \leq i \leq n$ and $1 \leq j \leq m$.
\label{def:cond_fs}
\end{definition}

Note that we have the following inclusion of rings:
\begin{align}
\mathcal{F}_P(Q) & \subseteq \mathcal{L}_P(Q) \subseteq \text{End}(Q_R), \nonumber \\ 
\mathcal{F}_Q(P) & \subseteq \mathcal{L}_Q(P) \subseteq \text{End}(_R P).
\label{eq:inc1}
\end{align}

Carlsen and Ortega (see \cite[Def. 3.10]{carlsen2011algebraic}) defined maps $\Delta \colon R \to \mathcal{L}_P(Q)$ and $\Gamma \colon R \to \mathcal{L}_Q(P)$ by $\Delta(r)(q) = r q$ and $\Gamma(r)(p) = p r$ for all $r \in R, q \in Q, p \in P$. 

In the $C^*$-setting, it turns out that there are always injective morphisms $\pi_n \colon K(E^{\otimes n}) \to \mathcal{T}_E$ for each $n > 0$. In the algebraic setting, Carlsen and Ortega obtained something similar under the assumption that the system satisfies Condition (FS). Another way to put it is that if the $R$-system satisfies Condition (FS), then there are induced representations of $\mathcal{F}_P(Q)$ and $\mathcal{F}_Q(P)$. Recall that the opposite ring $R^{\text{op}}$ of a ring $R$ has the same additive structure but with a new multiplication defined by $a \star b = ba$ for all $a,b \in R$. 
\begin{proposition}
(\cite[Prop. 3.11]{carlsen2011algebraic})
Let $R$ be a ring, let $(P,Q,\psi)$ be an $R$-system satisfying Condition (FS) and let $(S,T,\sigma,B)$ be a covariant representation of $(P,Q,\psi)$. Then there exist unique ring homomorphisms $\pi_{T, S} \colon \mathcal{F}_P(Q) \to B$  and $\chi_{T, S} \colon \mathcal{F}_Q(P) \to B^{\text{op}}$ such that $\pi_{T, S}(\theta_{q,p}) = T(q) S(p)$ and $\chi_{T,S}(\theta_{p,q}) = S(p) \star T(q)$ for all $q \in Q, p \in P$. The maps satisfy the following equations for all $\Theta \in \mathcal{F}_P(Q)$ and $\Phi \in \mathcal{F}_Q(P)$:
\begin{align}
\pi_{T, S}(\Delta(r) \Theta) = \sigma(r) \pi_{T, S}(\Theta), \qquad & \pi_{T, S}(\Theta \Delta(r)) = \pi_{T, S}(\Theta) \sigma(r) \nonumber \\
\chi_{T, S}(\Gamma(r)\Phi) = \sigma(r) \star \chi_{T, S}(\Phi) , \qquad  & \chi_{T, S}(\Phi \Gamma(r)) = \chi_{T, S}(\Phi) \star \sigma(r) \nonumber \\
\pi_{T, S}(\Theta)T(q) = T(\Theta(q)), \qquad  & \chi_{T, S}(\Phi) \star S(p) = S(\Phi(p)). \label{eq:2.9a} 
\end{align}
Moreover, $\pi_{T,S}(\mathcal{F}_P(Q)) = \chi_{T, S}(\mathcal{F}_Q(P)) = \Span_R \{ T(q) S(p) \mid q \in Q, p \in P \} \subseteq B$.
If $\sigma$ is injective, then the maps $\pi_{T,S}$ and $\chi_{T,S}$ are also injective.
\label{prop:pi}
\end{proposition}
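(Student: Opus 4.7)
The plan is to define $\pi_{T,S}$ by $\theta_{q,p} \mapsto T(q)S(p)$ on generators and extend additively (recall that $\mathcal{F}_P(Q)$ is the $R$-linear span of the $\theta_{q,p}$, but $r \cdot \theta_{q,p} = \theta_{rq,p}$, so this span is already just the additive one). Uniqueness and the image description are then immediate, and the map $\chi_{T,S}$ is handled by a formally dual argument carried out in $B^{\mathrm{op}}$. The hard part is well-definedness, which is precisely where Condition (FS) enters: one must show that if $\Theta = \sum_i \theta_{q_i,p_i}$ vanishes as an operator on $Q$, then $\sum_i T(q_i)S(p_i) = 0$ in $B$.

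For well-definedness I would first record the ``easy half'': using Definition \ref{def:covariant_representation}(d),(e), for any $x \in Q$,
\begin{equation*}
\Bigl(\sum_i T(q_i)S(p_i)\Bigr) T(x) \;=\; \sum_i T(q_i)\sigma(\psi(p_i \otimes x)) \;=\; \sum_i T\bigl(q_i \cdot \psi(p_i \otimes x)\bigr) \;=\; T(\Theta(x)) \;=\; 0.
\end{equation*}
The trick to remove the trailing $T(x)$ is to invoke Condition (FS) on the finite set $\{p_i\}$: pick $\Phi = \sum_l \theta_{p_l'',q_l''} \in \mathcal{F}_Q(P)$ with $\Phi(p_i) = p_i$, i.e., $p_i = \sum_l \psi(p_i \otimes q_l'') \cdot p_l''$. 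Applying $S$ yields $S(p_i) = \sum_l S(p_i)\,T(q_l'')\,S(p_l'')$, so
\begin{equation*}
\sum_i T(q_i)S(p_i) \;=\; \sum_l \Bigl(\sum_i T(q_i)S(p_i)\,T(q_l'')\Bigr) S(p_l'') \;=\; 0
\end{equation*}
by the easy half applied with $x = q_l''$.

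With well-definedness in hand, the remaining assertions are direct computations on generators. The ring homomorphism property rests on the identity $\theta_{q,p} \circ \theta_{q',p'} = \theta_{q \cdot \psi(p \otimes q'),\, p'}$ in $\mathcal{F}_P(Q)$, after which the axiom $\sigma(\psi(p \otimes q')) = S(p)T(q')$ gives $\pi_{T,S}(\theta_{q,p}\theta_{q',p'}) = T(q)S(p)T(q')S(p')$. Each equation of (\ref{eq:2.9a}) reduces on a single $\theta_{q,p}$ to a short manipulation using that $S,T$ are bimodule maps; for instance, $\pi_{T,S}(\Theta)T(q) = T(\Theta(q))$ for $\Theta = \theta_{q',p'}$ unfolds as $T(q')\sigma(\psi(p' \otimes q)) = T(q' \cdot \psi(p' \otimes q))$. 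The image equality follows because $\pi_{T,S}$ is surjective onto the additive span of the $T(q)S(p)$, which already absorbs $R$-scalars by $\sigma(r)T(q)S(p) = T(rq)S(p)$.

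For injectivity when $\sigma$ is injective, I would show that $\pi_{T,S}(\Theta) = 0$ forces $\Theta(x) = 0$ for every $x \in Q$. For any $y \in P$ one has
\begin{equation*}
\sigma(\psi(y \otimes \Theta(x))) \;=\; \sum_i \sigma(\psi(y \otimes q_i))\,\sigma(\psi(p_i \otimes x)) \;=\; S(y)\,\pi_{T,S}(\Theta)\,T(x) \;=\; 0,
\end{equation*}
so $\psi(y \otimes \Theta(x)) = 0$ for all $y \in P$ by injectivity of $\sigma$. Now applying Condition (FS) to the singleton $\{\Theta(x)\}$ produces $\Theta'' = \sum_k \theta_{u_k,v_k}$ with $\Theta(x) = \Theta''(\Theta(x)) = \sum_k u_k \cdot \psi(v_k \otimes \Theta(x)) = 0$. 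Hence $\Theta = 0$ in $\mathcal{F}_P(Q)$. The main obstacle throughout is organizing the applications of Condition (FS): one pass on the $p_i$'s for well-definedness, and a second pass on $\Theta(x)$ for injectivity; once the right ``identity-like'' elements are introduced, the rest is bookkeeping on generators.
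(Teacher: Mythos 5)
Your proof is correct, and it is essentially the standard argument for this result — note that the paper itself does not prove Proposition \ref{prop:pi} but imports it from Carlsen--Ortega \cite[Prop. 3.11]{carlsen2011algebraic}, where the proof runs along the same lines: define the maps on the generators $\theta_{q,p}$, use Condition (FS) to produce the ``local unit'' elements that give well-definedness, and reuse the same trick (plus injectivity of $\sigma$) for injectivity. Your handling of the $\chi_{T,S}$ side as the formal dual (applying (FS) to the relevant finite subsets of $Q$ rather than $P$) and your verification of (\ref{eq:2.9a}) on generators are exactly what is needed, so there is nothing to correct.
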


\begin{remark}We make two remarks regarding Proposition \ref{prop:pi}.
\begin{enumerate}[(a)]
\begin{item}
The equation $\chi_{T, S}(\Phi) \star S(p) = S(p) \chi_{T, S}(\Phi) = S(\Phi(p))$ is misprinted in \cite[Prop. 3.11]{carlsen2011algebraic}.
\end{item}
\begin{item}
Following Carlsen and Ortega, let $\pi$ denote the map $\bigcup_m \mathcal{F}_{P^{\otimes m}}(Q^{\otimes m}) \to \mathcal{T}_{(P,Q,\psi)}$ .
\end{item}
\end{enumerate}

\end{remark}

We now recall the definition of the Cuntz-Pimsner invariant representations. If the $R$-system $(P,Q,\psi)$ satisfies Condition (FS), then the Cuntz-Pimsner invariant representations exhaust all injective, surjective graded covariant representations of $(P,Q,\psi)$ up to isomorphism in $\mathcal{C}_{(P,Q,\psi)}$ (see \cite[Rem. 3.30]{carlsen2011algebraic}).

\begin{definition}(\cite[Def. 3.15, Def. 3.16]{carlsen2011algebraic})
Let $R$ be a ring and let $(P,Q,\psi)$ be an $R$-system satisfying Condition (FS). Let $J$ be an ideal of $R$. If $J \subseteq \Delta^{-1}(\mathcal{F}_P(Q))$, then the ideal $J$ is called \emph{$\psi$-compatible}.  If $\ker \Delta \cap J = \{0 \}$, then $J$ is called \emph{faithful}.
For a $\psi$-compatible ideal $J \subseteq R$, let $\mathcal{T}(J)$ be the ideal of $\mathcal{T}_{(P,Q,\psi)}$ generated by the set $\{ \iota_R(x) - \pi(\Delta(x)) \mid x \in J \}$. 
The \emph{Cuntz-Pimsner ring relative to $J$} is defined as the quotient ring $\mathcal{O}_{(P,Q,\psi)} = \mathcal{T}_{(P,Q,\psi)} / \mathcal{T}(J)$. Let $\rho \colon \mathcal{T}_{(P,Q,\psi)} \to \mathcal{O}_{(P,Q,\psi)}$ be the quotient map. Let $\iota_Q^J = \rho \circ \iota_Q$, $\iota_P^J = \rho \circ \iota_P$ and $\iota_R^J = \rho \circ \iota_R$. The covariant representation $(\iota_Q^J, \iota_P^J, \iota_R^J, \mathcal{O}_{(P,Q,\psi)}(J))$ is called the \emph{Cuntz-Pimsner representation relative to $J$}.
\end{definition}

A covariant representation $(S,T,\sigma, B)$ is called \emph{invariant relative to $J$} if $\pi_{T,S}(\Delta(x)) = \sigma(x)$ holds in $B$ for each $x \in J$. The relative Cuntz-Pimsner representation $(\iota_Q^J, \iota_P^J, \iota_R^J, \mathcal{O}_{(P,Q,\psi)}(J))$ is invariant relative to $J$ and satisfies a universal property among invariant representations (see \cite[Thm. 3.18]{carlsen2011algebraic}). Finally, we recall the definition of the Cuntz-Pismner ring:

\begin{definition}(\cite[Def. 5.1]{carlsen2011algebraic})
Let $R$ be a ring and let $(P,Q,\psi)$ be an $R$-system. Suppose that there exists a unique maximal $\psi$-compatible, faithful ideal $J$ of $R$. The \emph{Cuntz-Pimsner ring} is defined as $\mathcal{O}_{(P,Q,\psi)} = \mathcal{O}_{(P,Q,\psi)}(J)=\mathcal{T}_{(P,Q,\psi)}/\mathcal{T}(J)$ and the \emph{Cuntz-Pimsner representation} $(\iota_Q^{CP}, \iota_P^{CP}, \iota_R^{CP}, \mathcal{O}_{(P,Q,\psi)})$ is defined to be $(\iota_Q^J, \iota_P^J, \iota_R^J, \mathcal{O}_{(P,Q,\psi)}(J))$.
\label{def:cp_ring}
\end{definition}

\subsection{Leavitt path algebras}
\label{sec:lpa}
The Leavitt path algebra associated to a directed graph was introduced by Ara, Moreno and Pardo \cite{ara2007nonstable} and by Abrams and Aranda Pino \cite{abrams2005leavitt}.
For a thorough account of the theory of Leavitt path algebras, we refer the reader to the monograph by Abrams, Ara, and Siles Molina \cite{abrams2017leavitt}.
We now recall the realization of Leavitt path algebras as Cuntz-Pimsner rings given by Carlsen and Ortega (see \cite[Expl. 1.10, Expl. 5.9]{carlsen2011algebraic}). They only considered Leavitt path algebras with coefficients in a commutative unital ring, but their construction also works for non-commutative unital rings. Let $K$ be a unital ring that will serve as the coefficient ring. Let $E=(E^0, E^1, s, r)$ be a directed graph consisting of a vertex set $E^0$, an edge set $E^1$ and maps $s \colon E^1 \to E^0$ and $r \colon E^1 \to E^0$ specifying the source vertex $s(f)$ and range vertex $r(f)$ for each edge $f \in E^1$.  For vertices $u, v \in E^0$, let $\delta_{u,v}=1$ if $u=v$ and $\delta_{u,v} = 0$ if $u \ne v$. Moreover, let $\{ \eta_v \mid v \in E^0 \}$ be a copy of the set $E^0$ and similarly let $\{ \eta_f \mid f \in E^1 \}$ and $\{ \eta_{f^*} \mid f \in E^1 \}$ be copies of the set $E^1$. 

\begin{enumerate}[(a)]
\begin{item}
Put $R := \bigoplus_{v \in E^0} K \eta_v$. Define a multiplication on $R$ by $K$-linearly extending the rules $\eta_u \eta_v = \delta_{u,v} \eta_v$ for all $u, v \in E^0$.
\end{item}
\begin{item}
Put $Q := \bigoplus_{f \in E^1} K \eta_f$. Let $R$ act on the left of $Q$ by $K$-linearly extending the rules $\eta_v \cdot \eta_f = \delta_{v, s(f)} \eta_f$ for all $v \in E^0, f \in E^1$. Let $R$ act on the right of $Q$ by $K$-linearly extending the rules $\eta_f \cdot \eta_v = \delta_{v, r(f)} \eta_f$. 
\end{item}
\begin{item}
Put $P := \bigoplus_{f \in E^1} K \eta_{f^*}$. Let $R$ act on the left of $P$ by $K$-linearly extending the rules $\eta_v \cdot \eta_{f^*} = \delta_{v, r(f)} \eta_{f^*}$ for all $v \in E^0, f \in E^1$. Let $R$ act on the right of $P$ by $K$-linearly extending the rules $\eta_{f^*} \cdot \eta_v = \delta_{v, s(f)} \eta_{f^*}$ for all $v \in E^0, f \in E^1$.
\end{item}
\begin{item}
Define an $R$-bimodule homomorphism $\psi \colon P \otimes_R Q \to R$ by $\eta_{f^*} \otimes \eta_{f'} \mapsto \delta_{f, f'} \eta_{r(f)}$ for all $f, f' \in E^1$.
\end{item}
\end{enumerate}
We will refer to the above $R$-system $(P,Q,\psi)$ as the \emph{standard Leavitt path system} associated to the directed graph $E$ (with coefficients in $K$). Carlsen and Ortega proved (see \cite[Expl. 5.8]{carlsen2011algebraic}) that $(P,Q,\psi)$ satisfies Condition (FS), that the Cuntz-Pimsner ring is well-defined and that $\mathcal{O}_{(P,Q,\psi)} \cong_{\text{gr}} L_K(E)$. The covariant representation $(\iota_{Q}^{CP}, \iota_P^{CP}, \iota_R^{CP}, \mathcal{O}_{(P,Q,\psi)})$ is called the \emph{standard Leavitt path algebra covariant representation}.  Clark, Fletcher, Hazrat and Li also obtained these facts using more general methods (see  \cite[Expl. 3.6]{2018arXiv180810114O}).

\subsection{Corner skew Laurent polynomial rings}
\label{sec:corner}
The general construction of fractional skew monoid rings was introduced by Ara, Gonzalez-Barroso, Goodearl and Pardo in \cite{ara2004fractional} as algebraic analogues of certain $C^*$-algebras introduced by Paschke \cite{paschke1980crossed}. Here, we consider the special case of a fractional skew monoid ring by a corner isomorphism which is also called a \emph{corner skew Laurent polynomial ring}.
Let $R$ be a unital ring and let $\alpha \colon R \to eRe$ be a corner ring isomorphism where $e$ is an idempotent of $R$. The corner skew Laurent polynomial ring $R[t_{+}, t_{-}; \alpha]$ is defined to be the universal unital ring satisfying the following conditions:
\begin{enumerate}[(a)]
\begin{item}
There is a unital ring homomorphism $i \colon R \to R[t_{+}, t_{-}; \alpha]$;
\end{item}
\begin{item}
$R[t_{+}, t_{-}; \alpha]$ is the $R$-algebra  satisfying the following equations for every $r \in R$:
\begin{equation*}
t_{-}t_{+} = 1, \qquad t_{+} t_{-} = i(e), \qquad i(r)t_{-} = t_{-} i(\alpha(r)), \qquad t_{+}i(r) =i(\alpha(r)) t_{+}.
\end{equation*}
\end{item}
\end{enumerate}
Moreover, $R[t_{+}, t_{-}; \alpha]$ is $\mathbb{Z}$-graded with $A_0= R$, $A_i = R t_{+}^i$ for $i < 0$ and $A_i = t_{-}^i R$ for $i > 0$. Note that $t_{-} \in A_1$ and $t_{+} \in A_{-1}$! Carlsen and Ortega \cite[Expl. 5.7]{carlsen2011algebraic} proved that the corner skew Laurent polynomial ring $R[t_{+}, t_{-}; \alpha]$ can be realized as a Cuntz-Pimsner ring.

\section{Nearly epsilon-strongly $\mathbb{Z}$-graded rings as Cuntz-Pimsner rings}
\label{sec:necessary}

In this section, we will see that a recent result by Clark, Fletcher, Hazrat and Li \cite{2018arXiv180810114O} will allow us to derive necessary conditions for certain Cuntz-Pimsner rings to be nearly epsilon-strongly $\mathbb{Z}$-graded. Inspired by Exel we make the following definition:

\begin{definition}
(cf. \cite[Def. 4.9]{exel2017partial})
Let $A=\bigoplus_{i \in \mathbb{Z}} A_i$ be a $\mathbb{Z}$-graded ring. If $A_n = (A_1)^n$ and $A_{-n} = (A_{-1})^n$ for $n > 0$, then $A$ is called \emph{semi-saturated}. 
\end{definition}

We show that the Toeplitz ring and any graded covariant representation is semi-saturated.

\begin{proposition}
Let $R$ be a ring and let $(P, Q, \psi)$ be an $R$-system. 
\begin{enumerate}[(a)]
\begin{item}
The Toeplitz ring $\mathcal{T}_{(P,Q,\psi)}=\bigoplus_{i \in \mathbb{Z}} \mathcal{T}_i$ is semi-saturated.
\end{item}
\begin{item}
Let $(S,T,\sigma, B)$ be any graded covariant representation of $(P,Q,\psi)$. Then $B=\bigoplus_{i \in \mathbb{Z}} B_i$ is semi-saturated.
\end{item}
\end{enumerate}
\label{prop:toeplitz_exponent}
\end{proposition}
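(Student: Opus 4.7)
The plan is to prove part (a) directly from the explicit bidegree decomposition of $\mathcal{T}_i$ given in (\ref{eq:grading}), and then to deduce part (b) from (a) using the universal property of the Toeplitz ring (Theorem \ref{thm:universal}).

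For part (a), fix $n > 0$. By (\ref{eq:grading}) it is enough to show that every generator of $\mathcal{T}_{(m,n')}$ with $m - n' = n$ (so in particular $m \geq n \geq 1$) lies in $(\mathcal{T}_1)^n$. Writing $T = \iota_Q$ and $S = \iota_P$, such a generator has the form $T(q_1) T(q_2) \cdots T(q_m) S(p_1) \cdots S(p_{n'})$. I would split the product as
$$T(q_1) \cdot T(q_2) \cdots T(q_{n-1}) \cdot \bigl( T(q_n) T(q_{n+1}) \cdots T(q_m) S(p_1) \cdots S(p_{n'}) \bigr),$$
where each of the first $n-1$ factors lies in $\mathcal{T}_{(1,0)} \subseteq \mathcal{T}_1$, and the bracketed factor lies in $\mathcal{T}_{(m-n+1,\,n')} \subseteq \mathcal{T}_1$, since $(m-n+1) - n' = 1$. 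A general element of $\mathcal{T}_{(m,n')}$ is an $R$-linear combination of such monomials; an $R$-scalar in front is absorbed into the leftmost factor via the covariance relation $\iota_R(r)\,T(q_1) = T(r \cdot q_1)$, so that $T(r \cdot q_1) \in \mathcal{T}_1$ and the argument is unaffected. This gives $\mathcal{T}_n \subseteq (\mathcal{T}_1)^n$; the reverse inclusion is immediate from $\mathcal{T}_1 \cdot \mathcal{T}_1 \cdots \mathcal{T}_1 \subseteq \mathcal{T}_n$. The identity $\mathcal{T}_{-n} = (\mathcal{T}_{-1})^n$ follows symmetrically by peeling $n-1$ factors $S(p_j)$ off the \emph{right} of each generator and checking that the remaining left-hand factor, having bidegree $(m,\,m+1)$, lies in $\mathcal{T}_{-1}$.

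For part (b), Theorem \ref{thm:universal} yields a surjective $\mathbb{Z}$-graded ring homomorphism $\eta \colon \mathcal{T}_{(P,Q,\psi)} \twoheadrightarrow B$ with $\eta \circ \iota_Q = T$ and $\eta \circ \iota_P = S$, so that $B_i = \eta(\mathcal{T}_i)$ for every $i \in \mathbb{Z}$ (as already used in the proof of Lemma \ref{lem:rep_maps}). Applying $\eta$ to the identity of (a) gives $B_n = \eta(\mathcal{T}_n) = \eta\bigl((\mathcal{T}_1)^n\bigr) = \bigl(\eta(\mathcal{T}_1)\bigr)^n = (B_1)^n$, and analogously $B_{-n} = (B_{-1})^n$.

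There is no genuine conceptual obstacle: the whole argument is bookkeeping with the bidegrees $(m,n')$ and a single appeal to covariance to dispose of $R$-coefficients. The only point requiring mild care is verifying that after peeling off $n-1$ factors from a monomial, the remaining bracket still has bidegree summing to $\pm 1$, which is the observation $(m-n+1) - n' = 1$ in the positive case and its mirror in the negative case.
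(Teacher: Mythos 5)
Your proof is correct and takes essentially the same route as the paper: peel the first (resp.\ last) $n-1$ degree-one factors off each monomial generator, observe that the remaining block has bidegree summing to $\pm 1$ and hence lies in $\mathcal{T}_{\pm 1}$, and obtain part (b) by pushing $\mathcal{T}_{\pm n} = (\mathcal{T}_{\pm 1})^n$ through the graded epimorphism of Theorem \ref{thm:universal}. Your explicit absorption of the $R$-coefficients of the span via the covariance relation is a minor point of extra care that the paper's proof leaves implicit.
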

\begin{proof}
(a): Take an arbitrary integer $t > 0$. It follows from the $\mathbb{Z}$-grading that $(\mathcal{T}_1)^t \subseteq \mathcal{T}_t$. We prove the reverse inclusion. Let $\iota_{Q^{\otimes m}}(q) \iota_{P^{\otimes n}}(p) \in \mathcal{T}_t$ where $q \in Q^{\otimes m}, p \in P^{\otimes n}$ and $m-n=t$. We need to show that $\iota_{Q^{\otimes m}}(q) \iota_{P^{\otimes n}}(p)  \in (\mathcal{T}_1)^t$. 
Suppose $q = f_1 \otimes f_2 \otimes \dots \otimes f_{n+t}$ and $p = g_1 \otimes g_2 \otimes \dots \otimes g_{n}$.
Then, 
\begin{align*}
\iota_{Q^{\otimes m}}(q) \iota_{P^{\otimes n}}(p) = \iota_Q(f_1) \iota_Q(f_2) \dots \iota_Q(f_{t-1}) \iota_{Q^{\otimes (n+1)}}(f_{t} \otimes f_{t+1} \otimes f_t \otimes \dots \otimes f_{n+t}) \iota_{P^{\otimes n}}(p),
\end{align*}
is contained in  $ (\mathcal{T}_1)^t$. Hence, $\mathcal{T}_t = (\mathcal{T}_1)^t$ for $t > 0$. A similar argument shows that $\mathcal{T}_{-t} = (\mathcal{T}_{-1})^{t}$ for $t > 0$.

(b): By Theorem \ref{thm:universal}, there is a $\mathbb{Z}$-graded ring epimorphism $\eta \colon \mathcal{T}_{(P,Q,\psi)} \to B$. Hence, $B_n = \eta(\mathcal{T}_n) = \eta((\mathcal{T}_1)^n) = \eta(\mathcal{T}_1)^n = (B_1)^n$ for any $n > 0$. Similarly, $B_{-n} = (B_{-1})^n$ for any $n > 0$.
\end{proof}

If $M$ is a left $R$-module, then the left annihilator $\Ann_{R}(M) = \{ r \in R \mid r \cdot m = 0 \enspace \forall m \in M \}$ is an ideal of $R$.  If $J$ is an ideal of $R$, then $J^\bot = \{ r \in R \mid rx = xr = 0 \enspace \forall x \in J \}.$ The following result was recently obtained by Clark, Fletcher, Hazrat and Li. Their formulation of the theorem is weaker but they in fact prove the stronger statement below.


\begin{theorem}(\cite[Cor. 3.2]{2018arXiv180810114O})
Let $A=\bigoplus_{i \in \mathbb{Z}} A_i$ be a $\mathbb{Z}$-graded ring satisfying the following assertions:
\begin{enumerate}[(a)]
\begin{item}
$A$ is semi-saturated;
\end{item}
\begin{item}
For $\{a_1, a_2, \dots, a_n \} \subseteq A_1$ there is $r \in A_1 A_{-1}$ such that $r a_l = a_l$ for each $1 \leq l \leq n$, and 
for $\{ b_1, b_2, \dots, b_m \} \subseteq A_{-1}$ there is $s \in A_1 A_{-1}$ such that $b_l s = b_l$ for each $1 \leq l \leq m$;
\end{item}
\begin{item}
$\Ann_{A_0}(A_1) \cap  \Ann_{A_0}(A_1)^\bot = \{ 0 \}$.
\end{item}
\end{enumerate}
 Let $\psi \colon A_{-1} \otimes A_1 \to A_0$ be defined by $\psi(a' \otimes a) = a' a$. Then the $A_0$-system $(A_{-1}, A_1, \psi)$ satisfies Condition (FS). Let $i_{A_{-1}} \colon A_{-1} \to A$, $i_{A_1} \colon A_1 \to A$, $ i_{A_0} \colon A_0 \to A$ denote the inclusion maps and let $J = A_1 A_{-1}$. Then $(i_{A_{-1}}, i_{A_{1}}, i_{A_0}, A)$ is a surjective covariant representation of $(A_{-1}, A_1, \psi)$ and,
\begin{equation}
(i_{A_{-1}}, i_{A_1}, i_{A_0}, A) \cong_{\text{r}} (\iota_{A_{-1}}^J, \iota_{A_1}^J, \iota_{A_0}^J, \mathcal{O}_{(A_{-1},A_1, \psi')}(J)).
\label{eq:rep_iso}
\end{equation}
Furthermore, $J$ is faithfully maximal, hence, $$(\iota_{A_{-1}}^J, \iota_{A_1}^J, \iota_{A_0}^J, \mathcal{O}_{(A_{-1},A_1, \psi')}(J)) = (\iota_{A_{-1}}^{CP}, \iota_{A_1}^{CP}, \iota_{A_0}^{CP}, \mathcal{O}_{(A_{-1},A_1, \psi')}).$$
\noindent
In particular, we have that $A \cong_{\text{gr}} \mathcal{O}_{(A_{-1}, A_1, \psi')}$. 

\label{thm:clark}
\end{theorem}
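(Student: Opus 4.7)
The strategy is to promote the inclusion maps $(i_{A_{-1}}, i_{A_1}, i_{A_0}, A)$ to a covariant representation of the natural $A_0$-system $(A_{-1}, A_1, \psi)$, verify Condition (FS) using the approximate local units from (b), show invariance relative to $J = A_1 A_{-1}$, and then apply the universal property of the relative Cuntz-Pimsner representation to produce a graded surjection $\eta \colon \mathcal{O}_{(A_{-1}, A_1, \psi)}(J) \to A$. The final task will be to argue that $\eta$ is an isomorphism and that $J$ is faithfully maximal, so that $\mathcal{O}(J) = \mathcal{O}_{(A_{-1}, A_1, \psi)}$ in the sense of Definition \ref{def:cp_ring}.

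First I would verify that $\psi(a' \otimes a) = a'a$ is a well-defined $A_0$-bimodule homomorphism $A_{-1} \otimes_{A_0} A_1 \to A_0$; this, together with the grading, makes $(A_{-1}, A_1, \psi)$ an $A_0$-system. The covariance axioms (b)--(e) of Definition \ref{def:covariant_representation} for $(i_{A_{-1}}, i_{A_1}, i_{A_0}, A)$ reduce to associativity in $A$. Surjectivity is immediate from condition (a): semi-saturation forces the set $A_0 \cup A_1 \cup A_{-1}$ to generate every homogeneous component $A_n = (A_1)^n$ and $A_{-n} = (A_{-1})^n$, and hence all of $A$.

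Next, I would establish Condition (FS) from (b) as follows: for a finite set $\{q_1, \ldots, q_n\} \subseteq A_1 = Q$, choose $r = \sum_k c_k d_k \in A_1 A_{-1}$ with $r q_i = q_i$, and put $\Theta := \sum_k \theta_{c_k, d_k} \in \mathcal{F}_P(Q)$; then $\Theta(q_i) = \sum_k c_k \psi(d_k \otimes q_i) = r q_i = q_i$, as required. A symmetric construction produces $\Phi \in \mathcal{F}_Q(P)$. The same computation shows that $J = A_1 A_{-1}$ is $\psi$-compatible, since $\Delta(x) = \sum_k \theta_{c_k, d_k} \in \mathcal{F}_P(Q)$ for $x = \sum_k c_k d_k \in J$, and by Proposition \ref{prop:pi} the representation is invariant relative to $J$: $\pi_{T,S}(\Delta(x)) = \sum_k c_k d_k = x = \sigma(x)$. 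The universal property of the relative Cuntz-Pimsner representation (\cite[Thm.\ 3.18]{carlsen2011algebraic}) then furnishes the desired surjective graded ring homomorphism $\eta$.

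The main obstacle will be showing that $\eta$ is injective and that $J$ is faithfully maximal. Faithfulness of $J$ amounts to $\Ann_{A_0}(A_1) \cap A_1 A_{-1} = \{0\}$, since $\ker\Delta = \Ann_{A_0}(A_1)$. I expect the argument to go as follows: given $y = \sum_k c_k d_k \in \Ann_{A_0}(A_1) \cap A_1 A_{-1}$, apply the right-unit half of (b) to $\{d_k\}$ to obtain $s = \sum_l e_l f_l \in A_1 A_{-1}$ with $d_k s = d_k$ for every $k$, so that $y = y s = \sum_l (y e_l) f_l = 0$, where the last equality uses $y e_l = 0$ because $e_l \in A_1$. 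The role of condition (c) is then to promote $J$ to the \emph{maximal} faithful $\psi$-compatible ideal (this mirrors the construction of Katsura's ideal in the $C^*$-setting). Finally, since $(i_{A_{-1}}, i_{A_1}, i_{A_0}, A)$ is an injective (because $i_{A_0}$ is), surjective, graded, invariant covariant representation relative to $J$, the Carlsen-Ortega classification \cite[Rem.\ 3.30]{carlsen2011algebraic} of injective surjective graded covariant representations identifies it with $\mathcal{O}(J)$ and forces $\eta$ to be a graded isomorphism, giving $A \cong_{\text{gr}} \mathcal{O}_{(A_{-1}, A_1, \psi)}$.
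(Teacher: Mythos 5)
Most of your outline is sound and runs along the same track as the paper's proof (which itself defers the technical core to the proof of \cite[Thm.~3.1]{2018arXiv180810114O}): your direct verifications of Condition (FS) from hypothesis (b), of $\psi$-compatibility of $J=A_1A_{-1}$, of invariance of $(i_{A_{-1}},i_{A_1},i_{A_0},A)$ relative to $J$ via Proposition \ref{prop:pi}, and of faithfulness of $J$ (the $y=ys=0$ computation) are all correct, and are more self-contained than the paper, which simply cites Clark--Fletcher--Hazrat--Li for these facts. However, there is a genuine gap at exactly the point the theorem becomes non-trivial: you never prove that $J$ is the \emph{maximal} faithful $\psi$-compatible ideal. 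The sentence ``the role of condition (c) is then to promote $J$ to the maximal faithful $\psi$-compatible ideal'' is a statement of intent, not an argument; indeed, hypothesis (c), $\Ann_{A_0}(A_1)\cap\Ann_{A_0}(A_1)^\bot=\{0\}$, is never actually used anywhere in your proposal. Without maximality you cannot conclude the displayed identity $\mathcal{O}_{(A_{-1},A_1,\psi')}(J)=\mathcal{O}_{(A_{-1},A_1,\psi')}$, since Definition \ref{def:cp_ring} requires the unique maximal faithful $\psi$-compatible ideal, so the ``faithfully maximal'' clause of the statement is simply missing.

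The second weak point is the endgame. Invariance relative to $J$ only yields, via \cite[Thm.~3.18]{carlsen2011algebraic}, a surjection $\eta\colon\mathcal{O}(J)\to A$; it does not by itself identify $A$ with $\mathcal{O}(J)$. The classification you cite (\cite[Rem.~3.30]{carlsen2011algebraic}) tells you that the injective, surjective, graded representation $(i_{A_{-1}},i_{A_1},i_{A_0},A)$ is isomorphic to $\mathcal{O}(J'')$ for \emph{some} relative ideal $J''$, and a priori $J''\supsetneq J$ is possible, in which case $\eta$ would have a nonzero kernel --- every representation invariant relative to $J''$ is also invariant relative to $J$. To close this you must either (i) show that the covariance ideal $\{x\in A_0 \mid i_{A_0}(x)=\pi_{i_{A_1},i_{A_{-1}}}(\Delta(x))\}$ is exactly $J$ (this is true, because $\pi(\Delta(x))\in\Span_{A_0}\{qp \mid q\in A_1,\,p\in A_{-1}\}\subseteq A_1A_{-1}$, but you do not say it), or (ii) argue as the paper does, applying the graded uniqueness theorem \cite[Cor.~5.4]{carlsen2011algebraic} to the graded map $\eta$ (graded by Lemma \ref{lem:rep_maps}), which is injective on $\iota_{A_0}^{CP}(A_0)$ --- but that theorem is stated for the Cuntz--Pimsner ring of the maximal ideal, so route (ii) again requires the maximality of $J$ that you have not established. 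So the proposal is on the right path but leaves precisely the step where hypothesis (c) does its work, and hence the injectivity of $\eta$ and the ``faithfully maximal'' assertion, unproven.
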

\begin{proof}
Note that $(A_{-1}, A_1, \psi)$ is an $A_0$-system. Since $A$ is semi-saturated, it follows that $A$ is generated as a ring by $A_{-1} \cup A_{1} \cup A_0$. Hence, $(i_{A_1}, i_{A_{-1}}, i_{A_0}, A)$ is a surjective covariant representation. In the proof of \cite[Thm. 3.1]{2018arXiv180810114O}, they show that $(A_{-1}, A_1, \psi)$ satisfies Condition (FS) and that the ideal $J=A_1 A_{-1}$ is the maximal faithful, $\psi$-compatible ideal of $A_0$. Hence, the Cuntz-Pimsner representation is well-defined and equal to $(\iota_{A_{-1}}^J, \iota_{A_1}^J, \iota_{A_0}^J, \mathcal{O}_{(A_{-1},A_1, \psi')}(J))$. Moreover, they show that the graded representation $(i_{A_1}, i_{A_{-1}}, i_{A_0}, A)$ is Cuntz-Pimsner invariant with respect to $J$. By the universal property of relative Cuntz-Pimsner rings (see \cite[Thm. 3.18]{carlsen2011algebraic}), there exists a surjective map $\eta \colon (\iota_{A_{-1}}^{CP}, \iota_{A_1}^{CP}, \iota_{A_0}^{CP}, \mathcal{O}_{(A_{-1},A_1, \psi')}) \to (i_{A_1}, i_{A_{-1}}, i_{A_0}, A)$. It follows by Lemma \ref{lem:rep_maps}, that $\eta \colon \mathcal{O}_{(A_{-1}, A_1, \psi)} \to A$ is $\mathbb{Z}$-graded. By the graded uniqueness theorem for Cuntz-Pimsner rings (see \cite[Cor. 5.4]{carlsen2011algebraic}), it follows that $\eta$ is also injective. Thus, (\ref{eq:rep_iso}) holds. Note that $A \cong_{\text{gr}} \mathcal{O}_{(A_{-1},A_1, \psi')})$ follows from Corollary \ref{cor:rep_iso}.
\end{proof}

Let $R$ be a ring, let $(P,Q,\psi)$ be an $R$-system and let $(S,T,\sigma, B)$ be a graded covariant representation of $(P,Q,\psi)$. Recall (see Definition \ref{def:covariant_representation}) that for every $k \geq 0$ and $q \in Q^{\otimes k}, p \in P^{\otimes k}$ we have that $\sigma(\psi_k(p \otimes q)) = S^{\otimes k}(p) T^{\otimes k}(q)$. Since $S^{\otimes k}(p) \in B_{-k}$ and $T^{\otimes k}(q) \in B_k$, it follows that, $\sigma (\psi_k(p \otimes q)) \in B_{-k} B_k$. Moreover, since $I_{\psi,\sigma}^{(k)}$ is generated as a $B_0$-ideal by the set $\{ \sigma (\psi_k(p \otimes q)) \mid p \in P^{\otimes k}, q \in Q^{\otimes k} \}$, we have that $I_{\psi,\sigma}^{(k)} \subseteq B_{-k} B_k$.
 Recall (see Definition \ref{def:semi-full}) that we call $(S,T, \sigma, B)$ semi-full if $I_{\psi,\sigma}^{(k)} = B_{-k} B_k $ for every $k \geq 0$.   The following result is one of the key insights of this article:

\begin{proposition}
The covariant representation $$ (i_{A_{-1}}, i_{A_1}, i_{A_0}, A) \cong_{\text{r}} (\iota_{A_{-1}}^J, \iota_{A_1}^J, \iota_{A_0}^J, \mathcal{O}_{(A_{-1},A_1, \psi')}(J)) = (\iota_{A_{-1}}^{CP}, \iota_{A_1}^{CP}, \iota_{A_0}^{CP}, \mathcal{O}_{(A_{-1},A_1, \psi')})$$ in Theorem \ref{thm:clark} is a semi-full covariant representation of $(A_{-1}, A_1, \psi)$.
\label{rem:5}
\end{proposition}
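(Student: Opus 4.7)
The plan is to prove the equality $I^{(k)}_{\psi,i_{A_0}} = A_{-k}A_k$ directly from the semi-saturation of $A$ (Proposition \ref{prop:toeplitz_exponent}(b)) together with the explicit form of $\psi_k$ for the multiplication map $\psi(a' \otimes a) = a'a$. Throughout, the representation is $(i_{A_{-1}}, i_{A_1}, i_{A_0}, A)$ with $P = A_{-1}$, $Q = A_1$, and $B_0 = A_0$, so that $i_{A_0}(\psi_k(p\otimes q)) = \psi_k(p\otimes q) \in A_0 \subseteq A$.

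First I would fix $k > 0$ and unwind the recursive definition of $\psi_k$: a straightforward induction on $k$, using $\psi_n((p_1 \otimes p_2) \otimes (q_2 \otimes q_1)) = p_1 \cdot \psi_{n-1}(p_2 \otimes q_2) \cdot q_1$, shows that for elementary tensors
\[
\psi_k\bigl((p_1 \otimes \cdots \otimes p_k) \otimes (q_k \otimes \cdots \otimes q_1)\bigr) = p_1 p_2 \cdots p_k\, q_k \cdots q_2 q_1
\]
computed in $A$, where $p_i \in A_{-1}$ and $q_j \in A_1$. In particular, each such $\psi_k(p \otimes q)$ lies in $A_{-k}A_k$, and since $A_{-k}A_k$ is already an $A_0$-subbimodule (hence $A_0$-ideal) of $A_0$, the inclusion $I^{(k)}_{\psi,i_{A_0}} \subseteq A_{-k}A_k$ is immediate.

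For the reverse inclusion, I invoke Proposition \ref{prop:toeplitz_exponent}(b), which gives $A_{-k} = (A_{-1})^k$ and $A_k = (A_1)^k$. Therefore every element of $A_{-k}A_k$ is a finite sum of products of the form $p_1 p_2 \cdots p_k \, q_k \cdots q_2 q_1$ with $p_i \in A_{-1}$ and $q_j \in A_1$; by the computation above, each such product equals $\psi_k\bigl((p_1 \otimes \cdots \otimes p_k) \otimes (q_k \otimes \cdots \otimes q_1)\bigr)$. Hence $A_{-k} A_k$ is contained in the additive group generated by $\{\psi_k(p \otimes q) \mid p \in A_{-1}^{\otimes k},\, q \in A_1^{\otimes k}\}$, and so in $I^{(k)}_{\psi,i_{A_0}}$.

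The remaining case $k=0$ is handled separately: here $P^{\otimes 0} = Q^{\otimes 0} = A_0$ and $\psi_0(r_1 \otimes r_2) = r_1 r_2$, so $I^{(0)}_{\psi,i_{A_0}}$ is the $A_0$-ideal generated by $A_0^2$, which is simply $A_0^2 = A_0 \cdot A_0 = A_{-0}A_0$. I do not foresee a genuine obstacle in the argument; the only subtle point is correctly matching the ordering conventions coming from $P^{\otimes n} = P \otimes P^{\otimes n-1}$ and $Q^{\otimes n} = Q^{\otimes n-1} \otimes Q$ when unraveling $\psi_k$, which is exactly what makes the factors multiply out in a single chain $p_1 \cdots p_k q_k \cdots q_1$ so that semi-saturation delivers the desired surjection.
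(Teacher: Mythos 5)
Your proposal is correct and follows essentially the same route as the paper: it uses semi-saturation from Proposition \ref{prop:toeplitz_exponent}(b) to factor monomials of $A_{-k}A_k$ into degree $\pm 1$ pieces, identifies these with values of $\psi_k$ via the unwound recursive formula, and treats $k=0$ separately via $\psi_0(r_1\otimes r_2)=r_1r_2$. The only (harmless) difference is that you spell out the induction for the $\psi_k$-formula and the ordering of tensor factors, which the paper simply asserts.
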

\begin{proof} 
Note that $A$ comes equipped with a $\mathbb{Z}$-grading which trivially satisfies $i_{A_{-1}}(A_{-1}) \subseteq A_{-1}$, $i_{A_{1}}(A_{1}) \subseteq A_{1}$ and $i_{A_{0}}(A_{0}) \subseteq A_{0}$. Hence, $(i_{A_{-1}}, i_{A_1}, i_{A_0}, A)$ is a graded representation of $(A_{-1}, A_1,\psi)$. Note that $I_{\psi,i_{A_0}}^{(k)}  \subseteq A_{-k} A_k$.  Recall that $A$ is semi-saturated by Proposition \ref{prop:toeplitz_exponent}(b). Thus, for any monomial $a' a \in A_{-k} A_k$, we have that $a' = a_1' a_2' \dots a_k'$ and $a=a_1 a_2 \dots a_k$ for some elements $a_i' \in A_{-1}$ and $a_i \in A_1$. Next, note that by the definition, $$\psi_k((a_1' \otimes a_2' \otimes \dots \otimes a_k') \otimes (a_1 \otimes \dots a_k)) = a_1' a_2' \dots a_k' a_1 \dots a_k = a' a.$$ Thus, $A_{-k} A_k = I_{\psi,i_{A_0}}^{(k)}.$ For $k=0$, note that $\text{Im}(\psi_0)=A_0^2$ since $\psi_0(r \otimes r')=r r'$ for all $r,r' \in A_0$ by convention. Thus, we have that $A_0 A_0 = A_0^2 = i_{A_0}(A_0^2) = I_{\psi, i_{A_0}}^{(0)}$.  Hence, it follows that $I_{\psi,i_{A_0}}^{(k)} = A_{-k} A_k$ for every integer $k \geq 0$.
\end{proof}
\begin{remark}
In particular, Proposition \ref{rem:5} implies that some of the examples Clark, Fletcher, Hazrat and Li gave in \cite{2018arXiv180810114O} are realizable from semi-full representations. More precisely, the corner skew Laurent polynomial rings (see \cite[Expl. 3.4]{2018arXiv180810114O}) and the Steinberg algebras associated to unperforated graded groupoids (see \cite[Cor. 4.6]{2018arXiv180810114O}) are realizable as the representation ring belonging to a semi-full covariant representation.
\end{remark}

We will see that, for our purposes, we only need to consider s-unital and unital $R$-systems. In the $C^*$-setting, Chirvasitu \cite{2018arXiv180512318C} only considered unital $C^*$-correspondences (i.e. the coefficient $C^*$-algebra $A$ is unital). This assumption guarantees that the Cuntz-Pimsner $C^*$-algebra is unital. We analogously introduce the following notions for $R$-systems:
\begin{definition}
Let $R$ be a ring and let $(P,Q,\psi)$ be an $R$-system. The $R$-system $(P,Q,\psi)$ is called \emph{s-unital} if $R$ is an s-unital ring and $P,Q$ are s-unital $R$-bimodules. The $R$-system $(P,Q,\psi)$ is called \emph{unital} if $R$ is a unital ring and $P,Q$ are unital $R$-bimodules.
\label{def:s-unital}
\end{definition}

\begin{remark}
At this point we make two remarks.
\begin{enumerate}[(a)]
\begin{item}
Note that we explicitly require that $R$ is an s-unital (unital) ring for the $R$-system $(P,Q,\psi)$ to be s-unital (unital). This is needed since the trivial module $\{ 0 \}$ is a unital $R$-bimodule for any ring $R$ (cf. Example \ref{ex:2}).
\end{item}
\begin{item}
Let $R$ be a unital ring, let $(P,Q,\psi)$ be a unital $R$-system and let $(S,T,\sigma,B)$ be a covariant representation of $(P,Q,\psi)$. If $1_R$ is the multiplicative identity element of $R$, then $1_B = \sigma(1_R)$ is the multiplicative identity element of $B$.
\end{item}
\end{enumerate}
\end{remark}


We now show that a certain type of semi-saturated, nearly epsilon-strongly $\mathbb{Z}$-graded rings can be realized as Cuntz-Pimsner rings coming from s-unital $R$-systems.

\begin{definition}
If $A = \bigoplus_{i \in \mathbb{Z}} A_i$ is a semi-saturated, nearly epsilon-strongly $\mathbb{Z}$-graded ring that satisfies $\Ann_{A_0}(A_1) \cap (\Ann_{A_0}(A_1))^\bot = \{ 0 \}$, then $A$ is called \emph{pre-CP}.
\label{def:pre-cp}
\end{definition}


%

As a special case of Theorem \ref{thm:clark}, we obtain the following:
\begin{corollary}
Let $A = \bigoplus_{i \in \mathbb{Z}} A_i$ be a pre-CP ring. Let $\psi \colon A_{-1} \otimes A_1 \to A_0$ be defined by $a \otimes b \mapsto ab$. Then $(A_{-1}, A_1, \psi)$ is an s-unital $A_0$-system that satisfies Condition (FS) and 
\begin{equation}
(i_{A_{-1}}, i_{A_1}, i_{A_0}, A) \cong_{\text{r}} (\iota_{A_{-1}}^{CP}, \iota_{A_1}^{CP}, \iota_{A_0}^{CP}, \mathcal{O}_{(A_{-1}, A_1, \psi)}).
\label{eq:5}
\end{equation}
In particular, $A \cong_{\text{gr}} \mathcal{O}_{(A_{-1},A_1,\psi)}$. Furthermore, the covariant representation (\ref{eq:5}) is semi-full.  
\label{cor:nearly_cuntz}
\end{corollary}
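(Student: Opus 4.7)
The plan is to recognize the statement as a direct application of Theorem \ref{thm:clark} coupled with Proposition \ref{rem:5}. The pre-CP hypothesis is engineered so that conditions (a) (semi-saturation) and (c) (the annihilator condition) of Theorem \ref{thm:clark} hold by definition. Hence the only real work is to derive hypothesis (b) of Theorem \ref{thm:clark} and, separately, to verify the extra claim that $(A_{-1}, A_1, \psi)$ is an s-unital $A_0$-system in the sense of Definition \ref{def:s-unital}.

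First I would dispose of hypothesis (b). Since $A$ is nearly epsilon-strongly $\mathbb{Z}$-graded, Proposition \ref{prop:nearly_char}(a) asserts that each $A_i A_{-i}$ is an s-unital ideal of $A_0$ and that $A_i = A_i A_{-i} A_i$. Specialising to $i=1$ gives that $A_1$ is a left s-unital $A_1 A_{-1}$-module, while specialising to $i=-1$ gives that $A_{-1}$ is a right s-unital $A_1 A_{-1}$-module. Remark \ref{rem:s-unital} then upgrades each of these elementwise statements to the simultaneous approximate unit required by Theorem \ref{thm:clark}(b).

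Next I would verify the s-unital $A_0$-system property. Applying Proposition \ref{prop:nearly_char}(a) at $i=0$ yields that $A_0 A_0$ is s-unital; since symmetric grading forces $A_0 = A_0 A_0 A_0 \subseteq A_0 A_0 \subseteq A_0$, we get $A_0 = A_0 A_0$, so $A_0$ itself is an s-unital ring. Because the local units produced above for $A_1$ and $A_{-1}$ already lie inside $A_1 A_{-1} \subseteq A_0$ (and symmetrically $A_{-1} A_1 \subseteq A_0$ for the other side), the $A_1 A_{-1}$-$A_{-1} A_1$-bimodule s-unitality of $A_1$ and $A_{-1}$ immediately extends to s-unitality as $A_0$-bimodules.

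With all hypotheses in place, Theorem \ref{thm:clark} supplies that $(A_{-1}, A_1, \psi)$ satisfies Condition (FS), that $J = A_1 A_{-1}$ is the maximal faithful $\psi$-compatible ideal (so that the Cuntz-Pimsner ring is well-defined), and that the representation isomorphism (\ref{eq:5}) holds. The graded ring isomorphism $A \cong_{\text{gr}} \mathcal{O}_{(A_{-1}, A_1, \psi)}$ then follows from Corollary \ref{cor:rep_iso}, and semi-fullness of the representation is exactly what Proposition \ref{rem:5} asserts for the representation produced by Theorem \ref{thm:clark}. The main (and only mildly awkward) obstacle is the bookkeeping needed to reconcile Definition \ref{def:nystedt_epsilon}'s $A_i A_{-i}$-$A_{-i} A_i$-bimodule formulation of s-unitality with Definition \ref{def:s-unital}'s $A_0$-bimodule formulation; the argument above shows this is essentially automatic once one notes that the approximate units live in subideals of $A_0$.
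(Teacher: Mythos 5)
Your proposal is correct and takes essentially the same route as the paper: conditions (a) and (c) of Theorem \ref{thm:clark} hold by the definition of pre-CP, condition (b) and the s-unitality of the $A_0$-system are extracted from the nearly epsilon-strong grading (via Definition \ref{def:nystedt_epsilon}, Proposition \ref{prop:nearly_char}(a) and Remark \ref{rem:s-unital}), and the conclusion follows by applying Theorem \ref{thm:clark} together with Proposition \ref{rem:5}. Your careful bookkeeping with the $i=\pm 1$ and $i=0$ bimodule structures is simply a correct elaboration of steps the paper states more tersely.
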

\begin{proof}
Note that conditions (a) and (c) in Theorem \ref{thm:clark} are satisfied by definition. Moreover, by the assumption that $A$ is nearly epsilon-strongly $\mathbb{Z}$-graded (see Definition \ref{def:nystedt_epsilon}), it follows that $A_1$ is an s-unital $A_1 A_{-1} \text{--} A_{-1}A_{1}$-bimodule. From this, (b) follows directly. Furthermore, we see that $(A_{-1}, A_1, \psi)$ is an s-unital $A_0$-system.  The conclusion now follows by applying Theorem \ref{thm:clark} and Proposition \ref{rem:5}.
\end{proof}

Next, we give two sets of sufficient conditions for a ring to be pre-CP. Recall that a ring is called \emph{semi-prime} if it has no nonzero nilpotent ideals. 

\begin{lemma}
Let $A = \bigoplus_{i \in \mathbb{Z}} A_i$ be a $\mathbb{Z}$-graded ring. The following assertions hold:

\begin{enumerate}[(a)]
\begin{item}
  If $A_0$ is semi-prime, then $\Ann_{A_0}(A_1) \cap (\Ann_{A_0}(A_1))^\bot = \{ 0 \}$. If $A$ is semi-saturated, nearly epsilon-strongly $\mathbb{Z}$-graded and $A_0$ is semi-prime, then $A$ is pre-CP.
\end{item}
\begin{item}
If $A$ is unital strongly $\mathbb{Z}$-graded, then $A$ is pre-CP. 
\end{item}
\end{enumerate}
\label{lem:semi-prime}
\end{lemma}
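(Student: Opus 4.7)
My plan is to handle the two parts separately, with part (a) relying on a standard semi-prime annihilator argument and part (b) relying on the fact that strong unital grading places $1_{A_0}$ into $A_1A_{-1}$.

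For part (a), the first task is to set up $I := \Ann_{A_0}(A_1)$ as a two-sided ideal of $A_0$ and to verify that $I^\bot$ is also a two-sided ideal of $A_0$. Both verifications are routine: $I$ is a left ideal by definition, and a right ideal because $A_1$ is an $A_0$-bimodule (so $r \in I$, $s \in A_0$ give $(rs) \cdot a = r(sa) = 0$); similarly $I^\bot$ is two-sided because $I$ itself is a two-sided ideal, so $sx, xs \in I$ for $x \in I$, $s \in A_0$. Then the key observation is that $I \cap I^\bot$ is a two-sided ideal of $A_0$ whose square vanishes: for $a, b \in I \cap I^\bot$, $ab = 0$ because $a \in I^\bot$ and $b \in I$. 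Semi-primeness of $A_0$ then forces $I \cap I^\bot = \{0\}$. The second statement in (a) is immediate from Definition \ref{def:pre-cp}: semi-saturation and nearly epsilon-strong grading are assumed, and the annihilator condition follows from what we just proved.

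For part (b), observe first that a unital strong grading $A_mA_n = A_{m+n}$ immediately gives semi-saturation by a one-line induction. Next, the implication chain in (\ref{eq:implications}) shows that $A$ is nearly epsilon-strongly $\mathbb{Z}$-graded. The only substantive step is the annihilator condition. Since $A_1A_{-1} = A_0 \ni 1_{A_0}$, we may write $1_{A_0} = \sum_{i=1}^n a_i b_i$ for some $a_i \in A_1$, $b_i \in A_{-1}$. For any $r \in \Ann_{A_0}(A_1)$ we then get
\[
r = r \cdot 1_{A_0} = \sum_{i=1}^n (r a_i) b_i = 0,
\]
so $\Ann_{A_0}(A_1) = \{0\}$ and hence trivially $\Ann_{A_0}(A_1) \cap (\Ann_{A_0}(A_1))^\bot = \{0\}$. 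Combining the three ingredients yields that $A$ is pre-CP.

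There is no substantial obstacle here; the proof is mostly bookkeeping. The only mild subtlety is remembering that $I^\bot$ is defined as the two-sided annihilator and verifying that it is a two-sided ideal (which uses that $I$ itself is a two-sided ideal), so that $I \cap I^\bot$ has the structure needed to invoke semi-primeness.
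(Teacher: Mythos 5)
Your proposal is correct and follows essentially the same route as the paper: in (a) you observe that $\Ann_{A_0}(A_1)\cap(\Ann_{A_0}(A_1))^\bot$ is an ideal with zero square and invoke semi-primeness (the paper states this in one line as the intersection being a nilpotent ideal), and in (b) you deduce semi-saturation from strongness, cite the implication chain (\ref{eq:implications}), and kill $\Ann_{A_0}(A_1)$ using $1_{A_0}\in A_1A_{-1}=A_0$, which is exactly the paper's argument $\Ann_{A_0}(A_1)\subseteq\Ann_{A_0}(A_1A_{-1})=\Ann_{A_0}(A_0)=\{0\}$ written out explicitly.
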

\begin{proof}
%
(a): Note that $\Ann_{A_0}(A_1) \cap (\Ann_{A_0}(A_1))^\bot$ is a nilpotent ideal of $A_0$.


(b): Since $A$ is unital strongly $\mathbb{Z}$-graded, it follows that $A_i = (A_1)^i, A_{-i} = (A_{-1})^i$ for $i > 0$. Hence, $A$ is semi-saturated. Moreover, $\Ann_{A_0}(A_1) \subseteq \Ann_{A_0}(A_1A_{-1}) = \Ann_{A_0}(A_0) = \{ 0 \}$ since $A_0$ is unital. It follows that $\Ann_{A_0}(A_1) \cap (\Ann_{A_0}(A_1))^\bot = \{ 0 \}$. Finally, recall that unital strongly $\mathbb{Z}$-graded rings are nearly epsilon-strongly $\mathbb{Z}$-graded (see (\ref{eq:implications})). Thus, $A$ is pre-CP.
\end{proof}

\begin{proposition}
Let $K$ be a unital ring and let $E$ be any directed graph. Then the Leavitt path algebra $L_K(E)$ is pre-CP.
\label{prop:lpa-cp}
\end{proposition}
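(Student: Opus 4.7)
The plan is to verify the three defining properties of a pre-CP ring (Definition \ref{def:pre-cp}) for $A := L_K(E)$ equipped with its natural $\mathbb{Z}$-grading, in which $A_n$ is the $K$-span of the monomials $\mu\nu^*$ with $|\mu| - |\nu| = n$.

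Semi-saturatedness is immediate from the graded structure: for $n > 0$ and any generator $\mu\nu^* \in A_n$, I factor $\mu\nu^* = \mu_1 \cdot \mu_2 \cdots \mu_{n-1} \cdot (\mu_n \mu_{n+1} \cdots \mu_{|\mu|} \nu^*)$, where each of the first $n-1$ factors is an edge (hence in $A_1$) and the last factor is a degree-$1$ monomial in $A_1$. A symmetric argument on ghost edges gives $A_{-n} = (A_{-1})^n$ for $n > 0$. That $L_K(E)$ is nearly epsilon-strongly $\mathbb{Z}$-graded is precisely \cite[Thm.~1.3]{nystedt2017epsilon} and applies to every directed graph $E$ over every unital ring $K$.

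The substantive step is the annihilator condition $\Ann_{A_0}(A_1) \cap \Ann_{A_0}(A_1)^\bot = \{0\}$. I would first identify $\Ann_{A_0}(A_1)$ explicitly as the $K$-linear span of the sink vertices of $E$. Every sink $v$ lies in this annihilator since $v \cdot \alpha\beta^* = \delta_{v, s(\alpha)} \alpha\beta^* = 0$ for every basis element $\alpha\beta^*$ of $A_1$ (which has $|\alpha| \geq 1$). Conversely, for $x \in \Ann_{A_0}(A_1)$ written in normal form $x = \sum_i k_i \mu_i\nu_i^*$ with $|\mu_i|=|\nu_i|$ and $r(\mu_i)=r(\nu_i)$, I compute $x \cdot f$ for each edge $f$ using the Cuntz--Krieger relations; the identities $\mu_i\nu_i^* f = \delta_{\nu_i^{(1)}, f}\,\mu_i (\nu_i^{(2)} \cdots \nu_i^{(|\nu_i|)})^*$ for $|\nu_i| \geq 1$ and $\mu_i\nu_i^* f = \delta_{v_i, s(f)} f$ for $|\nu_i|=0$ (with $v_i := \mu_i = \nu_i$) realize $x \cdot f$ as a $K$-linear combination of standard basis elements of $A_1$. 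Since the standard monomials form a $K$-basis of $L_K(E)$, varying $f$ over all edges forces $k_i = 0$ whenever either $|\nu_i| \geq 1$ or $v_i$ is a non-sink vertex.

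Given this characterization, the orthogonality argument is short. Take any $x = \sum_{v} k_v v \in \Ann_{A_0}(A_1) \cap \Ann_{A_0}(A_1)^\bot$, the sum running over sinks. Each sink $v'$ itself lies in $\Ann_{A_0}(A_1)$, so $x \in \Ann_{A_0}(A_1)^\bot$ yields $x v' = 0$; by orthogonality of distinct vertex idempotents this reduces to $k_{v'} v' = 0$, which forces $k_{v'} = 0$ by the $K$-linear independence of the vertices in $L_K(E)$. Hence $x = 0$. The main obstacle is the normal-form bookkeeping identifying $\Ann_{A_0}(A_1)$ in the previous paragraph; once that is established, the orthogonality step against vertex idempotents is uniform for finite and infinite graphs alike and makes no further assumption on $K$.
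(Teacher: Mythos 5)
Your overall strategy is sound, and two of the three pre-CP conditions are handled correctly: the direct factorization argument for semi-saturation works (the paper instead deduces this from Proposition \ref{prop:toeplitz_exponent}(b) via the Cuntz--Pimsner realization of $L_K(E)$), and your final orthogonality step against the sink idempotents is essentially the paper's. The gap is in the identification $\Ann_{A_0}(A_1)=\Span_K\{\text{sinks}\}$, specifically in the supporting claim that ``the standard monomials form a $K$-basis of $L_K(E)$.'' They do not: at every regular vertex $v$ the relation $v=\sum_{e\in s^{-1}(v)}ee^*$ makes the monomials $\mu\nu^*$ linearly dependent, so the ``normal form'' coefficients $k_i$ are not well defined and the step ``varying $f$ over all edges forces $k_i=0$'' does not follow. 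Even if you pass to a genuine basis (choose a special edge at each regular vertex and discard the monomials in which both $\mu$ and $\nu$ end in it, as in the Abrams--Ara--Siles Molina basis), the comparison is not monomial-by-monomial: for instance $v\cdot f=f$ and $ff^*\cdot f=f$ land on the same basis element, so $xf=0$ only yields mixed relations such as $k_v+c_{ff^*}=0$, and untangling them requires additional work (use the special edge at each regular vertex, an edge avoiding the finitely many terms of $x$ at each infinite emitter, and a verification that the monomials produced are distinct basis elements). As written, the hard inclusion $\Ann_{A_0}(A_1)\subseteq\Span_K\{\text{sinks}\}$ is therefore not established.

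The paper avoids this bookkeeping entirely: since $L_K(E)$ is nearly epsilon-strongly graded, the ideal $L_K(E)_1L_K(E)_{-1}$ is s-unital, whence $\Ann_{L_K(E)_0}(L_K(E)_1)\cap L_K(E)_1L_K(E)_{-1}=\{0\}$, and every degree-zero monomial $\alpha\beta^*$ with $\mathrm{len}(\alpha)=\mathrm{len}(\beta)\geq 1$ lies in $L_K(E)_1L_K(E)_{-1}$; this confines the annihilator to $\Span_K E^0$ with no basis argument at all, after which deciding which vertices annihilate $L_K(E)_1$ is the easy computation you do correctly. To repair your proposal, either import this s-unitality trick or carry out the corrected basis argument sketched above; the version you give fails at the linear-independence step.
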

\begin{proof}
The Leavitt path algebra $L_K(E)$ is nearly epsilon-strongly $\mathbb{Z}$-graded (see \cite[Thm. 1.3]{nystedt2017epsilon}). Moreover, since  $L_K(E)$ can be realized as a Cuntz-Pimsner ring (see Section \ref{sec:lpa}), it follows by Proposition \ref{prop:toeplitz_exponent}(b) that $L_K(E)$ is semi-saturated. 
Next, we prove that,
\begin{equation}
\Ann_{L_K(E)_0}(L_K(E)_1) = \Span_K \{ v \in E^0 \mid v E^1 = \{ 0 \} \}.
\label{eq:71}
\end{equation} 
Since $L_K(E)_1 L_K(E)_{-1}$ is s-unital by Proposition \ref{prop:nearly_char}(a) and, $$\Ann_{L_K(E)_0}(L_K(E)_1)  \subseteq \Ann_{L_K(E)_0}(L_K(E)_1 L_K(E)_{-1}),$$ it follows that, 
\begin{equation}
L_K(E)_1 L_K(E)_{-1} \cap \Ann_{L_K(E)_0}(L_K(E)_1) \subseteq \Ann_{L_K(E)_1 L_K(E)_{-1}}(L_K(E)_1 L_K(E)_{-1}) = \{ 0 \}.
\label{eq:81}
\end{equation}
Furthermore, recall that the natural $\mathbb{Z}$-grading of $L_K(E)$ is given by, 
\begin{equation*}
L_K(E)_i = \Span_K \{ \alpha \beta^* \mid \alpha, \beta \in \text{Path}(E), \text{len}(\alpha) - \text{len}(\beta) =i \},
\end{equation*}
for all $i \in \mathbb{Z}$. By convention, the elements $v \in L_K(E)_0$ are considered to be paths of zero length. This means that $L_K(E)_0$ is generated by the sets $E^0$ and $B:=\{ \alpha \beta^* \mid \text{len}(\alpha)=\text{len}(\beta) \geq 1 \}.$ Any $\alpha \beta^* \in B$ can be written $\alpha \beta^* = f_1 \alpha' (\beta')^* (f_2)^* \in L_K(E)_1 L_K(E)_0 L_K(E)_{-1} = L_K(E)_1 L_K(E)_{-1} $ for some $f_1, f_2 \in E^1$ and $\alpha, \beta \in \text{Path}(E)$. Thus, $B \subseteq L_K(E)_1 L_K(E)_{-1}$.  By (\ref{eq:81}), it follows that $\Ann_{L_K(E)_0}(L_K(E)_1) \subseteq \Span_K \{ v \in E^0 \}$. 

To establish (\ref{eq:71}), it remains to prove that for any $v \in E^0$, we have that $v L_K(E)_1 = \{ 0 \}$ if and only if $v E^1 = \{ 0 \}$. The `only if' direction is clear since $E^1 \subseteq L_K(E)_1$. On the other hand, let $v \in E^0$ such that $v E^1 = \{0 \}$. Note that any $\alpha \beta^* \in L_K(E)_1$ satisfies $\text{len}(\alpha) - \text{len}(\beta) = 1$ which implies that $\text{len}(\alpha) \geq 1$. Hence, we can write $\alpha = f' \alpha'$ for some $f' \in E^1$ and some $\alpha' \in \text{Path}(E)$. It follows that $v \alpha \beta^*  = (v f') \alpha' \beta^* =0 $. Hence, $v L_K(E)_1 = \{ 0 \}$.

A moment's thought yields that, $$(\Ann_{L_K(E)_0}(L_K(E)_1))^\bot \cap \Span_K \{ v \in E^0 \} = \Span_K \{ v \in E^0 \mid vE^1 \ne \{ 0 \} \} .$$ Hence, $\Ann_{L_K(E)_0}(L_K(E)_1) \cap (\Ann_{L_K(E)_0}(L_K(E)_1))^\bot = \{ 0 \}$ and $L_K(E)$ is pre-CP.
\end{proof}

From Corollary \ref{cor:nearly_cuntz}, we derive necessary conditions for certain Cuntz-Pimsner rings to be nearly epsilon-strongly $\mathbb{Z}$-graded.

\begin{corollary}
Let $(P,Q,\psi)$ be an $R$-system such that (i) $\mathcal{O}_{(P,Q,\psi)}=\bigoplus_{i \in \mathbb{Z}} \mathcal{O}_i$ exists and is nearly epsilon-strongly $\mathbb{Z}$-graded and (ii) $ \Ann_{\mathcal{O}_0}(\mathcal{O}_1) \cap (\Ann_{\mathcal{O}_0}(\mathcal{O}_1))^\bot = \{ 0 \}$.  

\noindent

Let $\psi' \colon \mathcal{O}_{-1} \otimes \mathcal{O}_1 \to \mathcal{O}_0$ be defined by $\psi'(a \otimes a') = a a'$. Then $(\mathcal{O}_{-1}, \mathcal{O}_{1}, \psi')$ is an s-unital $\mathcal{O}_0$-system such that,
$$ (i_{\mathcal{O}_{-1}}, i_{\mathcal{O}_1}, i_{\mathcal{O}_0}, \mathcal{O}_{(P,Q,\psi)}) \cong_{\text{r}} (\iota_{\mathcal{O}_{-1}}^{CP}, \iota_{\mathcal{O}_{1}}^{CP}, \iota_{\mathcal{O}_{0}}^{CP}, \mathcal{O}_{(\mathcal{O}_{-1}, \mathcal{O}_1, \psi')}).$$ In particular, $\mathcal{O}_{(P,Q,\psi)} \cong_{\text{gr}} \mathcal{O}_{(\mathcal{O}_{-1}, \mathcal{O}_1, \psi')}$  Furthermore, the following assertions hold:
\begin{enumerate}[(a)]
\begin{item}
$(\mathcal{O}_{-1}, \mathcal{O}_{1}, \psi')$ is an s-unital $\mathcal{O}_0$-system that satisfies Condition (FS);
\end{item}
\begin{item}
$(\iota_{\mathcal{O}_{-1}}^{CP}, \iota_{\mathcal{O}_{1}}^{CP}, \iota_{\mathcal{O}_{0}}^{CP}, \mathcal{O}_{(\mathcal{O}_{-1}, \mathcal{O}_1, \psi')})$ is a semi-full covariant representation of $(\mathcal{O}_{-1}, \mathcal{O}_{1}, \psi')$;
\end{item}
\begin{item}
$I_{\psi',\iota_{\mathcal{O}}^{CP}}^{(k)} = \mathcal{O}_{-k} \mathcal{O}_k$ is s-unital for $k \geq 0$. 
\end{item}
\end{enumerate} 
\label{cor:reduction}
\end{corollary}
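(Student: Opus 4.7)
The plan is to observe that the Cuntz-Pimsner ring $\mathcal{O}_{(P,Q,\psi)}$ itself satisfies the hypotheses of Corollary \ref{cor:nearly_cuntz}, i.e.\ that it is a pre-CP ring in the sense of Definition \ref{def:pre-cp}, and then to apply that corollary with $A = \mathcal{O}_{(P,Q,\psi)}$. Two of the three pre-CP conditions are handed to us: hypothesis (i) supplies nearly epsilon-strong $\mathbb{Z}$-gradedness, and hypothesis (ii) supplies the annihilator condition $\Ann_{\mathcal{O}_0}(\mathcal{O}_1) \cap (\Ann_{\mathcal{O}_0}(\mathcal{O}_1))^{\bot} = \{0\}$.

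For the remaining condition, semi-saturation, the plan is simply to note that $(\iota_Q^{CP}, \iota_P^{CP}, \iota_R^{CP}, \mathcal{O}_{(P,Q,\psi)})$ is a graded covariant representation of $(P,Q,\psi)$. Proposition \ref{prop:toeplitz_exponent}(b) then yields $\mathcal{O}_n = (\mathcal{O}_1)^n$ and $\mathcal{O}_{-n} = (\mathcal{O}_{-1})^n$ for every $n > 0$, so $\mathcal{O}_{(P,Q,\psi)}$ is pre-CP.

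With this in place, Corollary \ref{cor:nearly_cuntz} applied to $A = \mathcal{O}_{(P,Q,\psi)}$ immediately produces the displayed isomorphism of covariant representations, the graded ring isomorphism $\mathcal{O}_{(P,Q,\psi)} \cong_{\text{gr}} \mathcal{O}_{(\mathcal{O}_{-1}, \mathcal{O}_1, \psi')}$, statement (a) (the $\mathcal{O}_0$-system $(\mathcal{O}_{-1}, \mathcal{O}_1, \psi')$ is s-unital and satisfies Condition (FS)), and statement (b) (semi-fullness of the Cuntz-Pimsner representation). For statement (c), the plan is to combine (b) with Proposition \ref{prop:nearly_char}(a): semi-fullness gives $I_{\psi', \iota_\mathcal{O}^{CP}}^{(k)} = \mathcal{O}_{-k} \mathcal{O}_k$ for every $k \geq 0$, while Proposition \ref{prop:nearly_char}(a), applied to the nearly epsilon-strongly graded ring $\mathcal{O}_{(P,Q,\psi)}$ at the index $-k$, guarantees that $\mathcal{O}_{-k} \mathcal{O}_k$ is an s-unital ideal of $\mathcal{O}_0$.

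No serious obstacle is expected here: the statement is engineered to be a direct transplant of Corollary \ref{cor:nearly_cuntz} to the setting where the ambient graded ring is already a Cuntz-Pimsner ring, so the argument is essentially a bookkeeping reduction. The only care-point is the case $k = 0$ in (c), which is handled exactly as in the proof of Proposition \ref{rem:5}: semi-fullness at $k = 0$ reads $\mathcal{O}_0 \mathcal{O}_0 = I_{\psi', \iota_\mathcal{O}^{CP}}^{(0)}$, and s-unitality of this ideal follows because the nearly epsilon-strong grading hypothesis forces $\mathcal{O}_0$ itself to be an s-unital ring.
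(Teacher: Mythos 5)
Your proposal is correct and follows the paper's own argument essentially verbatim: semi-saturation via Proposition \ref{prop:toeplitz_exponent}, hence pre-CP from (i) and (ii), then Corollary \ref{cor:nearly_cuntz} for the isomorphism and (a)--(b), and finally (c) from semi-fullness combined with Proposition \ref{prop:nearly_char}(a). No discrepancies worth noting.
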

\begin{proof}
By Proposition \ref{prop:toeplitz_exponent}, $\mathcal{O}_{(P,Q,\psi)}$ is semi-saturated. Hence, with (i) and (ii), it follows that $\mathcal{O}_{(P,Q,\psi)}$ is pre-CP. Thus, Corollary \ref{cor:nearly_cuntz} establishes the isomorphism of covariant representations and the conclusions (a), (b). Since the covariant representation is semi-full we have that $I_{\psi',\iota_{\mathcal{O}}^{CP}}^{(k)} = \mathcal{O}_{-k} \mathcal{O}_k$ for each $k \geq 0$. By (i) and Proposition \ref{prop:nearly_char}(a), we see that $\mathcal{O}_{-k} \mathcal{O}_k$ is s-unital for every $k \geq 0$. Thus, (c) is established. 
\end{proof}

\begin{remark}
It is not clear to the author if the assumption (ii) in Corollary \ref{cor:reduction} is needed. No examples of nearly epsilon-strongly $\mathbb{Z}$-graded Cuntz-Pimsner rings that do not satisfy  $\Ann_{\mathcal{O}_0}(\mathcal{O}_1) \cap (\Ann_{\mathcal{O}_0}(\mathcal{O}_1))^\bot = \{ 0 \}$ have been found. On the other hand, it follows from Lemma \ref{lem:semi-prime} that condition (ii) in Corollary \ref{cor:reduction} is satisfied if either $\mathcal{O}_0$ is semi-prime or $\mathcal{O}_{(P,Q,\psi)}$ is strongly $\mathbb{Z}$-graded. 
\end{remark}

\section{Strongly $\mathbb{Z}$-graded Cuntz-Pimsner rings}
\label{sec:strongly}

In this section, we will provide sufficient conditions for the Toeplitz and Cuntz-Pimsner rings to be strongly $\mathbb{Z}$-graded. This is an algebraic analogue of recent work by Chirvasitu  \cite{2018arXiv180512318C} where he gave necessary and sufficient conditions for the gauge action of a Cuntz-Pimsner $C^*$-algebra to be free. Unfortunately, his proofs rely on topological arguments which do not seem to generalize fully to the algebraic setting.  

We begin by introducing the following new condition that is stronger than Condition (FS):

\begin{definition}
Let $R$ be a ring. An $R$-system $(P,Q,\psi)$ is said to satisfy \emph{Condition (FS')}  if there exist some $\Theta \in \mathcal{F}_P(Q)$ and $\Phi \in \mathcal{F}_Q(P)$ such that $\Theta(q) = q$ and $\Phi(p)=p$ for every $q \in Q$ and $p \in P$.
\label{def:cond_fsprime}
\end{definition}
We will later give an example (see Example \ref{ex:fsprime}) which shows that Condition (FS) and Condition (FS') are in fact different. 
We omit the proof of the following proposition as it is a straightforward analogue of the corresponding statement for Condition (FS). 

\begin{proposition}(cf. \cite[Lem. 3.8]{carlsen2011algebraic})
Let $R$ be a ring and let $(P,Q,\psi)$ be an $R$-system. If $(P,Q,\psi)$ satisfies condition (FS'), then $(P^{\otimes n}, Q^{\otimes n}, \psi_n)$ satisfies condition (FS') for every integer $n \geq 1$. 
\label{prop:fsprime}
\end{proposition}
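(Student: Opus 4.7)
My approach is to induct on $n$, using the recursive definition of $\psi_n$ to bootstrap $\Theta_{n+1}$ from $\Theta_n$ and $\Theta$. The base case $n=1$ is exactly the hypothesis that $(P,Q,\psi)$ satisfies Condition (FS'). For the inductive step, assume $\Theta_n \in \mathcal{F}_{P^{\otimes n}}(Q^{\otimes n})$ satisfies $\Theta_n(q) = q$ for all $q \in Q^{\otimes n}$, and analogously for $\Phi_n$. A short check shows that $\theta_{q \cdot r,\,p} = \theta_{q,\,r \cdot p}$ (this is just the fact that $\psi$ is an $R$-bimodule map), so the $R$-coefficients appearing in the span can be absorbed into the arguments; I may therefore write $\Theta = \sum_i \theta_{q_i,p_i}$ and $\Theta_n = \sum_j \theta_{Q_j,P_j}$ as plain finite sums.

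I would then propose the candidate
$$\Theta_{n+1} := \sum_{i,j} \theta_{Q_j \otimes q_i,\, p_i \otimes P_j} \in \mathcal{F}_{P^{\otimes (n+1)}}(Q^{\otimes (n+1)}),$$
which lives in the correct ring since $Q_j \otimes q_i \in Q^{\otimes n} \otimes Q = Q^{\otimes (n+1)}$ and $p_i \otimes P_j \in P \otimes P^{\otimes n} = P^{\otimes (n+1)}$. To verify that $\Theta_{n+1}(q_2' \otimes q_1') = q_2' \otimes q_1'$ for an elementary tensor with $q_2' \in Q^{\otimes n}$ and $q_1' \in Q$, I would unfold via the recursive formula $\psi_{n+1}((p_i \otimes P_j) \otimes (q_2' \otimes q_1')) = \psi(p_i \cdot \psi_n(P_j \otimes q_2'),\, q_1')$, abbreviate $r_j := \psi_n(P_j \otimes q_2') \in R$, use the bimodule identity $\psi(p_i r_j,\, q_1') = \psi(p_i,\, r_j q_1')$ to push the scalar rightward, sum over $i$ to apply the defining property of $\Theta$ and collapse the inner $Q$-factor to $r_j q_1'$, invoke the balancing $Q_j \otimes (r_j q_1') = (Q_j \cdot r_j) \otimes q_1'$ to slide the scalar onto the outer factor, and finally sum over $j$ to apply $\Theta_n$. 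A mirror construction $\Phi_{n+1} := \sum_{i,j} \theta_{p_i \otimes P_j,\, Q_j \otimes q_i}$ handles the $\Phi$ side, with an analogous computation that exploits $P^{\otimes (n+1)} = P \otimes P^{\otimes n}$.

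The main obstacle is purely combinatorial: one must respect the asymmetry imposed by the definition of $\psi_n$, which peels off the leftmost factor of $P^{\otimes n}$ and the rightmost factor of $Q^{\otimes n}$, and thread scalars through the balanced tensor product in the correct direction. Once the index conventions are chosen to match this asymmetry, every step reduces either to a bimodule identity for $\psi$ or to an invocation of the inductive hypothesis, so there is no genuinely hard analytic content.
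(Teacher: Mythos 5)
Your proof is correct, and it is essentially the argument the paper has in mind: the paper omits the proof, calling it a straightforward analogue of Carlsen and Ortega's Lemma 3.8 for Condition (FS), and your induction with $\Theta_{n+1}=\sum_{i,j}\theta_{Q_j\otimes q_i,\,p_i\otimes P_j}$, using the recursion $\psi_{n+1}((p\otimes P)\otimes(Q\otimes q))=\psi(p\cdot\psi_n(P\otimes Q)\otimes q)$ together with the balancing of the tensor products, is exactly that analogue. One notational caveat: as literally written, your $\Phi_{n+1}$ reuses the data $(q_i,p_i)$ and $(Q_j,P_j)$ coming from $\Theta$ and $\Theta_n$, which need not satisfy $\sum_i\psi(y\otimes q_i)\cdot p_i=y$; the mirror construction must be built from the data of $\Phi$ and $\Phi_n$ instead, after which your "analogous computation" (peeling the leftmost $P$-factor and sliding $s_j=\psi_n(y_2\otimes Q_j)$ through the balanced tensor) goes through verbatim.
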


Throughout the rest of this section, we assume that $R$ is a unital ring and that $(P,Q,\psi)$ is a unital $R$-system. The following result characterizes Condition (FS'):

\begin{proposition}
Let $R$ be a unital ring and let $(P,Q,\psi)$ be a unital $R$-system. The following assertions are equivalent:
\begin{enumerate}[(a)]
\begin{item}
$(P,Q,\psi)$ satisfies Condition (FS');
\end{item}
\begin{item}
$\id_Q = \Delta(1_R) \in \mathcal{F}_P(Q)$ and $\id_P = \Gamma(1_R) \in \mathcal{F}_Q(P)$. In this case, $\mathcal{L}_P(Q) = \mathcal{F}_P(Q)$ and $\mathcal{L}_Q(P) = \mathcal{F}_Q(P)$ are unital rings;
\end{item}
\begin{item}
$(P,Q,\psi)$ satisfies Condition (FS), $Q_R$ is finitely generated as a right $R$-module and $_R P$ is finitely generated as a left $R$-module.
\end{item}
\end{enumerate}
\label{prop:fsprime_char}
\end{proposition}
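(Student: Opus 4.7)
The plan is to establish the cyclic chain of implications (a) $\Rightarrow$ (b) $\Rightarrow$ (c) $\Rightarrow$ (a). The unital hypothesis on $(P,Q,\psi)$ provides the key leverage throughout: it forces $\Delta(1_R) = \id_Q$ and $\Gamma(1_R) = \id_P$ as maps, so the content of condition (b) is literally that $\id_Q \in \mathcal{F}_P(Q)$ and $\id_P \in \mathcal{F}_Q(P)$.

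For (a) $\Rightarrow$ (b), if $\Theta \in \mathcal{F}_P(Q) \subseteq \text{End}(Q_R)$ satisfies $\Theta(q) = q$ for every $q \in Q$, then $\Theta$ \emph{is} the identity map $\id_Q$, and similarly $\Phi = \id_P$. For the unitality statement, recall from \cite[Lem. 2.3]{carlsen2011algebraic} that $\mathcal{F}_P(Q)$ is a two-sided ideal of $\mathcal{L}_P(Q)$; note that $\id_Q$ is adjointable (with adjoint $\id_P$) and is the multiplicative identity of $\mathcal{L}_P(Q)$. Once $\id_Q \in \mathcal{F}_P(Q)$, every $T \in \mathcal{L}_P(Q)$ can be written $T = T \cdot \id_Q$, which lies in the ideal; this forces $\mathcal{L}_P(Q) = \mathcal{F}_P(Q)$, and the common ring is unital with unit $\id_Q$. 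The symmetric argument gives $\mathcal{L}_Q(P) = \mathcal{F}_Q(P)$. The converse (b) $\Rightarrow$ (a) is immediate by taking $\Theta = \id_Q$ and $\Phi = \id_P$.

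For (b) $\Rightarrow$ (c), Condition (FS) is trivial. To extract a finite right $R$-generating set for $Q$, unfold $\id_Q$ as a finite element of the $R$-linear span: $\id_Q = \sum_{i=1}^{n} \theta_{q_i, p_i}$, where any left coefficients $r_i \in R$ have been absorbed using the identity $r \cdot \theta_{q,p} = \theta_{rq, p}$ (a direct consequence of $\theta_{q,p}(x) = q \cdot \psi(p \otimes x)$ and the $R$-bimodule structure on $Q$). Applying this equality to an arbitrary $q \in Q$ gives
\begin{equation*}
q \ = \ \id_Q(q) \ = \ \sum_{i=1}^{n} q_i \cdot \psi(p_i \otimes q),
\end{equation*}
so $\{q_1, \dots, q_n\}$ generates $Q$ as a right $R$-module; the analogous unfolding of $\id_P \in \mathcal{F}_Q(P)$ handles $_R P$.

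For (c) $\Rightarrow$ (a), pick finite generating sets $\{q_1, \dots, q_n\} \subseteq Q$ and $\{p_1, \dots, p_m\} \subseteq P$ and apply Condition (FS) to produce $\Theta \in \mathcal{F}_P(Q)$ and $\Phi \in \mathcal{F}_Q(P)$ fixing these sets pointwise. Each generator $\theta_{q,p}$ is right $R$-linear, hence so is $\Theta$; since $\Theta$ and $\id_Q$ are two right $R$-module homomorphisms that agree on a right $R$-generating set, they agree on all of $Q$, i.e., $\Theta = \id_Q$. The symmetric argument yields $\Phi = \id_P$, establishing (FS'). The main obstacle is precisely this last step: one must combine the ad hoc output of Condition (FS) on a specific finite generating set with the $R$-linearity of the operators in $\mathcal{F}_P(Q)$ to bootstrap identity behavior from generators to all of $Q$. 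The remaining implications are essentially direct unfoldings of the definitions, made possible by the unital hypothesis.
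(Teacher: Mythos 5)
Your proposal is correct and follows essentially the same route as the paper: identifying $\Delta(1_R)=\id_Q$ and $\Gamma(1_R)=\id_P$ via unitality, using the ideal property of $\mathcal{F}_P(Q)$ in $\mathcal{L}_P(Q)$ to get (a)$\Leftrightarrow$(b), unfolding $\id_Q=\sum\theta_{q_i,p_i}$ to extract finite generating sets for (b)$\Rightarrow$(c), and using right $R$-linearity of $\Theta$ on a generating set for (c)$\Rightarrow$(a). The only cosmetic difference is that you make explicit the absorption $r\cdot\theta_{q,p}=\theta_{rq,p}$ and the ideal argument, which the paper leaves implicit.
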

\begin{proof}
(a) $\Leftrightarrow$ (b): Consider the inclusions in (\ref{eq:inc1}). If $1_R$ is the multiplicative identity element of $R$, then $\id_Q = \Delta(1_R) \in \mathcal{L}_P(Q)$ is the multiplicative identity element for the ring $\mathcal{L}_P(Q)$. First assume that $(P,Q,\psi)$ satisfies Condition (FS'). Then, $\Theta \in \mathcal{F}_P(Q)$ is a multiplicative identity element of the ring $\mathcal{L}_P(Q)$. Hence, $\Theta = \Delta(1_R) = \id_Q$ which implies that $\mathcal{L}_P(Q) = \mathcal{F}_P(Q)$. Similarly, $\Phi = \Gamma(1_R) = \id_P$ which implies that $\mathcal{L}_Q(P)=\mathcal{F}_Q(P)$.  The converse statement follows by noting that $\Delta(1_R)(q)=1_R \cdot q = q$ and $\Gamma(1_R)(p) = p \cdot 1_R = p$ for all $q \in Q$ and $p \in P$.

(b) $\Rightarrow$ (c): Assume that $\text{id}_P(Q) \in \mathcal{F}_P(Q)$ and $\text{id}_Q(P) \in \mathcal{F}_Q(P)$. By choosing $\Theta := \text{id}_P(Q)$ and $\Phi := \text{id}_Q(P)$ in Definition \ref{def:cond_fs}, we see that $(P,Q,\psi)$ satisfies Condition (FS). Furthermore, there are some $q_1, \dots, q_n \in Q$ and $p_1, \dots, p_n \in P$ such that $ \text{id}_P(Q) = \sum_{i=1}^n \Theta_{q_i, p_i}$. For any $q' \in Q$ we then have that, $$q' =\text{id}_P(Q)(q') = \sum_{i=1}^n \Theta_{q_i,p_i}(q') = \sum_{i=1}^n q_i \cdot \psi(p_i \otimes q') \in \text{Span}_R \{ q_1, \dots, q_n \}. $$ In other words, $Q$ is finitely generated as a right $R$-module by the set $\{ q_1, \dots, q_n \}$. A similar argument establishes that $P$ is finitely generated as a left $R$-module. 

(c) $\Rightarrow$ (a): Assume that $(P,Q,\psi)$ satisfies Condition (FS), $Q$ is generated as a right $R$-module by the set $\{q_1, \dots, q_n \}$ and that $P$ is generated as a left $R$-module by the set $\{p_1, \dots, p_m \}$ for some non-negative integers $n,m$ and $q_i \in Q, p_i \in P$. Let $\Theta \in \mathcal{F}_P(Q)$ and $\Phi \in \mathcal{F}_Q(P)$ be such that $\Theta(q_i) = q_i$ and $\Phi(p_j) = p_j$ for all $i \in \{ 1, \dots, n\}, j \in \{1, \dots, m\}.$ Take an arbitrary $q' \in Q$ and note that there are some $r_i \in R$ such that $q' = \sum_{i=1}^n q_i \cdot r_i$. But since $\Theta$ is a right $R$-module homomorphism, it follows that $\Theta(q') = \Theta(\sum_{i=1}^n q_i \cdot r_i) = \sum_{i=1}^n \Theta(q_i) \cdot r_i = \sum_{i=1}^n q_i \cdot r_i = q'$. A similar argument shows that $\Phi(p')=p'$ for every $p \in P$. Thus, $(P,Q,\psi)$ satisfies Condition (FS'). 
\end{proof}
\begin{remark}
At this point, we make two remarks regarding Proposition \ref{prop:fsprime_char}.
\begin{enumerate}[(a)]
\begin{item}
Note that Condition (FS) (cf. Definition \ref{def:cond_fs}) and Condition (FS') (cf. Definition \ref{def:cond_fsprime}) relates to each other similarly to how s-unital rings relate to unital rings. In Section \ref{sec:epsilon}, we will show that  Condition (FS)/Condition (FS') implies that the ideals $\mathcal{T}_i \mathcal{T}_{-i}$ are s-unital/unital for $i \geq 0$.
\end{item}
\begin{item}
In the $C^*$-setting, finite generation of the Hilbert module $E$ is equivalent to the ring of compact operators $B(E)=K(E)$ being unital. Proposition \ref{prop:fsprime_char} is the algebraic analogue of this statement.
\end{item}
\end{enumerate}
\end{remark}

The following system satisfies Condition (FS) but not Condition (FS'):

\begin{example}

Let $E$ consist of one vertex $v$ with countably infinitely many loops $f_1, f_2, \dots$. 
This is sometimes called a rose with countably many petals. 
\begin{displaymath}
	\xymatrix{
	\bullet_v \ar@(ur,ul)_{(\infty)}
	}
\end{displaymath}
The standard Leavitt path algebra system $(P,Q,\psi)$ attached to the graph $E$ satisfies Condition (FS) (see \cite[Expl. 5.8]{carlsen2011algebraic}). Furthermore, it is straightforward to check that $(P,Q,\psi)$ is a unital $R$-system with multiplicative identity element $1_R = \eta_{v}$.  
However, since $E$ contains infinitely many edges it follows that $P$ and $Q$ are not finitely generated (see Section \ref{sec:lpa} and Lemma \ref{lem:finitely_many_edges}). By Proposition \ref{prop:fsprime_char}(c), $(P,Q,\psi)$ does not satisfy Condition (FS'). In other words, $(P,Q,\psi)$ is an example of an $R$-system satisfying Condition (FS) but not Condition (FS').


\label{ex:fsprime}
\end{example}

%

 To prove that the Toeplitz ring is strongly $\mathbb{Z}$-graded, we need the following definition.


\begin{definition}
Let $R$ be a unital ring, let $(P,Q,\psi)$ be an $R$-system satisfying Condition (FS') and let $(S,T,\sigma,B)$ be a covariant representation of $(P,Q,\psi)$. Then $(S,T,\sigma,B)$ is called \emph{faithful} if $\pi_{T,S}(\Delta(1_R)) = \sigma(1_R)$.
\label{def:faithful}
\end{definition}
 To make sense of Definition \ref{def:faithful}, note that $\Delta(1_R) \in \mathcal{F}_P(Q)$ for every $R$-system satisfying Condition (FS') by Proposition \ref{prop:fsprime_char}(b). Hence, the condition $\pi_{T,S}(\Delta(1_R)) = \sigma(1_R)$ makes sense. It also follows from Proposition \ref{prop:fsprime_char}(c) that if an $R$-system $(P,Q,\psi)$ admits a faithful covariant representation, then $Q$ is finitely generated as a right $R$-module and $P$ is finitely generated as a left $R$-module. 

\smallskip

Next, we will consider a graded covariant representation and derive sufficient conditions for it to be strongly $\mathbb{Z}$-graded.

\begin{lemma}
Let $R$ be a unital ring. Suppose that $(P,Q,\psi)$ is an $R$-system and that $(S,T, \sigma, B)$ is a graded, injective, surjective and faithful representation of $(P,Q,\psi)$. Then, $$\pi_{T^n, S^n}(\Delta^{(n)}(1_R))=\sigma(1_R) = 1_B \in B_n B_{-n}$$ for every $n > 0$. 
\label{lem:faithful}
\end{lemma}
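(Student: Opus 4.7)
The proof proceeds by induction on $n$, with the base case $n=1$ being essentially a restatement of faithfulness. By Proposition \ref{prop:fsprime_char}(b), the fact that $(P,Q,\psi)$ satisfies Condition (FS') gives $\Delta(1_R) = \id_Q \in \mathcal{F}_P(Q)$, so we may fix a finite expression $\id_Q = \sum_{i=1}^N \theta_{q_i,p_i}$. Applying $\pi_{T,S}$ via Proposition \ref{prop:pi} and invoking faithfulness then yields $\sigma(1_R) = \pi_{T,S}(\Delta(1_R)) = \sum_{i=1}^N T(q_i) S(p_i)$, which lies in $B_1 B_{-1}$; moreover, $\sigma(1_R) = 1_B$ because $(P,Q,\psi)$ is a unital $R$-system (see the remark after Definition \ref{def:s-unital}).

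For the inductive step, Proposition \ref{prop:fsprime} ensures that $(P^{\otimes n}, Q^{\otimes n}, \psi_n)$ also satisfies Condition (FS'), so $\Delta^{(n)}(1_R) = \id_{Q^{\otimes n}} \in \mathcal{F}_{P^{\otimes n}}(Q^{\otimes n})$. I would next establish the explicit decomposition
\[
\id_{Q^{\otimes n}} \;=\; \sum_{i_1,\ldots,i_n} \theta_{q_{i_1} \otimes \cdots \otimes q_{i_n},\; p_{i_n} \otimes \cdots \otimes p_{i_1}},
\]
by a secondary induction: iteratively substituting $\id_Q = \sum_i \theta_{q_i,p_i}$ into each tensor factor of a generic $q_1 \otimes \cdots \otimes q_n \in Q^{\otimes n}$, using the $R$-balancing in the tensor product together with the recursive definition $\psi_{k+1}((p_1 \otimes p_2) \otimes (q_2 \otimes q_1)) = \psi(p_1 \cdot \psi_k(p_2 \otimes q_2), q_1)$. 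The asymmetric reversal of order between the $q$-factors and the $p$-factors is the main bookkeeping hazard.

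Applying $\pi_{T^n, S^n}$ to this decomposition and unfolding $T^n(q_{i_1} \otimes \cdots \otimes q_{i_n}) = T(q_{i_1}) \cdots T(q_{i_n})$ (and analogously for $S^n$) produces
\[
\pi_{T^n, S^n}(\Delta^{(n)}(1_R)) \;=\; \sum_{i_1,\ldots,i_n} T(q_{i_1}) \cdots T(q_{i_n})\, S(p_{i_n}) \cdots S(p_{i_1}).
\]
The crucial collapse is that the innermost partial sum $\sum_{i_n} T(q_{i_n}) S(p_{i_n}) = \sigma(1_R) = 1_B$ by the base case; using the unital identities $T(q) \cdot 1_B = T(q)$ and $1_B \cdot S(p) = S(p)$ (which follow from $Q,P$ being unital $R$-bimodules together with property (d) of Definition \ref{def:covariant_representation}), this absorbs the middle factor and reduces the expression to $\pi_{T^{n-1}, S^{n-1}}(\Delta^{(n-1)}(1_R))$. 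The inductive hypothesis then closes the argument.

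Finally, the membership $\sigma(1_R) \in B_n B_{-n}$ follows immediately from the explicit formula above, since $T(q_{i_1}) \cdots T(q_{i_n}) \in B_n$ and $S(p_{i_n}) \cdots S(p_{i_1}) \in B_{-n}$ by the graded structure of the representation. The main obstacle is really just the careful tracking of the order-reversal in the $p$-indices when iterating the decomposition; the rest is a clean peeling argument driven by the unital identity $\sigma(1_R) = 1_B$.
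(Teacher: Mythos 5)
Your argument is correct, and it reaches the conclusion by a mechanism that differs from the paper's in the decisive step. For the membership $1_B\in B_nB_{-n}$ the two proofs are mirror images: the paper starts from the level-one identity $1_B=\sigma(1_R)=\sum_i T(q_i)S(p_i)$ supplied by faithfulness and repeatedly \emph{inserts} $1_B$ in the middle, obtaining $1_B\in\Span_R\{T^n(q)S^n(p)\mid q\in Q^{\otimes n},\,p\in P^{\otimes n}\}\subseteq B_nB_{-n}$, whereas you \emph{collapse} inner sums; so far the content is the same. The real divergence is in how $\pi_{T^n,S^n}(\Delta^{(n)}(1_R))$ is identified with $1_B$. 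The paper never decomposes $\Delta^{(n)}(1_R)$ at level $n$: it uses Proposition \ref{prop:fsprime} and Proposition \ref{prop:fsprime_char}(b) to see that $\Delta^{(n)}(1_R)$ is the identity of $\mathcal{F}_{P^{\otimes n}}(Q^{\otimes n})$, invokes injectivity of the representation so that $\pi_{T^n,S^n}$ is a ring isomorphism onto $\Span_R\{T^n(q)S^n(p)\}$ (Proposition \ref{prop:pi}), and concludes that its image must be the identity of that subring, hence equal to $1_B$ since $1_B$ was already shown to lie in it. You instead prove the explicit operator identity $\id_{Q^{\otimes n}}=\sum_{i_1,\dots,i_n}\theta_{q_{i_1}\otimes\cdots\otimes q_{i_n},\,p_{i_n}\otimes\cdots\otimes p_{i_1}}$ in $\mathcal{F}_{P^{\otimes n}}(Q^{\otimes n})$ and push it through $\pi_{T^n,S^n}$; the order-reversal you flag is indeed the only delicate point, and it does check out against the recursive definition of $\psi_n$ together with middle $R$-linearity of the balanced tensor product. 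Your route buys a concrete formula and, notably, does not use injectivity of $\sigma$ at this step (the paper's phrasing leans on it, though a surjective ring homomorphism would already carry the identity of $\mathcal{F}_{P^{\otimes n}}(Q^{\otimes n})$ to an identity of the image ring); the paper's route buys brevity by avoiding the tensor bookkeeping entirely. Your appeal to the unital-system hypothesis for $\sigma(1_R)=1_B$ matches the standing assumptions of the section, so no gap there either.
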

\begin{proof}
Take an arbitrary $n > 0$. By Proposition \ref{prop:fsprime}, $(P^{\otimes n}, Q^{\otimes n}, \psi_n)$ satisfies Condition (FS'). This means that $\Delta^{(n)}(1_R) \in \mathcal{F}_{P^{\otimes n}}(Q^{\otimes n})$. Furthermore, by faithfulness, $\pi_{T,S}(\Delta(1_R))= \sigma(1_R) = \sum_i T(q_i) S(p_i)$ for some $q_i \in Q, p_i \in P$. Then, 
\begin{align*}
1_B &= \sigma(1_R) = \sum_i T(q_i) (1_B) S(p_i) = \sum_{i,j} T(q_i) T(q_j) S(p_j) S(p_i)  \\ &\in \Span_R \{ T^2(q) S^2(p) \mid q \in Q^{\otimes 2}, p \in P^{\otimes 2} \} \subseteq B_2 B_{-2}.
\end{align*}
By an induction argument, we get that, $$1_B \in \Span_R \{ T^n(q) S^n(p) \mid q \in Q^{\otimes n}, p \in P^{\otimes n} \} \subseteq B_n B_{-n}$$ for any $n > 0$. By Proposition \ref{prop:pi} and the assumption that the covariant representation is injective, it follows that the map $\pi_{T^n, S^n} \colon \mathcal{F}_{P^{\otimes n}}(Q^{\otimes n}) \to \Span_R \{ T(q) S(p) \mid q \in Q^{\otimes n}, p \in P^{\otimes n} \}$ is a ring isomorphism. Hence, $\pi_{T^n, S^n}(\Delta^{(n)}(1_R)) = 1_B = \sigma(1_R) \in B_{n} B_{-n}$ for $n > 0$. 
\end{proof}

\begin{lemma}
Let $R$ be a unital ring and let $(P,Q,\psi)$ be a unital $R$-system such that the map $\psi \colon P \otimes Q \to R$ is surjective. Let $(S,T,\sigma, B)$ be a surjective, graded covariant representation of $(P,Q,\psi)$. Then, $1_B \in B_{-n} B_n$ for every $n > 0$.
\label{lem:psi_surjective}
\end{lemma}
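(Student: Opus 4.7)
The plan is to prove $1_B \in B_{-n}B_n$ by induction on $n \geq 1$. Throughout, I will exploit the key identity $\sigma(\psi_n(x \otimes y)) = S^n(x) T^n(y)$ for $x \in P^{\otimes n}$, $y \in Q^{\otimes n}$, which holds since $(S^n, T^n, \sigma, B)$ is a covariant representation of $(P^{\otimes n}, Q^{\otimes n}, \psi_n)$ (by \cite[Lem. 1.5]{carlsen2011algebraic} together with property (e) of Definition \ref{def:covariant_representation}). Since $(S,T,\sigma,B)$ is graded, the element $S^n(x) T^n(y)$ lies in $B_{-n} B_n$, so it suffices to realize $1_R$ itself as a sum of values of $\psi_n$.

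For the base case $n = 1$, recall from the remark following Definition \ref{def:s-unital} that, since $(P,Q,\psi)$ is a unital $R$-system, one has $\sigma(1_R) = 1_B$. Surjectivity of $\psi$ produces finitely many $p_i \in P$ and $q_i \in Q$ such that $1_R = \sum_i \psi(p_i \otimes q_i)$. Applying $\sigma$ and property (e) gives $1_B = \sum_i S(p_i) T(q_i) \in B_{-1} B_1$.

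For the inductive step, suppose I have already shown $1_R = \sum_k \psi_n(x_k \otimes y_k)$ for some $x_k \in P^{\otimes n}$ and $y_k \in Q^{\otimes n}$. Using that $P$ is right unital, I rewrite each $p_i$ as $p_i \cdot 1_R$ and substitute the inductive formula to obtain
\begin{equation*}
1_R = \sum_i \psi(p_i \cdot 1_R \otimes q_i) = \sum_{i,k} \psi\bigl(p_i \cdot \psi_n(x_k \otimes y_k) \otimes q_i\bigr).
\end{equation*}
Recalling that $P^{\otimes (n+1)} = P \otimes P^{\otimes n}$ and $Q^{\otimes (n+1)} = Q^{\otimes n} \otimes Q$, the recursive definition
\begin{equation*}
\psi_{n+1}\bigl((p_i \otimes x_k) \otimes (y_k \otimes q_i)\bigr) = \psi\bigl(p_i \cdot \psi_n(x_k \otimes y_k), q_i\bigr)
\end{equation*}
identifies the right-hand side with $\sum_{i,k} \psi_{n+1}((p_i \otimes x_k) \otimes (y_k \otimes q_i))$. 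Applying $\sigma$ and the covariant representation identity at level $n+1$ then yields $1_B \in B_{-(n+1)} B_{n+1}$, closing the induction.

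The argument has no real obstacle: all of the work is done by surjectivity of $\psi$ and a clean unrolling of the recursive definition of $\psi_n$. The only minor point to be careful about is matching the conventions $P^{\otimes n} = P \otimes P^{\otimes (n-1)}$ and $Q^{\otimes n} = Q^{\otimes (n-1)} \otimes Q$ so that the new factor $p_i$ is inserted on the left of $P^{\otimes (n+1)}$ while $q_i$ is inserted on the right of $Q^{\otimes (n+1)}$, precisely matching the form required by the recursion for $\psi_{n+1}$.
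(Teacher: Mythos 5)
Your proof is correct and follows essentially the same route as the paper: an induction showing that $1_R$ lies in the image of $\psi_n$ for every $n \geq 1$, followed by applying $\sigma$ and the covariance identity $\sigma(\psi_n(p \otimes q)) = S^n(p)T^n(q)$ to conclude $1_B = \sigma(1_R) \in B_{-n}B_n$. The only (harmless) differences are that you track sums of simple tensors explicitly where the paper argues with a single tensor, and you insert $1_R$ on the $P$ side via right unitality while the paper lets it act trivially on the $Q$ side; these are equivalent over the balanced tensor product.
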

\begin{proof}
We prove that if $\psi \colon P \otimes Q \to R$ is surjective, then $\psi_n \colon P^{\otimes n} \otimes Q^{\otimes n} \to R$ is surjective for every $n > 1$. The proof goes by induction on $n$. Suppose that $\psi_{n-1}$ is surjective. Then there is some $p \in P^{\otimes (n-1)}$ and $q \in Q^{\otimes (n-1)}$ such that $\psi_{n-1}(p \otimes q) = 1_R$. Then, since $1_R$ acts trivially on $Q$, it follows that, $$\psi_n((p' \otimes p) \otimes (q \otimes q')) = \psi(p' \otimes \psi_{n-1}(p \otimes q) \cdot q') = \psi(p' \otimes 1_R \cdot q') = \psi(p' \otimes q') = 1_R,$$ if we choose $p'$ and $q'$ such that $\psi(p' \otimes q') = 1_R$. Thus, the claim follows from the induction principle. 

Take an arbitrary integer $n > 0$. We have that $1_R = \psi_n(p \otimes q)$ for some $p \in P^{\otimes n}$ and $q \in Q^{\otimes n}$. Hence, $\sigma(1_R) = \sigma(\psi_n (p \otimes q)) = S^n(p) T^n(q)  \in B_{-n} B_n$ which proves that $1_B = \sigma(1_R) \in B_{-n} B_n$ for every $n > 0$. 
\end{proof}

We have now found sufficient conditions for a representation ring to be strongly $\mathbb{Z}$-graded:

\begin{proposition}
Let  $R$ be a unital ring and let $(P,Q,\psi)$ be a unital $R$-system that satisfies Condition (FS'). Let $(S,T,\sigma,B)$ be an injective, surjective and graded covariant representation of $(P,Q,\psi)$. Furthermore, suppose that the following assertions hold:
\begin{enumerate}[(a)]
\begin{item}
$(S, T, \sigma, B)$ is a faithful representation of $(P,Q,\psi)$;
\end{item}
\begin{item}
$\psi$ is surjective.
\end{item}
\end{enumerate}
Then $B$ is strongly $\mathbb{Z}$-graded.
\label{prop:strong_suff}
\end{proposition}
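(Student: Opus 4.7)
The plan is to reduce the statement to the standard characterization that a unital $\mathbb{Z}$-graded ring $B$ is strongly $\mathbb{Z}$-graded if and only if $1_B \in B_i B_{-i}$ for every $i \in \mathbb{Z}$. To apply this criterion, I first need to know that $B$ is actually unital; this is immediate from the remark following Definition \ref{def:s-unital}: since $(P,Q,\psi)$ is a unital $R$-system and $\sigma \colon R \to B$ is a ring homomorphism that generates $B$ together with $S(P) \cup T(Q)$, we have $1_B = \sigma(1_R)$.

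Next, I would split the verification of $1_B \in B_i B_{-i}$ into the positive and negative parts, each handled by one of the preceding lemmas. For $i = n > 0$, Lemma \ref{lem:faithful} applies: its hypotheses are exactly the faithfulness assumption (a), injectivity and surjectivity of $(S,T,\sigma,B)$, the graded structure, and Condition (FS'), all of which are in force. This directly yields $1_B = \sigma(1_R) = \pi_{T^n,S^n}(\Delta^{(n)}(1_R)) \in B_n B_{-n}$. For $i = -n < 0$, Lemma \ref{lem:psi_surjective} applies: its hypotheses are that $(P,Q,\psi)$ is a unital $R$-system, that $\psi$ is surjective (assumption (b)), and that $(S,T,\sigma,B)$ is a surjective graded covariant representation. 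This gives $1_B \in B_{-n} B_n$ for every $n > 0$. The case $i = 0$ is trivial because $1_B \in B_0$ and $B_0$ is a unital subring, so $1_B = 1_B \cdot 1_B \in B_0 B_0$.

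Having established $1_B \in B_i B_{-i}$ for all $i \in \mathbb{Z}$, I would conclude by the standard equivalence (see for instance \cite[Prop. 1.1.1]{nastasescu2004methods}) that $B$ is strongly $\mathbb{Z}$-graded. There is no real obstacle: all the delicate work has already been packaged into Lemma \ref{lem:faithful} and Lemma \ref{lem:psi_surjective}, where Proposition \ref{prop:fsprime} (propagation of Condition (FS') to tensor powers), Proposition \ref{prop:fsprime_char}, Proposition \ref{prop:pi}, and an induction on $n$ for the surjectivity of $\psi_n$ do the technical lifting. The role of this proposition is purely to combine those two lemmas with the standard characterization of strongly graded unital rings.
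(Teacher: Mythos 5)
Your proposal is correct and follows essentially the same route as the paper: Lemma \ref{lem:faithful} handles $1_B \in B_n B_{-n}$ for $n>0$, Lemma \ref{lem:psi_surjective} handles $1_B \in B_{-n}B_n$, the identity $1_B = \sigma(1_R) \in B_0$ covers degree zero, and the standard criterion from \cite[Prop. 1.1.1]{nastasescu2004methods} finishes the argument. No gaps.
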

\begin{proof}
By assumption (a), it follows from Lemma \ref{lem:faithful} that $1_B \in B_n B_{-n}$ for every $n > 0$. By assumption (b) and Lemma \ref{lem:psi_surjective}, it follows that $1_B \in B_{-n} B_n$ for every $n > 0$. Furthermore, since $1_B = \sigma(1_R) \in B_0$, it follows that $B_0$ is a unital subring of $B$. Thus, $1_B \in \mathcal{T}_i \mathcal{T}_{-i}$ for every $i \in \mathbb{Z}$. It then follows that $B$ is strongly $\mathbb{Z}$-graded (see e.g. \cite[Prop. 1.1.1]{nastasescu2004methods}). 
\end{proof}

Note that since the Toeplitz representation $(\iota_P, \iota_Q, \iota_R, \mathcal{T}_{(P,Q,\psi)})$ is injective, surjective and graded, Proposition \ref{prop:strong_suff} gives, in particular, sufficient conditions for the Toeplitz ring to be strongly $\mathbb{Z}$-graded. 

\begin{corollary}
Let  $R$ be a unital ring and let $(P,Q,\psi)$ be a unital $R$-system that satisfies Condition (FS'). Consider the Toeplitz ring $\mathcal{T}_{(P,Q,\psi)} = \bigoplus_{i \in \mathbb{Z}} \mathcal{T}_i$. If $\pi(\Delta(1_R))=\iota_R(1_R)$ and $\psi$ is surjective, then $\mathcal{T}_{(P,Q,\psi)}$ is strongly $\mathbb{Z}$-graded.
\end{corollary}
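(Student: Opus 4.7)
The plan is to invoke Proposition \ref{prop:strong_suff} directly with the Toeplitz representation $(\iota_P, \iota_Q, \iota_R, \mathcal{T}_{(P,Q,\psi)})$ playing the role of the generic graded covariant representation $(S, T, \sigma, B)$. There is essentially no new content to be proved; the whole point is that the hypotheses in Proposition \ref{prop:strong_suff} are tailor-made for this situation, and the Toeplitz representation is the canonical object that manifestly satisfies the structural (non-faithfulness) assumptions.

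First, I would recall from Theorem \ref{thm:universal} (and the discussion preceding it) that the Toeplitz representation is, by construction, injective, surjective, and graded with respect to the canonical $\mathbb{Z}$-grading $\mathcal{T}_{(P,Q,\psi)} = \bigoplus_{i \in \mathbb{Z}} \mathcal{T}_i$ defined in \eqref{eq:grading}. Thus the hypotheses of Proposition \ref{prop:strong_suff} concerning the representation being injective, surjective, and graded are automatic in the Toeplitz setting.

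Next, I would match up the faithfulness hypothesis. Following the convention in the remark after Proposition \ref{prop:pi}, the map $\pi$ is precisely $\pi_{\iota_Q, \iota_P}$ restricted appropriately, so the given hypothesis $\pi(\Delta(1_R)) = \iota_R(1_R)$ is literally the condition $\pi_{\iota_Q, \iota_P}(\Delta(1_R)) = \iota_R(1_R)$ from Definition \ref{def:faithful}. This uses that $\Delta(1_R) \in \mathcal{F}_P(Q)$, which is ensured by Proposition \ref{prop:fsprime_char}(b) together with the assumption that $(P,Q,\psi)$ satisfies Condition (FS'). So the Toeplitz representation is faithful in the required sense.

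Finally, the assumption that $\psi$ is surjective is carried over unchanged. Applying Proposition \ref{prop:strong_suff} to $(\iota_P, \iota_Q, \iota_R, \mathcal{T}_{(P,Q,\psi)})$ yields that $\mathcal{T}_{(P,Q,\psi)}$ is strongly $\mathbb{Z}$-graded. I do not anticipate any genuine obstacle here; the only thing one has to be slightly careful about is the notational identification of $\pi$ in the hypothesis with the specialization $\pi_{\iota_Q, \iota_P}$ appearing in Definition \ref{def:faithful}, but this is precisely the convention fixed after Proposition \ref{prop:pi}.
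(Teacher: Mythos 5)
Your proposal is correct and matches the paper's argument exactly: the paper states the corollary as an immediate consequence of Proposition \ref{prop:strong_suff} applied to the injective, surjective and graded Toeplitz representation, with $\pi(\Delta(1_R))=\iota_R(1_R)$ supplying faithfulness in the sense of Definition \ref{def:faithful}.
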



The requirement of faithfulness is more easily formulated when considering the relative Cuntz-Pimsner representations.

\begin{corollary}
Let $R$ be a unital ring and let $(P,Q,\psi)$ be a unital $R$-system that satisfies Condition (FS'). Let $J \subseteq R$ be a $\psi$-compatible ideal. Furthermore, suppose that the following assertions hold:
\begin{enumerate}[(a)]
\begin{item}
 $1_R \in J$;
\end{item}
\begin{item} 
$\psi$ is surjective.
\end{item}
\end{enumerate} 
Then the relative Cuntz-Pimsner ring $\mathcal{O}_{(P,Q,\psi)}(J)$ is strongly $\mathbb{Z}$-graded.
\label{cor:cuntz_strongly}
\end{corollary}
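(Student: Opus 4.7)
The plan is to apply Proposition \ref{prop:strong_suff} to the relative Cuntz-Pimsner representation $(\iota_Q^J, \iota_P^J, \iota_R^J, \mathcal{O}_{(P,Q,\psi)}(J))$. This representation is graded and surjective by construction, and assumption (b) supplies the surjectivity of $\psi$. The only hypothesis that requires real work is faithfulness in the sense of Definition \ref{def:faithful}, namely the identity $\pi_{\iota_Q^J, \iota_P^J}(\Delta(1_R)) = \iota_R^J(1_R)$ inside $\mathcal{O}_{(P,Q,\psi)}(J)$.

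To verify faithfulness, I would combine assumption (a) with the defining invariance property of the relative Cuntz-Pimsner representation. First, Condition (FS') together with Proposition \ref{prop:fsprime_char}(b) guarantees that $\Delta(1_R) \in \mathcal{F}_P(Q)$, so the expression $\pi_{\iota_Q^J, \iota_P^J}(\Delta(1_R))$ is well-defined. Second, the representation $(\iota_Q^J, \iota_P^J, \iota_R^J, \mathcal{O}_{(P,Q,\psi)}(J))$ is by definition invariant relative to $J$, so $\pi_{\iota_Q^J, \iota_P^J}(\Delta(x)) = \iota_R^J(x)$ for every $x \in J$. Since $1_R \in J$ by (a), substituting $x = 1_R$ gives precisely the faithfulness equation. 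Hence, Proposition \ref{prop:strong_suff} delivers the conclusion.

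The main (mild) obstacle I anticipate is the injectivity hypothesis in Proposition \ref{prop:strong_suff}: as $1_R \in J$ forces $J = R$, injectivity of $\iota_R^J$ would amount to $\ker \Delta = \{0\}$, which is not automatic from our assumptions. However, reading the proof of Lemma \ref{lem:faithful} shows that injectivity of $\sigma$ is only used to identify $\pi_{T^n, S^n}(\Delta^{(n)}(1_R))$ with $1_B$ via the ring isomorphism furnished by Proposition \ref{prop:pi}; the membership $1_B \in B_n B_{-n}$, which is the only output actually needed in Proposition \ref{prop:strong_suff}, follows from the inductive substitution $1_B = \sum_i T(q_i) \cdot 1_B \cdot S(p_i) = \sum_{i,j} T(q_i) T(q_j) S(p_j) S(p_i)$ using only faithfulness and the unital-bimodule structure of $Q$. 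This, together with Lemma \ref{lem:psi_surjective} (which yields $1_B \in \mathcal{O}_{-n} \mathcal{O}_n$ from the surjectivity of $\psi$), gives $1_{\mathcal{O}} \in \mathcal{O}_n \mathcal{O}_{-n} \cap \mathcal{O}_{-n} \mathcal{O}_n$ for every $n > 0$, so that $\mathcal{O}_{(P,Q,\psi)}(J)$ is strongly $\mathbb{Z}$-graded.
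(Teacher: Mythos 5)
Your proposal is correct and follows essentially the same route as the paper: the paper also applies Proposition \ref{prop:strong_suff} to the relative Cuntz-Pimsner representation, deducing faithfulness (Definition \ref{def:faithful}) from assumption (a) via the invariance relation $\pi_{\iota_Q^J,\iota_P^J}(\Delta(x)) = \iota_R^J(x)$ for $x \in J$, and then invoking (b). The one place you diverge is the injectivity hypothesis of Proposition \ref{prop:strong_suff}: the paper simply recalls that the relative Cuntz-Pimsner representation is injective, surjective and graded, whereas you note that injectivity of $\iota_R^J$ is not obvious here (with $1_R \in J$ one has $J = R$, and nothing in the hypotheses forces $J$ to be faithful, i.e.\ $\ker\Delta = \{0\}$) and instead check that injectivity is dispensable: in the proof of Lemma \ref{lem:faithful} it is used only to identify $\pi_{T^n,S^n}(\Delta^{(n)}(1_R))$ with $1_B$, while the membership $1_B \in B_nB_{-n}$ needed in Proposition \ref{prop:strong_suff} follows from faithfulness alone, and Lemma \ref{lem:psi_surjective} never uses injectivity. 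That observation is accurate and, if anything, makes your argument more robust than the paper's one-line appeal to injectivity for a general $\psi$-compatible ideal $J$; so your extra care closes a point the paper passes over rather than introducing any gap.
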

\begin{proof}
Recall that the Cuntz-Pimsner representation $(\iota_P^J, \iota_Q^J, \iota_R^J, \mathcal{O}_{(P,Q,\psi)}(J))$ is injective, surjective and graded. Furthermore, note that (a) implies that the identity $\iota_R^J(1_R) = \pi_{\iota_Q^J, \iota_P^J}(\Delta(1_R))$ holds in the Cuntz-Pimsner ring. This implies that the representation $(\iota_P^{J}, \iota_Q^{J}, \iota_R^{J}, \mathcal{O}_{(P,Q,\psi)}(J))$ is faithful. By Proposition \ref{prop:strong_suff} and (b), we have that $\mathcal{O}_{(P,Q,\psi)}(J)$ is strongly $\mathbb{Z}$-graded.
\end{proof}

For the rest of this section, we apply the above theorems to the special cases of Leavitt path algebras and corner skew Laurent polynomial rings. We begin by proving that the conditions in Corollary \ref{cor:cuntz_strongly} are satisfied for any Leavitt path algebra associated to a finite graph without sinks. 

\begin{remark}
The Leavitt path algebra of a graph $E$ is the Cuntz-Pimsner ring relative to the ideal  $J$ generated by the regular vertices $\text{Reg}(E)\subseteq E^0$. In other words, $L_K(E) \cong_{\text{gr}} \mathcal{O}_{(P,Q,\psi)}(J)$ where $(P,Q,\psi)$ is the standard Leavitt path algebra system associated to $E$ (see \cite[Expl. 5.8]{carlsen2011algebraic} and Section \ref{sec:lpa}). Suppose that $E$ is a finite graph without any sinks. We now prove that the conditions (a) and (b) in Corollary \ref{cor:cuntz_strongly} are satisfied.
\begin{enumerate}[(a)]
\begin{item}
Since a singular vertex (non-regular vertex) is either an infinite emitter or a sink, by the requirements on $E$, it follows that $\text{Reg}(E) = E^0$. This implies that $J=R$ and hence that $1_R = \sum_{v \in E^0} \eta_v \in J$. 
\end{item}
\begin{item}
Since $E$ does not contain any sinks, we have that for any $v \in E^0$ there is some $f \in E^1$ such that $r(f) = v$. Thus, $\eta_v = \eta_{r(f)} = \psi(\eta_{f^*} \otimes \eta_f)$. This proves that $\psi$ is surjective. 
\end{item}
\end{enumerate}
\label{rem:aa1}
\end{remark}

Compare the following lemma with Example \ref{ex:fsprime}:

\begin{lemma}
Let $K$ be a unital ring and let $E$ be a directed graph with finitely many vertices. Then the standard Leavitt path algebra system $(P,Q,\psi)$ is a unital $R$-system. Furthermore, $(P,Q,\psi)$ satisfies Condition (FS') if and only if $E$ has finitely many edges. 
\label{lem:finitely_many_edges}
\end{lemma}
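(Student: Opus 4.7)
The plan is to first show the unital structure, then reduce Condition (FS') to finite generation of $Q_R$ and $_R P$ via Proposition \ref{prop:fsprime_char}(c), and finally characterize that finite generation in terms of $E^1$.

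First, I would verify unitality. Since $E^0$ is finite, we may form the element $1_R := \sum_{v \in E^0} \eta_v \in R$. A direct calculation using the defining rule $\eta_u \eta_v = \delta_{u,v} \eta_v$ shows that $1_R$ is a multiplicative identity for $R$, so $R$ is unital. To see that $Q$ is a unital $R$-bimodule, it suffices (by $K$-linearity) to compute on a basis element $\eta_f$; from the formulas $\eta_v \cdot \eta_f = \delta_{v, s(f)} \eta_f$ and $\eta_f \cdot \eta_v = \delta_{v, r(f)} \eta_f$ we obtain $1_R \cdot \eta_f = \eta_{s(f)} \cdot \eta_f = \eta_f$ and $\eta_f \cdot 1_R = \eta_{r(f)} = \eta_f$. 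The same calculation with $\eta_{f^\ast}$ in place of $\eta_f$ shows that $P$ is a unital $R$-bimodule. Hence $(P,Q,\psi)$ is a unital $R$-system.

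For the equivalence, recall from \cite[Expl.~5.8]{carlsen2011algebraic} that the standard Leavitt path algebra system satisfies Condition (FS). By Proposition \ref{prop:fsprime_char}(c), Condition (FS') therefore holds if and only if $Q$ is finitely generated as a right $R$-module and $P$ is finitely generated as a left $R$-module. If $E^1$ is finite, then $\{\eta_f \mid f \in E^1\}$ is a (finite) right $R$-generating set for $Q$ since $Q$ is by definition the $K$-span of these elements and $K$ embeds into $R$ via $k \mapsto k \cdot 1_R$. An analogous statement holds for $P$, so Condition (FS') follows.

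For the converse, I would contrapose: suppose $E^1$ is infinite, and show $Q_R$ cannot be finitely generated (the argument for $_R P$ being identical). Suppose, for contradiction, that $q_1, \ldots, q_n \in Q$ generate $Q$ as a right $R$-module. Each $q_i$ is a finite $K$-linear combination of basis elements $\eta_f$, so the finite set $F := \bigcup_{i=1}^n \{f \in E^1 \mid \eta_f \text{ appears in } q_i\} \subseteq E^1$ is well-defined. The key observation is that for any edge $f \in E^1$ and vertex $v \in E^0$ we have $\eta_f \cdot \eta_v = \delta_{v,r(f)} \eta_f \in K \eta_f$, whence $\eta_f \cdot R \subseteq K \eta_f$. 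Consequently the right $R$-submodule generated by $q_1, \ldots, q_n$ is contained in $\bigoplus_{f \in F} K \eta_f$. Since $F$ is finite but $E^1$ is infinite, this is a proper submodule of $Q$, contradicting the assumption that the $q_i$ generate.

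The main (minor) obstacle is just being careful with the right action formula, so that one verifies $\eta_f \cdot R \subseteq K\eta_f$ correctly and concludes that right $R$-spans cannot introduce new basis vectors $\eta_f$; everything else is bookkeeping.
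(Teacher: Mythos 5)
Your proof is correct and follows essentially the same route as the paper: deduce unitality from finiteness of $E^0$, invoke Proposition \ref{prop:fsprime_char}(c) together with the fact that the standard system satisfies Condition (FS), and reduce to finite generation of $Q_R$ and $_R P$, which holds precisely when $E^1$ is finite. The only difference is that you spell out the finite-generation equivalence (via the observation $\eta_f \cdot R \subseteq K\eta_f$), which the paper leaves as immediate from the definitions.
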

\begin{proof}
Recall that the standard Leavitt path algebra system (see Section \ref{sec:lpa}) is defined by $P = \bigoplus_{f \in E^1} K \eta_{f^*}$ and $Q=\bigoplus_{f \in E^1} K\eta_f$. The assumption that $E$ has finitely many vertices implies that $R$ is a unital ring and that $(P,Q,\psi)$ is a unital $R$-system. By Proposition \ref{prop:fsprime_char}(c), $(P,Q,\psi)$ satisfies Condition (FS') if and only if $(P,Q,\psi)$ satisfies Condition (FS), (i) $Q$ is finitely generated as a right $R$-module and (ii) $P$ is finitely generated as a left $R$-module. However, the $R$-system $(P,Q,\psi)$ always satisfies Condition (FS) (see \cite[Expl. 5.8]{carlsen2011algebraic}). Moreover, it follows from the definition of $P$ and $Q$ that (i) and (ii) hold if and only if $E$ has finitely many edges. 
\end{proof}


We can now partially recover a result obtained by Hazrat on when a Leavitt path algebra of a finite graph is strongly $\mathbb{Z}$-graded (see \cite[Thm. 3.15]{hazrat2013graded}).

\begin{corollary}
Let $K$ be a unital ring and let $E$ be a finite graph without any sinks. Then the Leavitt path algebra $L_K(E)$ is strongly $\mathbb{Z}$-graded.
\label{cor:lpa_strong}
\end{corollary}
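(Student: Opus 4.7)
The plan is to obtain this as a direct application of Corollary \ref{cor:cuntz_strongly} to the standard Leavitt path algebra system $(P,Q,\psi)$ associated to $E$. Recall from Section \ref{sec:lpa} that $L_K(E) \cong_{\text{gr}} \mathcal{O}_{(P,Q,\psi)}(J)$ where $J$ is the ideal of $R=\bigoplus_{v\in E^0} K\eta_v$ generated by the regular vertices $\mathrm{Reg}(E)$. Thus it suffices to verify the hypotheses of Corollary \ref{cor:cuntz_strongly} in this setting.

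First, since $E$ is finite, it has finitely many vertices and finitely many edges. By Lemma \ref{lem:finitely_many_edges}, this implies that $R$ is unital, that $(P,Q,\psi)$ is a unital $R$-system, and that $(P,Q,\psi)$ satisfies Condition (FS'). Second, we must check that the $\psi$-compatible ideal $J$ satisfies $1_R \in J$ and that $\psi$ is surjective; both of these are precisely the verifications made in Remark \ref{rem:aa1}, where the absence of sinks ensures that every vertex is regular (so $J = R \ni 1_R$) and that every $\eta_v$ lies in the image of $\psi$ (via any edge $f$ with $r(f)=v$).

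With all hypotheses in place, Corollary \ref{cor:cuntz_strongly} yields that the relative Cuntz-Pimsner ring $\mathcal{O}_{(P,Q,\psi)}(J)$ is strongly $\mathbb{Z}$-graded. Transporting this along the graded isomorphism $L_K(E) \cong_{\text{gr}} \mathcal{O}_{(P,Q,\psi)}(J)$ gives the desired conclusion.

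In essence, the proof is a bookkeeping argument: the real work has already been done in Corollary \ref{cor:cuntz_strongly}, Lemma \ref{lem:finitely_many_edges}, and Remark \ref{rem:aa1}. There is no genuine obstacle here; the only minor point to be careful about is ensuring that the finiteness of $E$ supplies both of the separate hypotheses needed by Lemma \ref{lem:finitely_many_edges} (finite vertex set for the unital $R$-system, finite edge set for Condition (FS')), and that the no-sinks hypothesis is what delivers both conditions in Remark \ref{rem:aa1}.
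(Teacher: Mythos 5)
Your proof is correct and follows exactly the paper's argument: invoke Lemma \ref{lem:finitely_many_edges} for unitality of the system and Condition (FS'), Remark \ref{rem:aa1} for $1_R \in J$ and surjectivity of $\psi$, and then apply Corollary \ref{cor:cuntz_strongly} together with $L_K(E) \cong_{\text{gr}} \mathcal{O}_{(P,Q,\psi)}(J)$. Your write-up simply spells out the bookkeeping that the paper's one-line proof leaves implicit.
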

\begin{proof}
By Lemma \ref{lem:finitely_many_edges}, Remark \ref{rem:aa1} and Corollary \ref{cor:cuntz_strongly} it follows that $L_K(E) \cong_{\text{gr}} \mathcal{O}_{(P,Q,\psi)}(J)$ is strongly $\mathbb{Z}$-graded.
\end{proof}

We will now consider corner skew Laurent polynomial rings. Recall that we need to specify a unital ring $R$, an idempotent $e \in R$ and a corner isomorphism $\alpha \colon R \to eRe$. Moreover, recall that an idempotent $e \in R$ is called \emph{full} if $ReR = R$. Hazrat showed (see \cite[Prop. 1.6.6]{hazrat2016graded}) that $R[t_{+}, t_{-}; \alpha]$ is strongly $\mathbb{Z}$-graded if and only if $e$ is a full idempotent. 

\begin{corollary}
Let $R$ be a unital ring and let $\alpha \colon R \to eRe$ be a ring isomorphism where $e$ is an idempotent of $R$. The corner skew Laurent polynomial ring $R[t_{+}, t_{-}; \alpha]$ is strongly $\mathbb{Z}$-graded if $e$ is a full idempotent.
\label{cor:fractional_strong}
\end{corollary}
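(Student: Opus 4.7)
The plan is to invoke Corollary \ref{cor:cuntz_strongly} applied to the realization of $R[t_{+}, t_{-}; \alpha]$ as a Cuntz-Pimsner ring given by Carlsen and Ortega in \cite[Expl. 5.7]{carlsen2011algebraic}. Concretely, the natural $R$-system $(P,Q,\psi)$ associated to the corner isomorphism $\alpha$ can be taken with $Q$ modelled on $t_{-}R$ and $P$ modelled on $Rt_{+}$, with the $R$-bimodule actions induced by multiplication in $R[t_{+}, t_{-}; \alpha]$ (and the map $\alpha$ appearing on the ``twisted'' sides), and with $\psi$ given by $\psi(rt_{+} \otimes t_{-}s) = rt_{+}t_{-}s = res$, so that $\mathrm{Im}(\psi) = ReR$.

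First I would verify that $(P,Q,\psi)$ is a unital $R$-system. Since $R$ is unital and since $t_{-} = t_{-}e$, $t_{+} = et_{+}$ are easily seen to imply $1_{R} \cdot q = q$, $q \cdot 1_{R} = q$ for all $q \in Q$ and analogously for $P$, this step is routine. Next, I would check Condition (FS'). By Proposition \ref{prop:fsprime_char}(c), it suffices to show that $(P,Q,\psi)$ satisfies Condition (FS) and that $Q$ is finitely generated as a right $R$-module while $P$ is finitely generated as a left $R$-module. The latter is immediate: $Q = t_{-}R$ is cyclic as a right $R$-module (generated by $t_{-}$) and $P = Rt_{+}$ is cyclic as a left $R$-module (generated by $t_{+}$). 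For Condition (FS), a direct computation shows $\theta_{t_{-},t_{+}} = \mathrm{id}_{Q}$ (using $t_{+}t_{-} = e$ together with $t_{-}e = t_{-}$) and similarly $\theta_{t_{+},t_{-}} = \mathrm{id}_{P}$, so the identity operators lie in $\mathcal{F}_{P}(Q)$ and $\mathcal{F}_{Q}(P)$ respectively.

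Then I would identify the relevant $\psi$-compatible ideal. Since $\mathrm{id}_{Q} = \Delta(1_{R}) \in \mathcal{F}_{P}(Q)$ and $\mathrm{id}_{P} = \Gamma(1_{R}) \in \mathcal{F}_{Q}(P)$ by Proposition \ref{prop:fsprime_char}(b), the whole ring $R$ is $\psi$-compatible, so I may take $J = R$, in which case $1_{R} \in J$ trivially. (One may also check that $\Delta$ is injective, since $rt_{-} = 0$ forces $r = rt_{-}t_{+} = 0$, so $J = R$ is faithful and the Cuntz-Pimsner ring is well-defined.) Hypothesis (a) of Corollary \ref{cor:cuntz_strongly} is therefore satisfied.

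Finally, I would exploit the hypothesis that $e$ is full precisely at the point where we need $\psi$ to be surjective. From the explicit description above, $\mathrm{Im}(\psi) = ReR$, so $\psi$ is surjective if and only if $ReR = R$, i.e.\ if and only if $e$ is a full idempotent. Thus hypothesis (b) of Corollary \ref{cor:cuntz_strongly} holds under our assumption, and the corollary yields that $\mathcal{O}_{(P,Q,\psi)}(J) \cong_{\mathrm{gr}} R[t_{+}, t_{-}; \alpha]$ is strongly $\mathbb{Z}$-graded. The main obstacle is essentially bookkeeping: one must be attentive when translating the relations $t_{-}t_{+} = 1$, $t_{+}t_{-} = e$, $rt_{-} = t_{-}\alpha(r)$, $t_{+}r = \alpha(r)t_{+}$ into the $R$-bimodule structure on $P$ and $Q$ in order to see cleanly that $\mathrm{Im}(\psi) = ReR$, but no serious difficulty arises beyond that.
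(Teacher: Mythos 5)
Your proposal is correct and follows essentially the same route as the paper: realize $R[t_{+}, t_{-}; \alpha]$ as the relative Cuntz--Pimsner ring of the Carlsen--Ortega system from \cite[Expl. 5.7]{carlsen2011algebraic} with $J=R$, verify Condition (FS') via Proposition \ref{prop:fsprime_char}, and use fullness of $e$ precisely to make $\psi$ surjective so that Corollary \ref{cor:cuntz_strongly} applies. The only cosmetic difference is that you establish Condition (FS) by exhibiting $\theta_{t_{-},t_{+}}=\id_Q$ and $\theta_{t_{+},t_{-}}=\id_P$ directly (which already yields (FS') through Proposition \ref{prop:fsprime_char}(b)), whereas the paper cites Condition (FS) from Carlsen--Ortega and combines it with finite generation of $P$ and $Q$ via Proposition \ref{prop:fsprime_char}(c).
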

\begin{proof}
Let $(P,Q,\psi)$ denote the $R$-system in \cite[Expl. 5.6]{carlsen2011algebraic}, i.e. let, $$P = \Big \{ \sum r_i \alpha(r_i') \mid r_i, r_i' \in R \Big \}, \quad Q = \Big \{ \sum \alpha(r_i) r_i' \mid r_i, r_i' \in R \Big \}, \quad \psi(p \otimes q) = pq,$$ where the left and right actions of $R$ on $P$ and $Q$ are defined by $r \cdot r_1 \alpha(r_2) = r r_1 \alpha(r_2)$, $r_1 \alpha(r_2) \cdot r = r_1 \alpha(r_2 r)$, $r \cdot \alpha(r_1) r_2 = \alpha(r r_1) r_2$, $\alpha(r_1) r_2 \cdot r = \alpha(r_1) r_2 r$ for all $r, r_1, r_2 \in R$. By \cite[Expl. 5.7]{carlsen2011algebraic}, the $R$-system $(P,Q,\psi)$ satisfies Condition (FS). Assume that $e$ is a full idempotent. Then, $$\text{Im}(\psi) = P Q = (R eR e) (e R e R) = R eR (e e ) R e R = (R e R) e (R e R) = R e R = R.$$ Hence, $\psi$ is surjective.  Furthermore, note that $_R P = {_R} ( R e R e) =  {_R} (R e R)e = {_R} R e$ as left $R$-modules. It follows that $P$ is finitely generated as a left $R$-module. Similarly, $Q _R = ( e R e R)_R = e R _R$ is finitely generated as a right $R$-module. By Proposition \ref{prop:fsprime_char}(c), it follows that $(P,Q,\psi)$ satisfies Condition (FS').  Recall from \cite[Expl. 5.7]{carlsen2011algebraic} that $J= R$ is $\psi$-compatible and $R[t_{+}, t_{-}; \alpha] \cong_{\text{gr}} \mathcal{O}_{(P,Q,\psi)}(J)$. By Corollary \ref{cor:cuntz_strongly}, it follows that $\mathcal{O}_{(P,Q,\psi)}(J)$ is strongly $\mathbb{Z}$-graded. Thus, $R[t_{+}, t_{-}; \alpha]$ is strongly $\mathbb{Z}$-graded.
\end{proof}

\section{Epsilon-strongly $\mathbb{Z}$-graded Cuntz-Pimsner rings}
\label{sec:epsilon}

We will show that Condition (FS) and Condition (FS') correspond to local unit properties of the rings $\mathcal{T}_i \mathcal{T}_{-i}$ for $i > 0$. This allows us to find sufficient conditions for certain representation rings to be nearly epsilon-strongly and epsilon-strongly $\mathbb{Z}$-graded.

\begin{proposition}
Let $R$ be an s-unital ring and let $(P,Q,\psi)$ be an s-unital $R$-system that satisfies Condition (FS). Consider the Toeplitz ring $\mathcal{T}_{(P,Q,\psi)}=\bigoplus_{i \in \mathbb{Z}} \mathcal{T}_i$. The following assertions hold:
\begin{enumerate}[(a)]
\begin{item}
For $i \geq 0$, $\mathcal{T}_i$ is a left s-unital $\mathcal{T}_i \mathcal{T}_{-i}$-module;
\end{item}
\begin{item}
For $i \geq 0$, $\mathcal{T}_{-i}$ is a right s-unital $\mathcal{T}_i \mathcal{T}_{-i}$-module;
\end{item}
\begin{item}
$\mathcal{T}_i \mathcal{T}_{-i}$ is an s-unital ring for $i \geq 0$;
\end{item}
\begin{item}
$\mathcal{T}_i = \mathcal{T}_i \mathcal{T}_{-i} \mathcal{T}_i $ for every $i \in \mathbb{Z}$. 
\end{item}
\end{enumerate}
\label{prop:nearly_toeplitz}
\end{proposition}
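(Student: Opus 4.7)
The plan is to establish (a), obtain (b) by a symmetric argument, deduce (c) from (a) and (b) using the standard one-sided-to-two-sided trick for s-unital rings, and derive (d) as an immediate consequence of (a) and (b).

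I would prove (a) by reducing an element $x \in \mathcal{T}_i$ to a finite sum of spanning monomials $\iota_{Q^{\otimes m_l}}(q_l) \iota_{P^{\otimes n_l}}(p_l)$ with $m_l - n_l = i$ and each $q_l$, $p_l$ a pure tensor. For $i > 0$ I would split $q_l = q_l^{(1)} \otimes q_l^{(2)}$ with $q_l^{(1)} \in Q^{\otimes i}$ and $q_l^{(2)} \in Q^{\otimes n_l}$, invoke the tensor-power version of Condition (FS) (\cite[Lem.~3.8]{carlsen2011algebraic}) to obtain $\Theta \in \mathcal{F}_{P^{\otimes i}}(Q^{\otimes i})$ fixing every $q_l^{(1)}$, and set $r := \pi(\Theta)$. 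Proposition~\ref{prop:pi} places $r$ in $\pi(\mathcal{F}_{P^{\otimes i}}(Q^{\otimes i})) \subseteq \mathcal{T}_{(i,0)}\mathcal{T}_{(0,i)} \subseteq \mathcal{T}_i \mathcal{T}_{-i}$, while equation~\eqref{eq:2.9a} applied to $(P^{\otimes i}, Q^{\otimes i}, \psi_i)$ gives $r \iota_{Q^{\otimes i}}(q_l^{(1)}) = \iota_{Q^{\otimes i}}(q_l^{(1)})$, and hence $rx = x$.

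The $i = 0$ case needs a separate construction since $\mathcal{T}_{(0,0)}$-terms of the form $\iota_R(r_l)$ require different treatment than the $\iota_{Q^{\otimes m_l}}(q_l)\iota_{P^{\otimes m_l}}(p_l)$ with $m_l \geq 1$. I would apply Remark~\ref{rem:s-unital} to the finite subset $\{r_l\} \cup \{q_{l,1}\}$ inside the left s-unital $R$-module $R \oplus Q$ to produce a common local left unit $r \in R$; the identity $\iota_R(r)\iota_Q(q') = \iota_Q(rq')$ then yields $\iota_R(r) x = x$. A second invocation of s-unitality of $R$ supplies $r' \in R$ with $r'r = r$, so $\iota_R(r) = \iota_R(r')\iota_R(r) \in \mathcal{T}_0 \cdot \mathcal{T}_0 = \mathcal{T}_0 \mathcal{T}_0$, as required. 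Claim (b) is handled symmetrically using $\Phi \in \mathcal{F}_{Q^{\otimes i}}(P^{\otimes i})$ and the right s-unitality of $P_R$. For (c), given $x = \sum_k y_k z_k \in \mathcal{T}_i \mathcal{T}_{-i}$, applying (a) with Remark~\ref{rem:s-unital} to $\{y_k\}$ yields some $u_1 \in \mathcal{T}_i \mathcal{T}_{-i}$ with $u_1 x = x$, and (b) similarly gives $u_2$ with $xu_2 = x$; then $u := u_1 + u_2 - u_2 u_1 \in \mathcal{T}_i \mathcal{T}_{-i}$ satisfies $ux = xu = x$. Part (d) is immediate: for $i \geq 0$ any $x \in \mathcal{T}_i$ has $x = rx \in \mathcal{T}_i \mathcal{T}_{-i} \cdot \mathcal{T}_i$ by (a), and for $i < 0$ the symmetric statement from (b) applied with index $-i$ concludes.

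The main obstacle is organizational rather than conceptual: tracking the grading degrees carefully to ensure that the constructed local units actually lie in the smaller ideal $\mathcal{T}_i \mathcal{T}_{-i}$ rather than merely in $\mathcal{T}_0$. For $i > 0$ this is automatic from the range formula in Proposition~\ref{prop:pi}, but for $i = 0$ it is precisely what forces the double application of s-unitality of $R$ to pass from $\iota_R(r) \in \mathcal{T}_0$ to $\iota_R(r) \in \mathcal{T}_0 \mathcal{T}_0$.
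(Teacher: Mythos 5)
Your proposal is correct and follows essentially the same route as the paper's proof: decompose elements of $\mathcal{T}_i$ into monomials, use Condition (FS) for $(P^{\otimes i},Q^{\otimes i},\psi_i)$ together with Proposition \ref{prop:pi} and (\ref{eq:2.9a}) to produce a left local unit in $\mathcal{T}_i\mathcal{T}_{-i}$ for $i>0$, use s-unitality of the system for $i=0$, argue (b) symmetrically, and deduce (c) and (d) via Remark \ref{rem:s-unital}. Your extra step placing $\iota_R(r)$ in $\mathcal{T}_0\mathcal{T}_0$ via a second application of s-unitality of $R$ is a small detail the paper leaves implicit, and is a welcome refinement rather than a divergence.
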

\begin{proof}
(a): Take an arbitrary integer $i \geq 0$ and an element $s \in \mathcal{T}_i$. Then, $ s = \sum_j \iota_{Q^{\otimes m_j}}(q_j) \iota_{P^{\otimes n_j}}(p_j)$ for some non-negative integers $\{ m_j \}, \{n_j\}$ and elements $q_j \in Q^{\otimes m_j}, p_j \in P^{\otimes n_j}$. Note that $m_j - n_j = i$ for all indices $j$. Furthermore, since $i$ is non-negative, we have that $0 \leq i \leq m_j$ for all $j$. We will construct an element $\epsilon(s)$ such that $\epsilon(s) s = s$. 

If $i=0$, then by the assumption that $(P,Q,\psi)$ is an s-unital $R$-system and Remark \ref{rem:s-unital}, we can find some element $r \in R$ such that $r \cdot q_j = q_j$ for all $j$. Put $\epsilon(s) := \iota_R(r) \in \mathcal{T}_0$. Then, 
\begin{align*}
\epsilon(s) s &= \iota_R(r) \sum_j \iota_{Q^{\otimes m_j}}(q_j) \iota_{P^{\otimes n_j}}(p_j) = \sum_j \iota_{Q^{\otimes m_j}}(r \cdot q_j) \iota_{P^{\otimes n_j}}(p_j) \\ &= \sum_j \iota_{Q^{\otimes m_j}}(q_j) \iota_{P^{\otimes n_j}}(p_j) = s.
\end{align*}

If $i > 0$, then let $q_j'$ denote the $i$th initial segment of $q_j$ for every $j$. In other words, for every $j$ we have that $q_j = q_j' \otimes q_j''$ where $q_j' \in Q^{\otimes i}$ and $q_j'' \in Q^{\otimes (m_j-i)}$. Since $(P,Q,\psi)$ satisfies Condition (FS), it follows by \cite[Lem. 3.8]{carlsen2011algebraic} that $(P^{\otimes i}, Q^{\otimes i}, \psi_i)$ satisfies Condition (FS). Therefore, there is some $\Theta \in \mathcal{F}_{P^{\otimes i}}(Q^{\otimes i})$ such that $\Theta(q_j') = q_j'$ for all $j$. Invoking Proposition \ref{prop:pi}, we put $\epsilon(s) :=  \pi_{\iota_{Q^{\otimes i}}, \iota_{P^{\otimes i}}}(\Theta).$ By Proposition \ref{prop:pi} and (\ref{eq:grading}), we have that, $$\pi_{\iota_{Q^{\otimes i}}, \iota_{P^{\otimes i}}}(\Theta) \in \Span_R \{ \iota_{Q^{\otimes i}}(q) \iota_{P^{\otimes i}}(p) \mid q \in Q^{\otimes i}, p \in P^{\otimes i} \} \subseteq \mathcal{T}_i \mathcal{T}_{-i}.$$ Furthermore, by using the left relation of (\ref{eq:2.9a}),
 
\begin{align*}
\epsilon(s) s &= \pi(\Theta) \sum_j \iota_{Q^{\otimes m_j}}(q_j) \iota_{P^{\otimes n_j}}(p_j) = \pi(\Theta) \sum_j \iota_{Q^{\otimes i}}(q_j') \iota_{Q^{\otimes (m_j-i)}}(q_j'') \iota_{P^{\otimes n_j}}(p_j) \\
&= \sum_j ( \pi(\Theta) \iota_{Q^{\otimes i}}(q_j') ) \iota_{Q^{\otimes (m_j-i)}}(q_j'') \iota_{P^{\otimes n_j}}(p_j) = \sum_j (\iota_{Q^{\otimes i}}(\Theta(q_j')) ) \iota_{Q^{\otimes (m_j-i)}}(q_j'') \iota_{P^{\otimes n_j}}(p_j) \\
&= \sum_j (\iota_{Q^{\otimes i}}(q_j') ) \iota_{Q^{\otimes (m_j-i)}}(q_j'') \iota_{P^{\otimes n_j}}(p_j) = \sum_j \iota_{Q^{\otimes m_j}}(q_j) \iota_{P^{\otimes n_j}}(p_j) = s.
\end{align*}

(b): Analogous to (a)

(c): Let $i \geq 0$ be an arbitrary non-negative integer. Any element of $\mathcal{T}_i \mathcal{T}_{-i}$ is a finite sum $s = \sum_j a_j b_j$ where $a_j \in \mathcal{T}_i$ and $b_j \in \mathcal{T}_{-i}$. Since $\mathcal{T}_i$ is a left s-unital $\mathcal{T}_i \mathcal{T}_{-i}$-module by (a), Remark \ref{rem:s-unital} implies that we can find some element $t_1 \in \mathcal{T}_i \mathcal{T}_{-i}$ such that $t_1 a_j = a_j$ for all indices $j$. Similarly, (b) and Remark \ref{rem:s-unital} implies that there is some element $t_2 \in \mathcal{T}_i \mathcal{T}_{-i}$ such that $b_j t_2 = b_j$ for all indices $j$. Hence, $t_1 s = s$ and $s t_2 = s$. This implies that $\mathcal{T}_i \mathcal{T}_{-i}$ is a left s-unital $\mathcal{T}_i \mathcal{T}_{-i}$-module and a right s-unital $\mathcal{T}_i \mathcal{T}_{-i}$-module. Thus, $\mathcal{T}_i \mathcal{T}_{-i}$ is an s-unital ring.

(d):  Take an arbitrary integer $i \in \mathbb{Z}$. From the grading, it is clear that $\mathcal{T}_i \mathcal{T}_{-i} \mathcal{T}_i \subseteq \mathcal{T}_i$. It remains to show that $\mathcal{T}_i \subseteq \mathcal{T}_i \mathcal{T}_{-i} \mathcal{T}_i$. Let $s \in \mathcal{T}_i$ be an arbitrary element. First suppose that $i \geq 0$, then by (a) there is some $\epsilon(s) \in \mathcal{T}_i \mathcal{T}_{-i}$ such that $s = \epsilon(s) s \in \mathcal{T}_i \mathcal{T}_{-i} \mathcal{T}_i$. On the other hand, if $i < 0$, then by (b) there is some $\epsilon(s) \in \mathcal{T}_{-i} \mathcal{T}_{i}$ such that $s=s \epsilon(s) \in \mathcal{T}_i \mathcal{T}_{-i} \mathcal{T}_i$. Thus, $\mathcal{T}_i = \mathcal{T}_i \mathcal{T}_{-i} \mathcal{T}_i$ for every $i \in \mathbb{Z}$. 
\end{proof}

Recall that for idempotents $e, f$ we define the idempotent ordering by $e \leq f \iff ef=fe=e$.
\begin{remark}

Let $A$ be an epsilon-strongly $\mathbb{Z}$-graded ring. Let $\epsilon_i \in A_i A_{-i}$ denote the multiplicative identity element of $A_i A_{-i}$ for $i \in \mathbb{Z}$ (see Proposition \ref{prop:nearly_char}). 
If the gradation on $A$ is semi-saturated, then
$\epsilon_0 \geq \epsilon_1 \geq \epsilon_2 \geq \epsilon_3 \geq \ldots$
and
$\epsilon_0 \geq \epsilon_{-1} \geq \epsilon_{-2} \geq \epsilon_{-3} \geq \ldots$.
\end{remark}

For the next section, let $(P,Q,\psi)$ be a unital $R$-system. Suppose that $(P,Q,\psi)$ satisfies Condition (FS'). By Proposition \ref{prop:fsprime_char}(b), this implies that $\Delta(1_R) \in \mathcal{F}_P(Q)$ and $\Gamma(1_R) \in \mathcal{F}_Q(P)$.  Consider the Toeplitz representation $(\iota_Q, \iota_P, \iota_R, \mathcal{T}_{(P,Q,\psi)})$. We define, $$\epsilon_0 := \iota_R(1_R), \qquad \epsilon_i := \pi_{\iota_{Q^{\otimes i}}, \iota_{P^{\otimes i}}}(\Delta^i(1_R)) = \chi_{\iota_{Q^{\otimes i}}, \iota_{P^{\otimes i}}}(\Gamma^i(1_R)) \in \mathcal{T}_{i} \mathcal{T}_{-i},$$ for $i > 0$.

\begin{lemma}
The sequence $\{ \epsilon_i \}_{i \geq 0}$ consists of idempotents such that $\epsilon_0 \geq \epsilon_1 \geq \epsilon_2 \geq \epsilon_3 \geq \epsilon_4 \geq \dots$ holds in the idempotent ordering. 
\label{lem:idem_seq}
\end{lemma}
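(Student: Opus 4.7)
The plan is to prove both assertions simultaneously by induction on $i$, leveraging two ingredients available at this point in the paper: (i) by Proposition \ref{prop:fsprime} together with Proposition \ref{prop:fsprime_char}(b), the $i$th tensor power system $(P^{\otimes i}, Q^{\otimes i}, \psi_i)$ also satisfies Condition (FS'), so $\Delta^i(1_R) = \mathrm{id}_{Q^{\otimes i}} \in \mathcal{F}_{P^{\otimes i}}(Q^{\otimes i})$ and dually $\Gamma^i(1_R) = \mathrm{id}_{P^{\otimes i}} \in \mathcal{F}_{Q^{\otimes i}}(P^{\otimes i})$; and (ii) the compatibility identities (\ref{eq:2.9a}) that describe how $\pi$ and $\chi$ interact with the maps $\iota_Q^{\otimes n}$ and $\iota_P^{\otimes n}$.

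For idempotency I would argue that $\mathrm{id}_{Q^{\otimes i}}$ is the multiplicative identity of the ring $\mathcal{L}_{P^{\otimes i}}(Q^{\otimes i}) = \mathcal{F}_{P^{\otimes i}}(Q^{\otimes i})$, hence an idempotent; since $\pi_{\iota_{Q^{\otimes i}}, \iota_{P^{\otimes i}}}$ is a ring homomorphism by Proposition \ref{prop:pi}, applying it yields that $\epsilon_i$ is an idempotent of $\mathcal{T}_{(P,Q,\psi)}$. The case $i=0$ is immediate since $\iota_R$ is a ring homomorphism. Moreover, because $(P,Q,\psi)$ is a unital $R$-system, $\epsilon_0 = \iota_R(1_R)$ is the multiplicative identity element of $\mathcal{T}_{(P,Q,\psi)}$, so $\epsilon_0 \geq \epsilon_i$ for every $i \geq 1$.

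For the inductive step $\epsilon_{i-1} \geq \epsilon_i$ with $i \geq 1$, I need both $\epsilon_{i-1}\epsilon_i = \epsilon_i$ and $\epsilon_i \epsilon_{i-1} = \epsilon_i$. The key observation is that (\ref{eq:2.9a}) applied to the $(i-1)$th tensor power system gives
\begin{equation*}
\epsilon_{i-1}\cdot \iota_{Q^{\otimes (i-1)}}(q) = \iota_{Q^{\otimes (i-1)}}\bigl(\Delta^{i-1}(1_R)(q)\bigr) = \iota_{Q^{\otimes (i-1)}}(q)
\end{equation*}
for every $q \in Q^{\otimes (i-1)}$, and symmetrically (via the $\chi$-form of $\epsilon_{i-1}$) that $\iota_{P^{\otimes (i-1)}}(p)\cdot \epsilon_{i-1} = \iota_{P^{\otimes (i-1)}}(p)$ for every $p \in P^{\otimes (i-1)}$. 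Writing $\epsilon_i = \sum_k \iota_{Q^{\otimes i}}(\tilde q_k)\iota_{P^{\otimes i}}(\tilde p_k)$ and factoring each $\tilde q_k = a_k \otimes b_k \in Q^{\otimes (i-1)}\otimes Q$, the multiplicativity of the Toeplitz representation gives $\iota_{Q^{\otimes i}}(\tilde q_k) = \iota_{Q^{\otimes (i-1)}}(a_k)\iota_Q(b_k)$, so $\epsilon_{i-1}$ absorbs into the leftmost factor summand by summand; this yields $\epsilon_{i-1}\epsilon_i = \epsilon_i$. The dual computation, factoring $\tilde p_k = c_k \otimes d_k \in P \otimes P^{\otimes (i-1)}$ and letting $\epsilon_{i-1}$ act on the rightmost piece, gives $\epsilon_i \epsilon_{i-1} = \epsilon_i$.

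The only real nuisance will be keeping the tensor-order conventions straight ($Q^{\otimes n} = Q^{\otimes (n-1)}\otimes Q$ but $P^{\otimes n} = P\otimes P^{\otimes (n-1)}$), so that the last factor of an element of $Q^{\otimes i}$ and the first factor of an element of $P^{\otimes i}$ are what remains after $\epsilon_{i-1}$ is absorbed. Once this is tracked, an induction on $i$ delivers the full chain $\epsilon_0 \geq \epsilon_1 \geq \epsilon_2 \geq \epsilon_3 \geq \cdots$, completing the proof.
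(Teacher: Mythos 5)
Your proposal is correct and follows essentially the same route as the paper: the ordering $\epsilon_{i-1}\geq\epsilon_i$ is obtained exactly as in the paper's proof, by splitting off the last $Q$-factor and the first $P$-factor of the monomials of $\epsilon_i$ and absorbing $\epsilon_{i-1}$ via the two relations in (\ref{eq:2.9a}), using both the $\pi$- and $\chi$-descriptions of $\epsilon_{i-1}$. Your idempotency argument (that $\Delta^i(1_R)$ is the identity of $\mathcal{F}_{P^{\otimes i}}(Q^{\otimes i})$ and $\pi$ is a ring homomorphism) is a harmless shortcut for the paper's direct computation, and your handling of $\epsilon_0\geq\epsilon_1$ via $\epsilon_0=\iota_R(1_R)$ being the identity of the Toeplitz ring matches the paper's observation.
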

\begin{proof}
Fix an arbitrary integer $i \geq 0$. By Proposition \ref{prop:pi}, we have that $\epsilon_i = \pi (\Delta^i(1_R)) = \sum_j \iota_{Q^{\otimes i}}(q_j) \iota_{P^{\otimes i}}(p_j)$ for some $q_j \in Q^{\otimes i}$ and $p_j \in P^{\otimes i}$. Then, by the left relation in (\ref{eq:2.9a}), 
\begin{align*}
\epsilon_i^2 &= \sum_j \epsilon_i \iota_{Q^{\otimes i}}(q_j) \iota_{P^{\otimes i}}(p_j) = \sum_j (\pi (\Delta^i(1_R)) \iota_{Q^{\otimes i}}(q_j)) \iota_{P^{\otimes i}}(p_j) \\ &= \sum_j \iota_{Q^{\otimes i}}(\Delta^i(1_R)(q_j))\iota_{P^{\otimes i}}(p_j) = \sum_j \iota_{Q^{\otimes i}}(q_j) \iota_{P^{\otimes i}}(p_j) = \epsilon_i.
\end{align*} Hence, $\epsilon_i$ is an idempotent.

It is clear that $\iota_R(1_R) = \epsilon_0 \geq \epsilon_1$. Take an arbitrary integer $m > 0$. We will prove that $\epsilon_m \geq \epsilon_{m+1}$. This is equivalent to $\epsilon_{m+1} = \epsilon_{m+1} \epsilon_m = \epsilon_m \epsilon_{m+1}$. 
We first prove that $\epsilon_m \epsilon_{m+1} = \epsilon_m$. 
Let $\epsilon_{m+1} = \sum_{j} \iota_{Q^{\otimes m+1}}(q_j)\iota_{P^{\otimes m+1}}(p_j)$. Write $q_j = q_j' \otimes q_j''$ where $q_j' \in Q^{\otimes m}$ and $q_j'' \in Q$. Then, by the left relation in (\ref{eq:2.9a}),
\begin{align*}
\epsilon_m \epsilon_{m+1} &= \sum_j \epsilon_m \iota_{Q^{\otimes m+1}}(q_j)\iota_{P^{\otimes m+1}}(p_j) = \sum_j \epsilon_m \iota_{Q^{\otimes m}}(q_j') \iota_{Q}(q_j'')\iota_{P^{\otimes m+1}}(p_j) \\ &= \sum_j \iota_{Q^{\otimes m}}(\Delta^{m}(1_R)(q_j')) \iota_Q (q_j'') \iota_{P^{\otimes m+1}}(p_j) = \sum_j \iota_{Q^{\otimes m}}(q_j') \iota_Q (q_j'') \iota_{P^{\otimes m+1}}(p_j) \\ &= \sum_j \iota_{Q^{\otimes m+1}}(q_j) \iota_{P^{\otimes m+1}}(p_j) = \epsilon_m.
\end{align*}

Again, let $\epsilon_{m+1} = \sum_{j} \iota_{Q^{\otimes m+1}}(q_j)\iota_{P^{\otimes m+1}}(p_j)$. This time write $p_j = p_j' \otimes p_j''$ for some $p_j' \in P$ and $p_j'' \in P^{\otimes m}$. Then, by the right relation in (\ref{eq:2.9a}),
\begin{align*}
\epsilon_{m+1} \epsilon_m  &= \sum_j \iota_{Q^{\otimes m+1}}(q_j)\iota_{P^{\otimes m+1}}(p_j) \epsilon_m  = \sum_j  \iota_{Q^{\otimes m+1}}(q_j) \iota_P(p_j') \iota_{P^{\otimes m}}(p_j'') \epsilon_m \\ &= \sum_j  \iota_{Q^{\otimes m+1}}(q_j) \iota_P(p_j') \iota_{P^{\otimes m}}(p_j'') \chi(\Gamma^m(1_R)) = \sum_j  \iota_{Q^{\otimes m+1}}(q_j) \iota_P(p_j') \iota_{P^{\otimes m}}(\Gamma^m(1_R)(p_j'')) \\ &= \sum_j  \iota_{Q^{\otimes m+1}}(q_j) \iota_P(p_j') \iota_{P^{\otimes m}}(p_j'') = \sum_j  \iota_{Q^{\otimes m+1}}(q_j) \iota_{P^{\otimes m +1 }}(p_j) = \epsilon_m.
\end{align*}

\end{proof}

\begin{proposition}
Let $R$ be a unital ring and let $(P,Q,\psi)$ be a unital $R$-system that satisfies Condition (FS'). Let $\epsilon_i$ be the idempotents defined above. The following assertions hold for every $i \geq 0$:
\begin{enumerate}[(a)]
\begin{item}
For any $s \in \mathcal{T}_i$ we have that $\epsilon_i s = s$;
\end{item}
\begin{item}
For any $t \in \mathcal{T}_{-i}$ we have that $t \epsilon_i =t$.
\end{item}
\end{enumerate}
Consequently, $\mathcal{T}_i \mathcal{T}_{-i}$ is a unital ideal with multiplicative identity element $\epsilon_i$ for every $i \geq 0$. 
\label{prop:positive_ideals}
\end{proposition}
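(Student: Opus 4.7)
The plan is to mimic the proof of Proposition \ref{prop:nearly_toeplitz}(a)--(c), but exploit the fact that under Condition (FS') we do not merely get local units depending on finitely many elements; instead, by Proposition \ref{prop:fsprime_char}(b), we have $\Delta^i(1_R) = \mathrm{id}_{Q^{\otimes i}} \in \mathcal{F}_{P^{\otimes i}}(Q^{\otimes i})$ and $\Gamma^i(1_R) = \mathrm{id}_{P^{\otimes i}} \in \mathcal{F}_{Q^{\otimes i}}(P^{\otimes i})$ for every $i \geq 0$ (using also Proposition \ref{prop:fsprime}). Hence a single element of $\mathcal{F}_{P^{\otimes i}}(Q^{\otimes i})$ serves as a unit for \emph{all} elements of $Q^{\otimes i}$, which is exactly what is needed to upgrade s-unitality to unitality.

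For (a), I would fix $i \geq 0$ and $s \in \mathcal{T}_i$, writing $s = \sum_j \iota_{Q^{\otimes m_j}}(q_j)\iota_{P^{\otimes n_j}}(p_j)$ with $m_j - n_j = i$. If $i = 0$, then $\epsilon_0 = \iota_R(1_R)$, and left-unitality of the $R$-bimodule $Q^{\otimes m_j}$ (together with $\iota_R \cdot \iota_{Q^{\otimes m_j}}(q) = \iota_{Q^{\otimes m_j}}(1_R \cdot q)$) immediately gives $\epsilon_0 s = s$. If $i > 0$, I would split $q_j = q_j' \otimes q_j''$ with $q_j' \in Q^{\otimes i}$, $q_j'' \in Q^{\otimes (m_j - i)}$, and apply the left relation of (\ref{eq:2.9a}) to obtain
\[
\epsilon_i \iota_{Q^{\otimes m_j}}(q_j) = \pi(\Delta^i(1_R))\,\iota_{Q^{\otimes i}}(q_j')\,\iota_{Q^{\otimes(m_j-i)}}(q_j'') = \iota_{Q^{\otimes i}}(\Delta^i(1_R)(q_j'))\,\iota_{Q^{\otimes(m_j-i)}}(q_j'') = \iota_{Q^{\otimes m_j}}(q_j),
\]
since $\Delta^i(1_R)$ is the identity on $Q^{\otimes i}$. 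Summing over $j$ and multiplying by $\iota_{P^{\otimes n_j}}(p_j)$ on the right yields $\epsilon_i s = s$.

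Part (b) is symmetric: writing $t = \sum_j \iota_{Q^{\otimes m_j}}(q_j)\iota_{P^{\otimes n_j}}(p_j) \in \mathcal{T}_{-i}$ (so now $n_j - m_j = i$) and splitting $p_j = p_j' \otimes p_j''$ with $p_j'' \in P^{\otimes i}$, I would use the right relation of (\ref{eq:2.9a}) together with $\epsilon_i = \chi(\Gamma^i(1_R))$ and the fact that $\Gamma^i(1_R) = \mathrm{id}_{P^{\otimes i}}$ to get $\iota_{P^{\otimes n_j}}(p_j) \epsilon_i = \iota_{P^{\otimes n_j}}(p_j)$, and hence $t\epsilon_i = t$.

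Finally, for the concluding statement, note that $\mathcal{T}_i \mathcal{T}_{-i}$ is an ideal of $\mathcal{T}_0$ (a standard fact from the $\mathbb{Z}$-grading). By Lemma \ref{lem:idem_seq}, $\epsilon_i$ is idempotent and lies in $\mathcal{T}_i \mathcal{T}_{-i}$. Given any element $x = \sum_k a_k b_k$ with $a_k \in \mathcal{T}_i$ and $b_k \in \mathcal{T}_{-i}$, parts (a) and (b) give $\epsilon_i x = \sum_k (\epsilon_i a_k) b_k = \sum_k a_k b_k = x$ and $x \epsilon_i = \sum_k a_k (b_k \epsilon_i) = \sum_k a_k b_k = x$, so $\epsilon_i$ is a two-sided multiplicative identity element for $\mathcal{T}_i \mathcal{T}_{-i}$. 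The only mildly delicate point is remembering to invoke Proposition \ref{prop:fsprime} to pass Condition (FS') from $(P,Q,\psi)$ to $(P^{\otimes i}, Q^{\otimes i}, \psi_i)$ so that $\Delta^i(1_R)$ and $\Gamma^i(1_R)$ indeed make sense as identity operators in $\mathcal{F}_{P^{\otimes i}}(Q^{\otimes i})$ and $\mathcal{F}_{Q^{\otimes i}}(P^{\otimes i})$; everything else is a direct application of the relations recorded in Proposition \ref{prop:pi}.
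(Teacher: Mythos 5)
Your proof is correct; the only implicit step (writing elements of $Q^{\otimes m_j}$ as simple tensors before splitting off the initial segment) is harmless by linearity and is glossed over in the paper as well. Your route differs mildly but genuinely from the paper's: the paper proves (a) by letting the full-level operator act, $\iota_{Q^{\otimes m}}(q)=\pi(\Delta^{m}(1_R))\iota_{Q^{\otimes m}}(q)=\epsilon_m\iota_{Q^{\otimes m}}(q)$, and then inserting $\epsilon_i$ via the idempotent chain of Lemma \ref{lem:idem_seq} ($\epsilon_i\epsilon_m=\epsilon_m$ since $i\leq m$), with the symmetric argument at level $n$ for (b); you instead transplant the splitting technique from the proof of Proposition \ref{prop:nearly_toeplitz}, peeling off the length-$i$ initial (resp.\ terminal) segment of each monomial and letting $\epsilon_i=\pi(\Delta^{i}(1_R))=\chi(\Gamma^{i}(1_R))$ act on it directly through the relations (\ref{eq:2.9a}), the point being that under Condition (FS') the local unit of the s-unital argument is upgraded to the genuine identity operator $\Delta^{i}(1_R)=\id_{Q^{\otimes i}}$, $\Gamma^{i}(1_R)=\id_{P^{\otimes i}}$ furnished by Propositions \ref{prop:fsprime} and \ref{prop:fsprime_char}(b). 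What each buys: your argument makes (a) and (b) independent of the ordering statement in Lemma \ref{lem:idem_seq} (you only need idempotency of $\epsilon_i$ and $\epsilon_i\in\mathcal{T}_i\mathcal{T}_{-i}$, and idempotency is in fact automatic for a two-sided identity of the ideal), whereas the paper's version keeps the tensor-splitting bookkeeping confined to Lemma \ref{lem:idem_seq} and reuses that lemma, which it wants anyway for the surrounding discussion of decreasing chains of the $\epsilon_i$. Both arguments ultimately rest on the same ingredients, so the difference is one of organization rather than substance.
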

\begin{proof}
Note that $\mathcal{T}_0$ is a unital ring with multiplicative identity element $\epsilon_0 = \iota_R(1_R)$. The statements are clear for $i = 0$. 

(a): Take an arbitrary positive integer $i$. Consider a monomial $\iota_{Q^{\otimes m}}(q) \iota_{P^{\otimes n}}(p)$ where $m,n$ are non-negative integers such that $m-n=i$. Then, $0 < i \leq m$. By Lemma \ref{lem:idem_seq}, $\epsilon_m \geq \epsilon_i$. Hence, 
\begin{align*}
\iota_{Q^{\otimes m}}(q) \iota_{P^{\otimes n}}(p) &= \iota_{Q^{\otimes m}}(\Delta^{m}(1_R)(q)) \iota_{P^{\otimes n}}(p) = \pi(\Delta^m(1_R)) \iota_{Q^{\otimes m}}(q) \iota_{P^{\otimes n}}(p) \\ &= \epsilon_m \iota_{Q^{\otimes m}}(q) \iota_{P^{\otimes n}}(p) = \epsilon_i \epsilon_m \iota_{Q^{\otimes m}}(q) \iota_{P^{\otimes n}}(p) = \epsilon_i \iota_{Q^{\otimes m}}(q) \iota_{P^{\otimes n}}(p).
\end{align*} Any element $s \in \mathcal{T}_i$ is a finite sum of elements of the above form (see (\ref{eq:grading})). Hence, it follows that $\epsilon_i s = s$. 

(b): Take an arbitrary positive integer $i$. Consider a monomial $\iota_{Q^{\otimes m}}(q) \iota_{P^{\otimes n}}(p)$ where $m,n$ are non-negative integers such that $m-n=-i$. Then $0 < i \leq n$. By Lemma \ref{lem:idem_seq}, $\epsilon_n \geq \epsilon_i$. Hence, $\iota_{Q^{\otimes m}}(q) \iota_{P^{\otimes n}}(p) = \iota_{Q^{\otimes m}}(q) \iota_{P^{\otimes n}}(\Gamma^n(1_R)(p)) = \iota_{Q^{\otimes m}}(q) \iota_{P^{\otimes n}}(p) \chi(\Gamma^n(1_R)) = \iota_{Q^{\otimes m}}(q) \iota_{P^{\otimes n}}(p) \epsilon_n = \iota_{Q^{\otimes m}}(q) \iota_{P^{\otimes n}}(p) \epsilon_n \epsilon_i =  \iota_{Q^{\otimes m}}(q) \iota_{P^{\otimes n}}(p)\epsilon_i$. Since any element $t \in \mathcal{T}_{-i}$ is a finite sum of elements of the above form, it follows that $ t \epsilon_i = t$. 
\end{proof}

We will see that restricting our attention to semi-full covariant representations $(S,T,\sigma,B)$ makes life easier. This special type of graded covariant representations have the property that the image of $\psi_k$ is enough to generate the ideal $B_{-k} B_k$ for $k \geq 0$ (see Definition \ref{def:semi-full}).  We first prove that the property of being semi-full is invariant under isomorphism in the category of surjective covariant representations $\mathcal{C}_{(P,Q,\psi)}$.




\begin{proposition}
Let $R$ be a ring, let $(P,Q,\psi)$ be an $R$-system and suppose that $(S,T,\sigma,B) \cong_{\text{r}} (S',T',\sigma', B')$ are two isomorphic covariant representations of $(P,Q,\psi)$. If $(S,T,\sigma,B)$ is semi-full, then $(S',T',\sigma', B')$ is semi-full. 
\label{prop:semi-full_iso}
\end{proposition}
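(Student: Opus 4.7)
The plan is to unpack the definition of semi-full and transport the equality $B_{-k}B_k = I_{\psi,\sigma}^{(k)}$ across the isomorphism $\phi \colon B \to B'$ coming from $(S,T,\sigma,B) \cong_{\text{r}} (S',T',\sigma',B')$. Since $\phi$ is a morphism in $\mathcal{C}_{(P,Q,\psi)}$, Lemma \ref{lem:rep_maps} yields that $\phi$ is in fact a $\mathbb{Z}$-graded ring isomorphism. In particular, $\phi(B_i) = B_i'$ for every $i \in \mathbb{Z}$, and the restriction $\phi|_{B_0} \colon B_0 \to B_0'$ is a ring isomorphism.

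First I would verify that $\phi$ sends the generating set of $I_{\psi,\sigma}^{(k)}$ bijectively onto the generating set of $I_{\psi,\sigma'}^{(k)}$. Indeed, from $\phi \circ \sigma = \sigma'$ we obtain, for arbitrary $k \geq 0$, $p \in P^{\otimes k}$, $q \in Q^{\otimes k}$, the equality
\[
\phi\bigl(\sigma(\psi_k(p \otimes q))\bigr) = \sigma'(\psi_k(p \otimes q)).
\]
Since $\phi|_{B_0}$ is a ring isomorphism carrying the generators of the $B_0$-ideal $I_{\psi,\sigma}^{(k)}$ onto the generators of the $B_0'$-ideal $I_{\psi,\sigma'}^{(k)}$, it follows that $\phi(I_{\psi,\sigma}^{(k)}) = I_{\psi,\sigma'}^{(k)}$.

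Next I would use that $\phi$ is a graded ring homomorphism to compute
\[
\phi(B_{-k} B_k) = \phi(B_{-k})\, \phi(B_k) = B_{-k}'\, B_k',
\]
again for every $k \geq 0$. Applying $\phi$ to the hypothesized identity $B_{-k} B_k = I_{\psi,\sigma}^{(k)}$ and combining the two displayed equalities yields $B_{-k}' B_k' = I_{\psi,\sigma'}^{(k)}$, which is the desired semi-fullness of $(S',T',\sigma',B')$.

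There is no real obstacle here: the proof is a definition-chase, and the only point requiring a moment's care is that an isomorphism of covariant representations is automatically $\mathbb{Z}$-graded, which is precisely the content of Lemma \ref{lem:rep_maps} and ensures that $\phi$ preserves both the homogeneous components and the $B_0$-module structure used to generate the ideals $I_{\psi,\sigma}^{(k)}$.
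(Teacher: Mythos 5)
Your proof is correct and follows essentially the same route as the paper: obtain the $\mathbb{Z}$-graded isomorphism $\phi$ from Lemma \ref{lem:rep_maps}, use $\phi \circ \sigma = \sigma'$ to match the generating sets of the ideals $I_{\psi,\sigma}^{(k)}$ and $I_{\psi,\sigma'}^{(k)}$, and transport the equality $B_{-k}B_k = I_{\psi,\sigma}^{(k)}$ through $\phi$. Your explicit remark that $\phi|_{B_0}$ is an isomorphism (so that images of ideals are ideals generated by the image of the generators) is exactly the point the paper's one-line computation silently relies on.
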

\begin{proof}
Let $\phi \colon B \to B'$ be the $\mathbb{Z}$-graded isomorphism coming from Lemma \ref{lem:rep_maps}. Hence, \begin{align*}
B_{-k}' B_k' &= \phi(B_{-k}) \phi(B_{k}) = \phi(B_{-k} B_k) = \phi(I_{\psi, \sigma}^{(k)}) \\ &= (\{ \phi \circ \sigma(\psi_k(p \otimes q)) \mid  p \in P^{\otimes k}, q \in Q^{\otimes k} \}) = I_{\psi,\sigma'}^{(k)}.
\end{align*}
Thus, $(S',T', \sigma', B')$ is semi-full.
\end{proof}

We now establish sufficient conditions for a semi-full covariant representation to be nearly epsilon-strongly $\mathbb{Z}$-graded.

\begin{proposition}
Let $R$ be an s-unital ring and let $(P,Q,\psi)$ be an s-unital $R$-system. Suppose that $(S,T,\sigma, B)$ is a semi-full covariant representation of $(P,Q,\psi)$ and that the following assertions hold:
\begin{enumerate}[(a)]
\begin{item}
$(P,Q,\psi)$ satisfies Condition (FS),
\end{item}
\begin{item}
$I_{\psi,\sigma}^{(k)}$ is s-unital for $k \geq 0$.
\end{item}
\end{enumerate}
Then, $B$ is nearly epsilon-strongly $\mathbb{Z}$-graded.
\label{prop:nearly_epsilon_suff}
\end{proposition}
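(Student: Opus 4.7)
The plan is to verify the two conditions in the characterization of nearly epsilon-strongly $\mathbb{Z}$-graded rings given in Proposition \ref{prop:nearly_char}(a). That is, I will show that (i) $B$ is symmetrically $\mathbb{Z}$-graded, i.e. $B_i = B_i B_{-i} B_i$ for every $i \in \mathbb{Z}$, and (ii) $B_i B_{-i}$ is an s-unital ideal of $B_0$ for every $i \in \mathbb{Z}$. Since $(S,T,\sigma,B)$ is surjective and graded, the universal property (Theorem \ref{thm:universal}) yields a $\mathbb{Z}$-graded ring epimorphism $\eta \colon \mathcal{T}_{(P,Q,\psi)} \to B$ satisfying $\eta(\mathcal{T}_i) = B_i$ for every $i \in \mathbb{Z}$, and hence $\eta(\mathcal{T}_i \mathcal{T}_{-i}) = B_i B_{-i}$. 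This homomorphism is the bridge that transfers the structural results for the Toeplitz ring (Proposition \ref{prop:nearly_toeplitz}), which crucially rely on Condition (FS) and s-unitality of $(P,Q,\psi)$, over to the arbitrary representation ring $B$.

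For the symmetric condition (i), Proposition \ref{prop:nearly_toeplitz}(d) gives $\mathcal{T}_i = \mathcal{T}_i \mathcal{T}_{-i} \mathcal{T}_i$ for every $i \in \mathbb{Z}$, so applying $\eta$ yields
\[
B_i \;=\; \eta(\mathcal{T}_i) \;=\; \eta(\mathcal{T}_i \mathcal{T}_{-i} \mathcal{T}_i) \;=\; \eta(\mathcal{T}_i)\,\eta(\mathcal{T}_{-i})\,\eta(\mathcal{T}_i) \;=\; B_i B_{-i} B_i.
\]

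For (ii), I would split into two cases according to the sign of the index, because the semi-fullness hypothesis only directly controls the ideals $B_{-k}B_k$ with $k \geq 0$. For indices $i < 0$, writing $i = -k$ with $k > 0$, Definition \ref{def:semi-full} of semi-full combined with hypothesis (b) gives $B_i B_{-i} = B_{-k} B_k = I_{\psi,\sigma}^{(k)}$, which is s-unital by assumption. For indices $i \geq 0$, I would use Proposition \ref{prop:nearly_toeplitz}(c), which (under Condition (FS) and the s-unitality assumptions) guarantees that $\mathcal{T}_i \mathcal{T}_{-i}$ is an s-unital ring. Since the ring-theoretic image of an s-unital ring under a ring homomorphism is again an s-unital ring (given $x = \eta(y)$ with $y \in \mathcal{T}_i \mathcal{T}_{-i}$ and $r \in \mathcal{T}_i \mathcal{T}_{-i}$ satisfying $ry = yr = y$, the element $\eta(r)$ lies in $B_i B_{-i}$ and fixes $x$), the ideal $B_i B_{-i} = \eta(\mathcal{T}_i \mathcal{T}_{-i})$ is s-unital as a subring of $B_0$, which immediately gives s-unitality as an ideal of $B_0$.

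The main obstacle in this argument is the visible asymmetry between the two parts of (ii): semi-fullness is a one-sided hypothesis, producing s-unitality only for $B_{-k}B_k$ with $k \geq 0$ through the external datum $I_{\psi,\sigma}^{(k)}$, whereas the s-unitality for $B_i B_{-i}$ with $i > 0$ must come from the internal structure of the representation ring. This is precisely why Condition (FS) and the s-unital $R$-system hypothesis are needed: they feed into Proposition \ref{prop:nearly_toeplitz}(c) to produce local units in the ``positive'' ideals $\mathcal{T}_i \mathcal{T}_{-i}$ of the Toeplitz ring, which then descend to $B$ through $\eta$.
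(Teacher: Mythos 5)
Your proposal is correct and follows essentially the same route as the paper's proof: the graded epimorphism $\eta \colon \mathcal{T}_{(P,Q,\psi)} \to B$ from Theorem \ref{thm:universal} transfers Proposition \ref{prop:nearly_toeplitz}(c),(d) to $B$, semi-fullness together with hypothesis (b) handles $B_iB_{-i}$ for $i<0$, and Proposition \ref{prop:nearly_char}(a) concludes. No gaps; the only difference is that you spell out the (routine) verification that s-unitality passes to homomorphic images, which the paper states without proof.
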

\begin{proof}
Let $\mathcal{T}_{(P,Q,\psi)} = \bigoplus_{i \in \mathbb{Z}} \mathcal{T}_i$ be the Toeplitz ring associated to the $R$-system $(P,Q,\psi)$. By Proposition \ref{prop:nearly_toeplitz}(c), $\mathcal{T}_i \mathcal{T}_{-i}$ is s-unital for every $ i \geq 0$. By Theorem \ref{thm:universal}, there is a $\mathbb{Z}$-graded ring epimorphism $\eta \colon \mathcal{T}_{(P,Q,\psi)} \to B$. Since the image of an s-unital ring under a ring homomorphism is in turn s-unital, it follows that $B_i B_{-i} = \eta(\mathcal{T}_i) \eta(\mathcal{T}_{-i})=\eta(\mathcal{T}_i \mathcal{T}_{-i})$ is s-unital for every $i \geq 0$. Furthermore, by Proposition \ref{prop:nearly_toeplitz}(d), we have that $\mathcal{T}_i = \mathcal{T}_i \mathcal{T}_{-i} \mathcal{T}_i$ for every $i \in \mathbb{Z}$. Applying $\eta$ to both sides yields, $B_i = B_i B_{-i} B_i$. Hence, $B$ is symmetrically $\mathbb{Z}$-graded.

Next, we show that $B_i B_{-i}$ is s-unital for $i < 0$. Since $(S,T,\sigma, B)$ is semi-full, we have that $B_{-k} B_k = I_{\psi,\sigma}^{(k)}$ for $k \geq 0$. Hence, (b) implies that $B_i B_{-i}$ is s-unital for $i < 0$. Thus, we have showed that $B_i B_{-i}$ is s-unital for $i \in \mathbb{Z}$ and that $B$ is symmetrically $\mathbb{Z}$-graded. By Proposition \ref{prop:nearly_char}(a), it follows that  $B=\bigoplus_{i \in \mathbb{Z}} B_i$ is nearly epsilon-strongly $\mathbb{Z}$-graded.
\end{proof}

The proof of the following proposition is entirely analogous to the proof of Proposition \ref{prop:nearly_epsilon_suff}.

\begin{proposition}
Let $R$ be a unital ring and let $(P,Q,\psi)$ be a unital $R$-system. Suppose that $(S,T,\sigma, B)$ is a semi-full covariant representation of $(P,Q,\psi)$ and that the following assertions hold:
\begin{enumerate}[(a)]
\begin{item}
$(P,Q,\psi)$ satisfies Condition (FS'),
\end{item}
\begin{item}
$I_{\psi,\sigma}^{(k)}$ is unital for $k \geq 0$.
\end{item}
\end{enumerate}
Then, $B$ is epsilon-strongly $\mathbb{Z}$-graded.
\label{prop:epsilon_suff}
\end{proposition}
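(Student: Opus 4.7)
The plan is to mirror the proof of Proposition \ref{prop:nearly_epsilon_suff} line by line, replacing \emph{s-unital} with \emph{unital} throughout and upgrading the Toeplitz inputs by invoking the Condition (FS') versions of the auxiliary results. Since Condition (FS') is stronger than Condition (FS) and unital implies s-unital, the ingredients of Proposition \ref{prop:nearly_epsilon_suff} remain available, and the only genuinely new piece of input needed is that the ideals $\mathcal{T}_i \mathcal{T}_{-i}$ upstairs in the Toeplitz ring are not merely s-unital but actually unital, so their images $B_i B_{-i}$ are unital for $i \geq 0$.

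First, I would apply Proposition \ref{prop:positive_ideals} to the Toeplitz ring $\mathcal{T}_{(P,Q,\psi)} = \bigoplus_{i \in \mathbb{Z}} \mathcal{T}_i$: for every $i \geq 0$, the ideal $\mathcal{T}_i \mathcal{T}_{-i}$ is unital with multiplicative identity element $\epsilon_i$. By the universal property of the Toeplitz ring (Theorem \ref{thm:universal}), there is a $\mathbb{Z}$-graded ring epimorphism $\eta \colon \mathcal{T}_{(P,Q,\psi)} \to B$. Since the surjective ring-homomorphic image of a unital ring is again unital (with identity element the image of the original identity), it follows that
$$B_i B_{-i} = \eta(\mathcal{T}_i)\eta(\mathcal{T}_{-i}) = \eta(\mathcal{T}_i \mathcal{T}_{-i})$$
is unital for every $i \geq 0$, with identity element $\eta(\epsilon_i)$.

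Second, I would dispose of the case $i < 0$ by invoking the semi-fullness of $(S,T,\sigma,B)$: by Definition \ref{def:semi-full}, $B_{-k} B_k = I_{\psi,\sigma}^{(k)}$ for every $k \geq 0$, and by hypothesis (b) each $I_{\psi,\sigma}^{(k)}$ is unital. Hence $B_i B_{-i}$ is unital for every $i \in \mathbb{Z}$. Third, I would verify that $B$ is symmetrically $\mathbb{Z}$-graded. Since Condition (FS') implies Condition (FS), Proposition \ref{prop:nearly_toeplitz}(d) gives $\mathcal{T}_i = \mathcal{T}_i \mathcal{T}_{-i} \mathcal{T}_i$ for every $i \in \mathbb{Z}$; applying the graded epimorphism $\eta$ to both sides yields $B_i = B_i B_{-i} B_i$ for every $i \in \mathbb{Z}$.

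Combining these two facts and appealing to Proposition \ref{prop:nearly_char}(b) finishes the proof: $B$ is symmetrically $\mathbb{Z}$-graded with each $B_i B_{-i}$ a unital ideal, so $B$ is epsilon-strongly $\mathbb{Z}$-graded. I do not anticipate a substantive obstacle here; the single point worth being careful about is the promotion from Condition (FS) to Condition (FS'), which is harmless because every conclusion of Section \ref{sec:epsilon} under Condition (FS) remains valid under Condition (FS'), and the unital refinements have already been packaged in Proposition \ref{prop:positive_ideals}.
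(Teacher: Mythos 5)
Your proposal is correct and matches the paper's intent exactly: the paper states that the proof is entirely analogous to that of Proposition \ref{prop:nearly_epsilon_suff}, and your write-up is precisely that analogue, upgrading the positive-degree ideals via Proposition \ref{prop:positive_ideals}, handling negative degrees by semi-fullness with hypothesis (b), getting symmetry from Proposition \ref{prop:nearly_toeplitz}(d), and concluding with Proposition \ref{prop:nearly_char}(b). No gaps to report.
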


On the other hand, a covariant representation $(S,T,\sigma, B)$ does not need to be semi-full for the ring $B$ to be epsilon-strongly $\mathbb{Z}$-graded (see Example \ref{ex:1}).

\section{Characterization up to graded isomorphism}
\label{sec:characterization}

In this section, we finally give characterizations of unital strongly, nearly epsilon-strongly and epsilon-strongly $\mathbb{Z}$-graded  Cuntz-Pimsner rings up to $\mathbb{Z}$-graded isomorphism.

\begin{theorem}
Let $\mathcal{O}_{(P,Q,\psi)}$ be a Cuntz-Pimsner ring of some system $(P,Q,\psi)$. If $\mathcal{O}_{(P,Q,\psi)}$ is nearly epsilon-strongly $\mathbb{Z}$-graded and $ \Ann_{\mathcal{O}_0}(\mathcal{O}_1) \cap (\Ann_{\mathcal{O}_0}(\mathcal{O}_1))^\bot = \{ 0 \}$, then,
$$ \mathcal{O}_{(P,Q,\psi)} \cong_{\text{gr}} \mathcal{O}_{(P', Q', \psi')},$$
where $(P',Q',\psi')$ is an $R'$-system such that $\mathcal{O}_{(P',Q',\psi')}$ is well-defined and the following assertions hold: 
\begin{enumerate}[(a)]
\begin{item}
$(P',Q',\psi')$ is an s-unital $R'$-system;
\end{item}
\begin{item}
$(\iota_{P'}^{CP}, \iota_{Q'}^{CP}, \iota_{R'}^{CP}, \mathcal{O}_{(P',Q',\psi')})$ is a semi-full covariant representation of $(P',Q',\psi')$;
\end{item}
\begin{item}
$(P', Q', \psi')$ satisfies Condition (FS);
\end{item}
\begin{item}
$I_{\psi', \iota_{\mathcal{O}_0}^{CP}}^{(k)}$ is s-unital for $k \geq 0$.
\end{item}
\end{enumerate}
Conversely, if $(P',Q',\psi')$ is an $R'$-system such that $\mathcal{O}_{(P',Q',\psi')}$ is well-defined and (a)-(d) hold, then $\mathcal{O}_{(P', Q', \psi')}$ is nearly epsilon-strongly $\mathbb{Z}$-graded.
\label{thm:1}
\end{theorem}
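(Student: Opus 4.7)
The plan is to obtain both implications of Theorem \ref{thm:1} by assembling the machinery already built up, without doing new calculations: the forward direction will reduce to Corollary \ref{cor:reduction}, and the converse to Proposition \ref{prop:nearly_epsilon_suff}.

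For the forward direction, I would first note that the Cuntz-Pimsner representation $(\iota_P^{CP}, \iota_Q^{CP}, \iota_R^{CP}, \mathcal{O}_{(P,Q,\psi)})$ is a graded covariant representation, so Proposition \ref{prop:toeplitz_exponent}(b) gives that $\mathcal{O}_{(P,Q,\psi)}$ is semi-saturated. Combined with the hypothesis that $\mathcal{O}_{(P,Q,\psi)}$ is nearly epsilon-strongly $\mathbb{Z}$-graded and the hypothesis $\Ann_{\mathcal{O}_0}(\mathcal{O}_1) \cap (\Ann_{\mathcal{O}_0}(\mathcal{O}_1))^\bot = \{0\}$, this shows that $\mathcal{O}_{(P,Q,\psi)}$ is pre-CP in the sense of Definition \ref{def:pre-cp}. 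I can then directly invoke Corollary \ref{cor:reduction} with $R' := \mathcal{O}_0$, $P' := \mathcal{O}_{-1}$, $Q' := \mathcal{O}_1$, and $\psi'(a \otimes a') := aa'$. This corollary simultaneously yields the graded isomorphism $\mathcal{O}_{(P,Q,\psi)} \cong_{\text{gr}} \mathcal{O}_{(P',Q',\psi')}$ and packages conclusions (a)--(d) of the theorem: its first bullet gives the s-unital $R'$-system (a) and Condition (FS) (c), its second bullet is (b), and its third bullet is exactly (d).

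For the converse, suppose $(P',Q',\psi')$ is an $R'$-system with $\mathcal{O}_{(P',Q',\psi')}$ well-defined and satisfying (a)--(d). I would apply Proposition \ref{prop:nearly_epsilon_suff} to the semi-full (by (b)) Cuntz-Pimsner covariant representation $(\iota_{P'}^{CP}, \iota_{Q'}^{CP}, \iota_{R'}^{CP}, \mathcal{O}_{(P',Q',\psi')})$. The ambient s-unitality hypotheses of Proposition \ref{prop:nearly_epsilon_suff} are provided by (a), its Condition (FS) hypothesis by (c), and its s-unitality hypothesis on $I^{(k)}_{\psi',\iota_{R'}^{CP}}$ by (d). The conclusion of Proposition \ref{prop:nearly_epsilon_suff} is precisely that $\mathcal{O}_{(P',Q',\psi')}$ is nearly epsilon-strongly $\mathbb{Z}$-graded.

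I do not expect any real obstacle: both directions are straightforward invocations. The only point requiring a slight reading convention is the symbol $I^{(k)}_{\psi',\iota_{\mathcal{O}_0}^{CP}}$ in (d), which in the converse direction should be read as $I^{(k)}_{\psi',\iota_{R'}^{CP}}$ since the role of $\mathcal{O}_0$ in Corollary \ref{cor:reduction} is now played by the coefficient ring $R'$ of the new system. No auxiliary lemmas beyond those already established are required.
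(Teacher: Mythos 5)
Your proposal is correct and follows essentially the same route as the paper: the forward direction is exactly an application of Corollary \ref{cor:reduction} (your extra step of re-verifying semi-saturation and the pre-CP property merely unfolds what that corollary's proof already handles), and the converse is the same direct invocation of Proposition \ref{prop:nearly_epsilon_suff} applied to the semi-full Cuntz-Pimsner representation. Your remark about reading $I^{(k)}_{\psi',\iota_{\mathcal{O}_0}^{CP}}$ as $I^{(k)}_{\psi',\iota_{R'}^{CP}}$ in the converse is the intended interpretation.
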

\begin{proof}
If the Cuntz-Pimsner ring $\mathcal{O}_{(P,Q,\psi)}$ is nearly epsilon-strongly $\mathbb{Z}$-graded and the condition $ \Ann_{\mathcal{O}_0}(\mathcal{O}_1) \cap (\Ann_{\mathcal{O}_0}(\mathcal{O}_1))^\bot = \{ 0 \}$ holds, then it follows from Corollary \ref{cor:reduction} that the Cuntz-Pimsner ring is graded isomorphic to $\mathcal{O}_{(\mathcal{O}_{-1}, \mathcal{O}_1,\psi')}$ and that (a)-(d) are satisfied. 

Conversely, let $(P', Q', \psi')$ be an $R'$-system such that $\mathcal{O}_{(P',Q',\psi')}$ exists and (a)-(d) are satisfied. Applying Proposition \ref{prop:nearly_epsilon_suff} to the covariant representation $(\iota_{P'}^{CP}, \iota_{Q'}^{CP}, \iota_{R'}^{CP}, \mathcal{O}_{(P',Q',\psi')})$, it follows that $\mathcal{O}_{(P',Q',\psi')}$ is nearly epsilon-strongly $\mathbb{Z}$-graded.
\end{proof}

For epsilon-strongly $\mathbb{Z}$-graded Cuntz-Pimsner rings, we obtain the following result:

\begin{theorem}
Let $\mathcal{O}_{(P,Q,\psi)}$ be a Cuntz-Pimsner ring of some system $(P,Q,\psi)$. If $\mathcal{O}_{(P,Q,\psi)}$ is epsilon-strongly $\mathbb{Z}$-graded and $ \Ann_{\mathcal{O}_0}(\mathcal{O}_1) \cap (\Ann_{\mathcal{O}_0}(\mathcal{O}_1))^\bot = \{ 0 \}$, then, 
$$ \mathcal{O}_{(P,Q,\psi)} \cong_{\text{gr}} \mathcal{O}_{(P', Q', \psi')},$$
where $(P',Q',\psi')$ is an $R'$-system such that $\mathcal{O}_{(P',Q',\psi')}$ is well-defined and the following assertions hold:
\begin{enumerate}[(a)]
\begin{item}
$(P',Q',\psi')$ is a unital $R'$-system;
\end{item}
\begin{item}
$(\iota_{P'}^{CP}, \iota_{Q'}^{CP}, \iota_{R'}^{CP}, \mathcal{O}_{(P',Q',\psi')})$ is a semi-full covariant representation of $(P',Q',\psi')$;
\end{item}
\begin{item}
$(P', Q', \psi')$ satisfies Condition (FS');
\end{item}
\begin{item}
$I_{\psi',\iota_{\mathcal{O}_0}^{CP}}^{(k)}$ is unital for $k \geq 0$.
\end{item}
\end{enumerate}
\label{thm:epsilon}
Conversely, if $(P',Q',\psi')$ is an $R'$-system such that $\mathcal{O}_{(P',Q',\psi')}$ is well-defined and (a)-(d) hold, then $\mathcal{O}_{(P', Q', \psi')}$ is epsilon-strongly $\mathbb{Z}$-graded.
\end{theorem}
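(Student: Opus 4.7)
The plan is to follow the same template as the proof of Theorem \ref{thm:1}, using the same $R'$-system $(P',Q',\psi') = (\mathcal{O}_{-1},\mathcal{O}_1,\psi')$ with $R' = \mathcal{O}_0$, and then to upgrade each s-unital conclusion to a unital one by exploiting the stronger hypothesis that $\mathcal{O}_{(P,Q,\psi)}$ is epsilon-strongly (rather than only nearly epsilon-strongly) $\mathbb{Z}$-graded. For the forward direction, I would first observe that by the implications in (\ref{eq:implications}) an epsilon-strongly $\mathbb{Z}$-graded ring is nearly epsilon-strongly $\mathbb{Z}$-graded, so Theorem \ref{thm:1} already delivers $\mathcal{O}_{(P,Q,\psi)} \cong_{\text{gr}} \mathcal{O}_{(\mathcal{O}_{-1},\mathcal{O}_1,\psi')}$ together with semi-fullness of the representation (giving (b) directly) and Condition (FS) for $(P',Q',\psi')$.

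Next I would verify (a). By Remark \ref{rem:unital_epsilon}(b) the ring $\mathcal{O}_{(P,Q,\psi)}$ is unital, and since $1_\mathcal{O} \in \mathcal{O}_0$ this forces $R' = \mathcal{O}_0$ to be a unital ring acting unitally on both $\mathcal{O}_{-1}$ and $\mathcal{O}_1$, so $(P',Q',\psi')$ is a unital $R'$-system. For (d), semi-fullness gives $I_{\psi',\iota_{\mathcal{O}_0}^{CP}}^{(k)} = \mathcal{O}_{-k}\mathcal{O}_k$, and Proposition \ref{prop:nearly_char}(b) promotes each $\mathcal{O}_i \mathcal{O}_{-i}$ from s-unital to unital under the epsilon-strong hypothesis. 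The only step that requires genuine work is (c). I would use Proposition \ref{prop:fsprime_char}(c), which reduces Condition (FS') to Condition (FS) (already granted) plus finite generation of $Q'_{R'}$ and ${}_{R'}P'$. To produce the finite generating sets, write the unit $\epsilon_1$ of $\mathcal{O}_1 \mathcal{O}_{-1}$ as $\epsilon_1 = \sum_j q_j p_j$ with $q_j \in \mathcal{O}_1$ and $p_j \in \mathcal{O}_{-1}$. For any $q \in \mathcal{O}_1 = Q'$ one then has
\[
q = \epsilon_1 q = \sum_j q_j (p_j q) = \sum_j q_j \cdot \psi'(p_j \otimes q),
\]
so $\{q_j\}$ generates $Q'$ as a right $R'$-module; the argument for $P'$ is symmetric, using the unit of $\mathcal{O}_{-1}\mathcal{O}_1$.

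The converse direction is immediate: assuming (a)--(d), apply Proposition \ref{prop:epsilon_suff} to the semi-full covariant representation $(\iota_{P'}^{CP}, \iota_{Q'}^{CP}, \iota_{R'}^{CP}, \mathcal{O}_{(P',Q',\psi')})$ to conclude that $\mathcal{O}_{(P',Q',\psi')}$ is epsilon-strongly $\mathbb{Z}$-graded. The main (though modest) obstacle is step (c): it is not enough to invoke Condition (FS) from Theorem \ref{thm:1} — one has to recognise that the global idempotents $\epsilon_{\pm 1}$ produced by the epsilon-strong hypothesis give finite generation of $P'$ and $Q'$, so that Proposition \ref{prop:fsprime_char} can bridge Condition (FS) to Condition (FS').
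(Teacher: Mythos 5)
Your proposal is correct and follows the same overall route as the paper: reduce to Theorem \ref{thm:1} via the implication epsilon-strong $\Rightarrow$ nearly epsilon-strong to get the graded isomorphism onto $\mathcal{O}_{(\mathcal{O}_{-1},\mathcal{O}_1,\psi')}$ together with (b), upgrade (a) and (d) using the unital bimodule/ideal structure from Definition \ref{def:nystedt_epsilon} and Proposition \ref{prop:nearly_char}(b), establish (c) via Proposition \ref{prop:fsprime_char}(c), and settle the converse with Proposition \ref{prop:epsilon_suff}. The only genuine divergence is in step (c): the paper obtains finite generation of $\mathcal{O}_{\pm 1}$ over $\mathcal{O}_0$ by citing \cite[Prop. 7(iv)]{nystedt2016epsilon}, whereas you prove it directly by expanding the local unit, which makes the proof more self-contained (your computation is essentially the proof of the cited fact). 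One small sidedness slip in your sketch: Proposition \ref{prop:fsprime_char}(c) requires $Q'=\mathcal{O}_1$ finitely generated as a \emph{right} $R'$-module (which your computation $q=\epsilon_1 q=\sum_j q_j\,\psi'(p_j\otimes q)$ gives) and $P'=\mathcal{O}_{-1}$ finitely generated as a \emph{left} $R'$-module. For the latter, using the unit of $\mathcal{O}_{-1}\mathcal{O}_1$ acting on the left of $\mathcal{O}_{-1}$ only yields generation as a right $R'$-module; the correct symmetric argument uses the right action of $\epsilon_1\in\mathcal{O}_1\mathcal{O}_{-1}$ on $\mathcal{O}_{-1}$ (from the case $i=-1$ of Definition \ref{def:nystedt_epsilon}): writing $\epsilon_1=\sum_j q_j p_j$ one gets $p=p\,\epsilon_1=\sum_j (p\,q_j)\,p_j$ with $p\,q_j\in\mathcal{O}_{-1}\mathcal{O}_1\subseteq\mathcal{O}_0$, so $\{p_j\}$ generates $P'$ as a left $R'$-module. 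With that correction the argument goes through; your alternative justification of (a) via Remark \ref{rem:unital_epsilon}(b) and the fact that the identity of a graded ring lies in the degree-zero component is also fine.
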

\begin{proof}
Assume that $(P',Q',\psi')$ is an $R'$-system such that $\mathcal{O}_{(P',Q',\psi')}$ exists and the assertions in (a)-(d) hold. Then Proposition \ref{prop:epsilon_suff} implies that $\mathcal{O}_{(P',Q',\psi')}$ is epsilon-strongly $\mathbb{Z}$-graded.

Conversely, assume that $\mathcal{O}_{(P,Q,\psi)}$ is epsilon-strongly $\mathbb{Z}$-graded and $ \Ann_{\mathcal{O}_0}(\mathcal{O}_1) \cap (\Ann_{\mathcal{O}_0}(\mathcal{O}_1))^\bot = \{ 0 \}$. Note that, in particular, $\mathcal{O}_{(P,Q,\psi)}$ is nearly epsilon-strongly $\mathbb{Z}$-graded. Hence, by Theorem \ref{thm:1}, $\mathcal{O}_{(P,Q,\psi)} \cong_{\text{gr}} \mathcal{O}_{(\mathcal{O}_{-1}, \mathcal{O}_1, \psi')}$ where $(\mathcal{O}_{-1}, \mathcal{O}_1, \psi')$ is an s-unital $\mathcal{O}_0$-system that satisfies Condition (FS) and such that (b) is satisfied. Furthermore (see Corollary \ref{cor:reduction}), 
\begin{equation}
(i_{\mathcal{O}_{-1}}, i_{\mathcal{O}_1}, i_{\mathcal{O}_0}, \mathcal{O}_{(P,Q,\psi)}) \cong_{\text{r}} (\iota_{\mathcal{O}_{-1}}^{CP}, \iota_{\mathcal{O}_1}^{CP}, \iota_{\mathcal{O}_0}^{CP}, \mathcal{O}_{(\mathcal{O}_{-1}, \mathcal{O}_1, \psi')}).
\label{eq:6}
\end{equation} 

First note that since the $\mathbb{Z}$-grading is assumed to be epsilon-strong it follows that $\mathcal{O}_i$ is a unital $\mathcal{O}_i \mathcal{O}_{-i} \text{--} \mathcal{O}_{-i} \mathcal{O}_i$-bimodule for each $i \in \mathbb{Z}$ (see Definition \ref{def:nystedt_epsilon}). This implies that $(\mathcal{O}_{-1}, \mathcal{O}_1, \psi')$ is a unital $\mathcal{O}_0$-system. Hence, (a) is satisfied. 

Next, we prove that the $\mathcal{O}_0$-system $(\mathcal{O}_{-1}, \mathcal{O}_1, \psi')$ satisfies Condition (FS'). Since $\mathcal{O}_{(P,Q,\psi)}$ is assumed to be epsilon-strongly $\mathbb{Z}$-graded, it follows from \cite[Prop. 7(iv)]{nystedt2016epsilon} that $\mathcal{O}_i$ is a finitely generated $\mathcal{O}_0$-bimodule for every $i \in \mathbb{Z}$. In particular, $\mathcal{O}_1$ and $\mathcal{O}_{-1}$ are finitely generated $\mathcal{O}_0$-bimodules and it follows from Proposition \ref{prop:fsprime_char}(c) that $(\mathcal{O}_{-1}, \mathcal{O}_1, \psi)$ satisfies Condition (FS'). In other words, (c) holds.

%

Moreover, it follows from Proposition \ref{prop:nearly_char}(b) that, in particular, $\mathcal{O}_{-k} \mathcal{O}_k$ is unital for $k \geq 0$. Hence, $\mathcal{O}_{-k} \mathcal{O}_k = I_{\psi', \iota_{\mathcal{O}_0}^{CP}}^{(k)}$ is unital for $k \geq 0$. This establishes (d). 
\end{proof}


For unital strongly $\mathbb{Z}$-graded Cuntz-Pimsner rings, we obtain the following complete characterization:

\begin{theorem}
Let $\mathcal{O}_{(P,Q,\psi)}$ be a Cuntz-Pimsner ring of some system $(P,Q,\psi)$. Then, $\mathcal{O}_{(P,Q,\psi)}$ is unital strongly $\mathbb{Z}$-graded if and only if 
$$ \mathcal{O}_{(P,Q,\psi)} \cong_{\text{gr}} \mathcal{O}_{(P', Q', \psi')}$$ 
where $(P',Q',\psi')$ is an $R'$-system such that $\mathcal{O}_{(P',Q',\psi')}$ is well-defined and the following assertions hold:
\begin{enumerate}[(a)]
\begin{item}
$(P',Q',\psi')$ is a unital $R'$-system;
\end{item}
\begin{item}
$(\iota_{P'}^{CP}, \iota_{Q'}^{CP}, \iota_{R'}^{CP}, \mathcal{O}_{(P',Q',\psi')})$ is a semi-full and faithful covariant representation of $(P',Q',\psi')$;
\end{item}
\begin{item}
$\psi'$ is surjective.
\end{item}
\end{enumerate}
\label{thm:2}
\end{theorem}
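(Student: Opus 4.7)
My plan is to prove both directions separately, with the reverse implication being a direct application of Proposition \ref{prop:strong_suff} and the forward implication being obtained by specializing Theorem \ref{thm:epsilon} and then verifying two additional conditions.

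For the reverse direction, suppose $\mathcal{O}_{(P,Q,\psi)} \cong_{\text{gr}} \mathcal{O}_{(P',Q',\psi')}$ with an $R'$-system satisfying (a)--(c). The Cuntz-Pimsner representation $(\iota_{P'}^{CP}, \iota_{Q'}^{CP}, \iota_{R'}^{CP}, \mathcal{O}_{(P',Q',\psi')})$ is injective, surjective, and graded by its construction. Note that the faithfulness clause in (b) implicitly requires $(P', Q', \psi')$ to satisfy Condition (FS'), since faithfulness (Definition \ref{def:faithful}) is only defined under that hypothesis. Together with (c), Proposition \ref{prop:strong_suff} then applies and yields that $\mathcal{O}_{(P',Q',\psi')}$ is strongly $\mathbb{Z}$-graded. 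The unital $R'$-system hypothesis in (a) ensures that $\mathcal{O}_{(P',Q',\psi')}$ is unital, and both properties transfer to $\mathcal{O}_{(P,Q,\psi)}$ along the graded isomorphism.

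For the forward direction, suppose $\mathcal{O}_{(P,Q,\psi)}$ is unital strongly $\mathbb{Z}$-graded. By Lemma \ref{lem:semi-prime}(b), $\mathcal{O}_{(P,Q,\psi)}$ is pre-CP, so in particular $\Ann_{\mathcal{O}_0}(\mathcal{O}_1) \cap \Ann_{\mathcal{O}_0}(\mathcal{O}_1)^\bot = \{0\}$. By the implication chain (\ref{eq:implications}), $\mathcal{O}_{(P,Q,\psi)}$ is epsilon-strongly $\mathbb{Z}$-graded, so Theorem \ref{thm:epsilon} gives $\mathcal{O}_{(P,Q,\psi)} \cong_{\text{gr}} \mathcal{O}_{(\mathcal{O}_{-1}, \mathcal{O}_1, \psi')}$ with $\psi'(a \otimes a') = aa'$. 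This yields conditions (a), the semi-full part of (b), and Condition (FS') for $(\mathcal{O}_{-1}, \mathcal{O}_1, \psi')$. Condition (c) is then immediate since $\mathrm{Im}(\psi') = \mathcal{O}_{-1} \mathcal{O}_1 = \mathcal{O}_0$ by strong grading.

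The one remaining task is to verify the faithfulness clause in (b). By Corollary \ref{cor:reduction}, the CP representation is isomorphic as a covariant representation to the natural inclusion $(i_{\mathcal{O}_{-1}}, i_{\mathcal{O}_1}, i_{\mathcal{O}_0}, \mathcal{O}_{(P,Q,\psi)})$, and I will verify faithfulness there, relying on the fact that any isomorphism of covariant representations intertwines the induced homomorphisms $\pi_{T,S}$ from Proposition \ref{prop:pi} and therefore preserves faithfulness. Since strong grading gives $1_{\mathcal{O}_0} \in \mathcal{O}_1 \mathcal{O}_{-1}$, I write $1_{\mathcal{O}_0} = \sum_i q_i p_i$ with $q_i \in \mathcal{O}_1$, $p_i \in \mathcal{O}_{-1}$, and set $\Theta := \sum_i \theta_{q_i, p_i} \in \mathcal{F}_{\mathcal{O}_{-1}}(\mathcal{O}_1)$. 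For any $q \in \mathcal{O}_1$, $\Theta(q) = \sum_i q_i \psi'(p_i \otimes q) = \sum_i q_i p_i q = q$, which shows $\Theta = \Delta(1_{\mathcal{O}_0})$; applying $\pi_{i_{\mathcal{O}_1}, i_{\mathcal{O}_{-1}}}$ yields $\pi(\Delta(1_{\mathcal{O}_0})) = \sum_i q_i p_i = 1_{\mathcal{O}_0} = i_{\mathcal{O}_0}(1_{\mathcal{O}_0})$, establishing faithfulness. The most delicate step, and the main bookkeeping obstacle, is this last verification: identifying $\Delta(1_{\mathcal{O}_0})$ with a concrete finite sum of rank-one operators and transferring faithfulness through the representation isomorphism.
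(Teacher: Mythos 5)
Your proof is correct, and its overall architecture coincides with the paper's: the reverse direction is exactly an application of Proposition \ref{prop:strong_suff} (together with the observation, which the paper leaves implicit, that the faithfulness clause in (b) presupposes Condition (FS') and that unitality of the $R'$-system gives $1_{\mathcal{O}_{(P',Q',\psi')}}=\iota_{R'}^{CP}(1_{R'})$), and the forward direction runs through Lemma \ref{lem:semi-prime}(b) and Theorem \ref{thm:epsilon} and then checks (c) and faithfulness, just as in the paper. The only genuine divergence is the faithfulness verification. The paper argues abstractly: it invokes the result of Nystedt, Öinert and Pinedo that a strongly graded epsilon-strongly graded ring has $\epsilon_1=1$, and then uses Proposition \ref{prop:fsprime_char}(b) and Proposition \ref{prop:pi} to identify $\pi(\Delta(1_{\mathcal{O}_0}))$ with the multiplicative identity $\epsilon_1$ of $\mathcal{O}_1\mathcal{O}_{-1}$. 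You instead compute directly: writing $1_{\mathcal{O}_0}=\sum_i q_i p_i$ with $q_i\in\mathcal{O}_1$, $p_i\in\mathcal{O}_{-1}$ (strong grading), you check that $\sum_i\theta_{q_i,p_i}$ acts as the identity on $\mathcal{O}_1$, hence equals $\Delta(1_{\mathcal{O}_0})$ in $\mathcal{F}_{\mathcal{O}_{-1}}(\mathcal{O}_1)$, and then $\pi(\Delta(1_{\mathcal{O}_0}))=\sum_i q_ip_i=1$; your transfer of faithfulness along the isomorphism of covariant representations is also sound, since such an isomorphism $\phi$ satisfies $\pi_{T',S'}=\phi\circ\pi_{T,S}$ on the generators $\theta_{q,p}$ and both maps are well defined by Proposition \ref{prop:pi}. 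Your route buys a more self-contained argument (no appeal to the external $\epsilon_1=1$ result), at the cost of the small extra bookkeeping of identifying $\Delta(1_{\mathcal{O}_0})$ with a concrete finite sum of rank-one operators and moving between the inclusion representation and the Cuntz-Pimsner representation; your verification of (c) via $\operatorname{Im}(\psi')=\mathcal{O}_{-1}\mathcal{O}_1=\mathcal{O}_0$ is likewise a slightly more direct version of the paper's argument.
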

\begin{proof}
By Proposition \ref{prop:strong_suff}, (a) and (c) are sufficient for the ring $\mathcal{O}_{(P',Q',\psi')}$ to be strongly $\mathbb{Z}$-graded. 

Conversely, assume that $\mathcal{O}_{(P,Q,\psi)}$ is unital strongly $\mathbb{Z}$-graded. In particular, $\mathcal{O}_{(P,Q,\psi)}$ is epsilon-strongly $\mathbb{Z}$-graded. Moreover, $ \Ann_{\mathcal{O}_0}(\mathcal{O}_1) \cap (\Ann_{\mathcal{O}_0}(\mathcal{O}_1))^\bot = \{ 0 \}$ by Lemma \ref{lem:semi-prime}(b). Then, by Theorem \ref{thm:epsilon}, $\mathcal{O}_{(P,Q,\psi)} \cong_{\text{gr}} \mathcal{O}_{(\mathcal{O}_{-1}, \mathcal{O}_1, \psi')}$ where $(\mathcal{O}_{-1}, \mathcal{O}_1, \psi')$ satisfies Condition (FS'), (b) is satisfied and $I_{\psi',\iota_{\mathcal{O}_0}^{CP}}^{(k)}$ is unital for $k \geq 0$. Since $\mathcal{O}_{(\mathcal{O}_{-1}, \mathcal{O}_1, \psi)}$ is unital strongly $\mathbb{Z}$-graded, $$1_{\mathcal{O}_{(\mathcal{O}_{-1}, \mathcal{O}_1, \psi')}} = \iota_{\mathcal{O}_0}^{CP}(1_{\mathcal{O}_0}) \in \mathcal{O}_0 = \mathcal{O}_{-1} \mathcal{O}_1 = I_{\psi', \iota_{\mathcal{O}_0}^{CP}}^{(1)}.$$ Since $\iota_{\mathcal{O}_0}^{CP}$ is injective, we get that $1_{\mathcal{O}_0} \in \text{Im}(\psi')$. Hence, $\psi'$ is surjective.

Furthermore, since $\mathcal{O}_{(\mathcal{O}_{-1}, \mathcal{O}_1, \psi')}$ is an epsilon-strongly $\mathbb{Z}$-graded ring that is also strongly $\mathbb{Z}$-graded, we must have $\epsilon_1 = 1$ (see \cite[Prop. 8]{nystedt2016epsilon}) where $\epsilon_1$ is the multiplicative identity element of the ring $\mathcal{O}_1 \mathcal{O}_{-1}$. By Condition (FS') and Proposition \ref{prop:fsprime_char}(b), we have that $\Delta(1_{\mathcal{O}}) \in \mathcal{F}_P(Q)$. Then, by Proposition \ref{prop:pi}, $\pi_{\iota_{\mathcal{O}_{1}}^{CP}, \iota_{\mathcal{O}_{-1}}^{CP}}(\Delta(1_{\mathcal{O}_0})) \in \mathcal{O}_1 \mathcal{O}_{-1}$ is a multiplicative identity element of $\mathcal{O}_1 \mathcal{O}_{-1}$. Thus, $\pi_{\iota_{\mathcal{O}_{1}}^{CP}, \iota_{\mathcal{O}_{-1}}^{CP}}(\Delta(1_{\mathcal{O}_0}))= \epsilon_1 = 1$ and therefore $(\iota_{\mathcal{O}_{-1}}^{CP}, \iota_{\mathcal{O}_{1}}^{CP}, \iota_{\mathcal{O}_0}^{CP}, \mathcal{O}_{(\mathcal{O}_{-1}, \mathcal{O}_1, \psi')})$ is a faithful representation of $(\mathcal{O}_{-1}, \mathcal{O}_1, \psi')$.
\end{proof}

\section{Examples}
\label{sec:ex}

In this section, we collect some important examples.

\begin{example}(Non-nearly epsilon-strongly $\mathbb{Z}$-graded Cuntz-Pimsner ring)
Let $R$ be an idempotent ring that is not s-unital (see e.g. \cite[Expl. 5]{nystedt2018unital}). Put $P=Q=\{ 0 \}$ and let $\psi \equiv 0$ be the zero map. Note that $(P,Q,\psi)$ is an $R$-system that satisfies Condition (FS') trivially. It is not hard to see that the Toeplitz ring is given by $\mathcal{T}_0 = R$, and $\mathcal{T}_{i} = \{ 0 \}$ for all $i \ne 0$. Furthermore, note that $\ker \Delta = R$. Recall that an ideal $J$ of $R$ is called faithful if $J \cap \ker \Delta = \{ 0 \}$. Clearly, $J := (0)$ is the maximal faithful and $\psi$-compatible ideal of $R$. It follows that the Cuntz-Pimsner ring $\mathcal{O}_{(P,Q,\psi)}$ is well-defined and coincides with the Toeplitz ring. Since $\mathcal{T}_0 = R = R^2 = \mathcal{T}_0 \mathcal{T}_0$ is not s-unital it follows by Proposition \ref{prop:nearly_char}(a) that the Cuntz-Pimsner ring $\mathcal{O}_{(P,Q,\psi)}=\mathcal{T}_{(P,Q,\psi)}$ is not nearly epsilon-strongly $\mathbb{Z}$-graded. This shows that the assumption of $(P,Q,\psi)$ being an s-unital system in Proposition \ref{prop:nearly_epsilon_suff} cannot be removed. 
\label{ex:2}
\end{example}

The following example shows that for some graphs, the standard Leavitt path algebra covariant representation is semi-full (see Section \ref{sec:lpa}).

\begin{example}
\begin{displaymath}
	\xymatrix{
	\bullet_{v_1}  \ar[r]^{f_1} & \bullet_{v_2}  
	}
\end{displaymath}
Let $K$ be a unital ring and let $E$ consist of two vertices $v_1, v_2$ connected by a single edge $f$. Consider the associated standard Leavitt path algebra system $(P,Q,\psi)$ and the standard Leavitt path algebra covariant representation $(\iota_Q^{CP}, \iota_P^{CP}, \iota_R^{CP}, \mathcal{O}_{(P,Q,\psi)})$. To save space we write $I_k = I_{\psi, \iota_R^{CP}}^{(k)}$ for $k \geq  0$. Note that $I_0 = (\{ v_1, v_2 \})$, $ I_1 = ( v_2 ) $ and $ I_k = (0)$ for $k > 2$. 
Furthermore, since $f_1 f_1^* = v_1$ we see that $(L_K(E))_0 = I_0$. 

Moreover, note that $(L_K(E))_1 = \Span_K \{ f_1 \},$ $ (L_K(E))_{-1} = \Span_K \{ f_1^* \}$ and hence we see that $(L_K(E))_{-1} (L_K(E))_1 = (v_2) = I_1$. Thus, $(\iota_P, \iota_Q, \iota_R, \mathcal{O}_{(P,Q,\psi)})$ is a semi-full covariant representation of $(P,Q,\psi)$. Furthermore, $(P,Q,\psi)$ satisfies Condition (FS') since $E$ is finite (see Lemma \ref{lem:finitely_many_edges}) and $I_k$ is unital for $k \geq 0$. Thus, $L_K(E)$ is epsilon-strongly $\mathbb{Z}$-graded by Theorem \ref{thm:epsilon}.

\label{ex:3}
\end{example}

In general, however, it is not true that the standard Leavitt path algebra covariant representation is semi-full as the following example shows.

\begin{example}(cf. \cite[Expl. 4.1]{nystedt2017epsilon})
Let $K$ be a unital ring and consider the following finite directed graph $E$.
\begin{displaymath}
	\xymatrix{
	\bullet_{v_1} & \ar[l]_{f_1} \bullet_{v_2}  \ar[r]^{f_2} & \bullet_{v_3} & \ar[l]_{f_3} \bullet_{v_4} & \ar[l]_{f_4} \bullet_{v_5}
	}
\end{displaymath}
\noindent
Let $(P,Q,\psi)$ be the standard Leavitt path algebra system associated to $E$ and consider the standard Leavitt path algebra covariant representation, 
\begin{equation}
(\iota_Q^{CP}, \iota_P^{CP}, \iota_R^{CP}, \mathcal{O}_{(P,Q,\psi)}).
\label{eq:3}
\end{equation}
We write $S_i= (L_K(E))_i$ and $I_i = I_{\psi, \iota_R^{CP}}^{(i)}$ to save space. Note that,
\begin{align*}
	&S_0 = \Span_K \{ v_1, v_2, v_3, v_4, v_5, f_1f_1^*, f_2f_3^*, f_2f_2^*, f_3f_2^* \}, \\
	&S_1 = \Span_K \{ f_1, f_2, f_3, f_4, f_4f_3f_2^* \}, \quad \quad
	S_{-1} = \Span_K \{ f_1^*, f_2^*, f_3^*, f_4^*, f_2f_3^*f_4^* \}, \\
	&S_2 = \Span_K \{ f_4f_3 \}, \quad \quad S_{-2} = \Span_K \{ f_3^*f_4^* \}, \quad \text{and} \quad S_{n} = \{0\}, \text{ for } |n|>2.
\end{align*}

\noindent
Furthermore,
\begin{align*}
	&I_0 = ( \{ v_1, v_2, v_3, v_4, v_5 \}), 
	&I_1 = ( \{ v_1, v_3, v_4 \}),  \\
	&I_2 = ( \{  v_3 \} ), 
	&I_k = ( 0 ), \quad k > 2.
\end{align*}

In particular, we have that $S_{-1} S_1 = (\{ v_1, v_3, v_4, f_2 f_2^* \}) \supsetneqq I_1$ because $f_2 f_2^* \not \in I_1$. Hence, the standard Leavitt path algebra covariant representation is not semi-full. In any case, however, we have that $\mathcal{O}_{(P,Q,\psi)} \cong_{\text{gr}} L_K(E)$ (see Section \ref{sec:lpa}). On the other hand, by Proposition \ref{prop:lpa-cp}, we have that  $L_K(E)$ is pre-CP. Thus, by Corollary \ref{cor:nearly_cuntz}, $L_K(E)$ is realized by the Cuntz-Pimsner representation,
\begin{equation}
(\iota_{(L_K(E))_{-1}}^{CP}, \iota_{(L_K(E))_1}^{CP}, \iota_{(L_K(E))_0}^{CP}, \mathcal{O}_{((L_K(E))_{-1}, (L_K(E))_{1}, \psi')})
\label{eq:4}
\end{equation} of the $(L_K(E))_0$-system $(L_K(E))_{-1}, (L_K(E))_1, \psi')$. Moreover, the corollary implies that (\ref{eq:4}) is semi-full and $\mathcal{O}_{((L_K(E))_{-1}, (L_K(E))_{1}, \psi')} \cong_{\text{gr}} L_K(E)$. Since (\ref{eq:3}) is not semi-full and (\ref{eq:4}) is semi-full, it follows by Proposition \ref{prop:semi-full_iso} that the covariant representations (\ref{eq:3}) and (\ref{eq:4}) cannot be isomorphic. Thus, $L_K(E)$ is realizable as a Cuntz-Pimsner ring in two different ways. 
\label{ex:1}
\end{example}

The following example shows that (a) is crucial in Theorem \ref{thm:epsilon}. It also gives an example of a nearly epsilon-strongly $\mathbb{Z}$-graded ring that is not epsilon-strongly $\mathbb{Z}$-graded.
\begin{example}(cf. \cite[Expl. 4.5]{lannstrom2018induced})
Let $K$ be a unital ring and consider the infinite discrete graph $E$ consisting of countably infinitely many vertices but no edges. 

\begin{displaymath}
	\xymatrix{
	\bullet_{v_1} \qquad \bullet_{v_2} \qquad \bullet_{v_3} \qquad  \bullet_{v_4}  \qquad \bullet_{v_5}  \qquad \bullet_{v_6}  \qquad \bullet_{v_7}  \qquad \bullet_{v_8}   \qquad \bullet_{v_9}   \qquad \bullet_{v_{10}}   \qquad \dots
	}
\end{displaymath}
The standard Leavitt path algebra system is given by $R=\bigoplus_{v \in E^0} \eta_{v}$, $P=Q=\{ 0 \}$. The $R$-system $(P,Q,\psi)$ trivially satisfies Condition (FS'). However, $(P,Q,\psi)$ is not unital as $R$ does not have a multiplicative identity element. However, note that $(P,Q,\psi)$ is s-unital. 

We show that the standard  Leavitt path algebra covariant representation of $E$ is semi-full. Since $P=Q=\{0 \}$ and $\psi =0$ it follows that the grading is given by $\mathcal{O}_0 = R$ and $\mathcal{O}_i = \{ 0 \}$ for $i \ne 0$ (see Example \ref{ex:2}). Furthermore, $I_{\psi, \iota_R^{CP}}^{(k)} = ( 0 )$ for $k > 0$. Thus, the standard Leavitt path algebra covariant representation satisfies (b)-(d) in Theorem \ref{thm:epsilon} but not (a). Since $E$ contains infinitely many vertices, $L_K(E)$ is not unital (see \cite[Lem. 1.2.12]{abrams2017leavitt}). By Remark \ref{rem:unital_epsilon}, $L_K(E)$ is not epsilon-strongly $\mathbb{Z}$-graded (cf.  \cite[Expl. 4.5]{lannstrom2018induced}). Thus, (a) in Theorem \ref{thm:1} cannot be removed. On the other hand, it follows from Theorem \ref{thm:1} that $L_K(E)$ is nearly epsilon-strongly $\mathbb{Z}$-graded. 
\end{example}

\section{Noetherian and artinian corner skew Laurent polynomial rings}
\label{sec:app}

We end this article by characterizing noetherian and artinian corner skew Laurent polynomial rings. The following proposition can be proved in a straightforward manner using direct methods, but we show it as a special case of our results.

\begin{proposition}
Let $R$ be a unital ring, let $e \in R$ be an idempotent and let $\alpha \colon R \to eRe$ be a corner ring isomorphism. Then the corner skew Laurent polynomial ring $R[t_{+}, t_{-}; \alpha]$ is epsilon-strongly $\mathbb{Z}$-graded.
\label{prop:corner_skew_epsilon}
\end{proposition}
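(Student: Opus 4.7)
My plan is to apply Proposition~\ref{prop:epsilon_suff} to the $R$-system $(P', Q', \psi') := (A_{-1}, A_1, \psi')$ arising from the natural $\mathbb{Z}$-grading of $A := R[t_+, t_-; \alpha]$. Recall that $A_0 = R$, $A_n = t_-^n R$, and $A_{-n} = R t_+^n$ for $n > 0$, while $\psi'$ is induced by multiplication in $A$; the relevant representation will be the inclusion $(i_{A_{-1}}, i_{A_1}, i_R, A)$. Setting $e_n := t_+^n t_-^n \in R$, a short induction using $t_- t_+ = 1_R$ yields $t_-^n t_+^n = 1_R$, so each $e_n$ is an idempotent in $R$; moreover $e_n = \alpha^{n-1}(e)$, and the iterated corner isomorphism $\alpha^n$ restricts to a ring isomorphism $R \xrightarrow{\sim} e_n R e_n$.

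I would first verify the routine hypotheses. $(A_{-1}, A_1, \psi')$ is a unital $R$-system because $R$ is unital and both $A_{\pm 1}$ are unital $R$-bimodules. Condition (FS') follows from Proposition~\ref{prop:fsprime_char}(c): $A_{-1} = R t_+$ and $A_1 = t_- R$ are cyclic as $R$-modules, and Condition (FS) is witnessed by $\theta_{t_-, t_+} \in \mathcal{F}_{A_{-1}}(A_1)$ acting as the identity on $A_1$ via $\theta_{t_-, t_+}(t_- r) = t_- \psi'(t_+ \otimes t_- r) = t_- \cdot e r = t_- r$ (using $t_- e = t_-$); the symmetric statement handles $A_{-1}$. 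The inclusion representation is graded, injective, and surjective by inspection, and is semi-full: since $A$ is semi-saturated (so $A_{\pm n} = A_{\pm 1}^n$), every element of $A_{-k}A_k$ is a sum of products $p_1 \cdots p_k q_k \cdots q_1$ with $p_i \in A_{-1}$ and $q_j \in A_1$, and such products lie in $\mathrm{Im}(\psi'_k)$; a direct calculation then gives $I^{(k)}_{\psi', i_R} = A_{-k} A_k = R e_k R$.

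The remaining hypothesis is the crux of the proof: I must show that each ideal $I^{(k)} = R e_k R$ of $R$ is unital. The claim is that $e_k$ serves as a two-sided multiplicative identity of $R e_k R$. For an arbitrary element $x = \sum r_i e_k s_i \in R e_k R$ one must verify $e_k \cdot x = x \cdot e_k = x$, and the computation uses the defining relations of $A$ together with the identities $r t_+^k = r e_k t_+^k$ and $t_-^k s = t_-^k e_k s$ (both derived from $e_k = t_+^k t_-^k$ and $t_-^k t_+^k = 1_R$) and the isomorphism $\alpha^k \colon R \xrightarrow{\sim} e_k R e_k$, so as to reduce the question to an identity that always holds in $R$. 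Concretely, one exploits that in the ambient ring $A$ the equality $r e_k s = r t_+^k \cdot t_-^k s$ realises $x$ as an element of $A_{-k} A_k$, whence multiplication by $e_k$ on either side acts trivially on $x$ by the idempotent identities above.

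Having verified all hypotheses, Proposition~\ref{prop:epsilon_suff} yields that $A$ (realized as the ring of a semi-full covariant representation of $(A_{-1}, A_1, \psi')$) is epsilon-strongly $\mathbb{Z}$-graded. I expect the main obstacle to be the technical verification that $R e_k R$ is unital with identity $e_k$: in the general (possibly non-commutative) corner skew setting this requires a careful interaction between the defining relations of the corner skew Laurent polynomial ring and the structure imposed by the corner isomorphism $\alpha$, bridging the gap between commutator relations in $A$ and equalities purely in $R$.
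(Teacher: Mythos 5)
Your strategy is the same as the paper's: verify the hypotheses of Proposition \ref{prop:epsilon_suff} for the $A_0$-system $(A_{-1},A_1,\psi')$ attached to $A=R[t_+,t_-;\alpha]$. Your handling of the routine hypotheses is correct and, if anything, more self-contained than the paper's (the paper imports Condition (FS), the representation isomorphism and semi-fullness from \cite[Expl.~3.4]{2018arXiv180810114O} together with Theorem \ref{thm:clark}, Proposition \ref{rem:5} and Proposition \ref{prop:fsprime_char}, whereas you check Condition (FS') directly with $\theta_{t_-,t_+}$ and $\theta_{t_+,t_-}$ and semi-fullness via semi-saturation; both routes are fine).

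The genuine gap is exactly at the step you yourself flag as the crux, and it is not a mere technicality: the claim that $e_k=t_+^kt_-^k=\alpha^{k-1}(e)$ is a two-sided identity of $I^{(k)}=Re_kR$ is false in general, and your sketched reduction cannot establish it. Left multiplication gives $e_k\cdot(re_ks)=e_kre_ks$, and nothing removes the leftmost $e_k$: the identities $rt_+^k=re_kt_+^k$ and $t_-^ks=t_-^ke_ks$ only show $re_ks\in A_{-k}A_k$, while the relation $t_+i(r)=i(\alpha(r))t_+$ moves coefficients past $t_+$ in one direction only, so the manipulation you envisage would require $\alpha^{-k}(r)$ for arbitrary $r\in R$, which is undefined ($\alpha^{-k}$ exists only on $e_kRe_k$). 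More structurally, if an ideal $I$ of $R$ has an identity element $u$, then for every $r\in R$ one has $ru,ur\in I$, whence $uru=ru$ and $uru=ur$, so $u$ is a \emph{central} idempotent of $R$; thus $Re_kR$ can be unital only if it is generated by a central idempotent, and $e_k$ is essentially never that element. Be aware that the paper's own proof of this step asserts the same identity element $t_+^kt_-^k$ and performs the same one-sided rewriting with $\alpha^{-k}(r)$ for general $r$, so mirroring it will not close your gap; some genuinely new input is needed, and indeed the unitality of $A_{-k}A_k$ appears to fail without extra hypotheses. For instance, let $R$ be the ring of all upper triangular $\mathbb{N}\times\mathbb{N}$ matrices over a field, $e=\mathrm{diag}(0,1,1,\dots)$, and $\alpha\colon R\to eRe$ the shift isomorphism: every element of $ReR$ has vanishing $(1,1)$-entry, the only central idempotents of $R$ are $0$ and $1$, hence $ReR\cong A_{-1}A_1$ has no identity element and, by Proposition \ref{prop:nearly_char}(b), this corner skew Laurent polynomial ring is not epsilon-strongly $\mathbb{Z}$-graded. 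So the step you defer is not just the main obstacle, it is the place where the argument (and the statement in this generality) breaks down; a correct treatment must either restrict to situations where $R\alpha^{k-1}(e)R$ is generated by a central idempotent (e.g.\ $e$ full, where it equals $R$) or replace $e_k$ by an actual identity element produced by other means.
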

\begin{proof}
Recall that $R[t_{+}, t_{-}; \alpha] = \bigoplus_{i \in \mathbb{Z}} A_i$ is $\mathbb{Z}$-graded by putting $A_0 = R $, $A_i = R t_{+}^i$ for $i < 0$  and $A_i = t_{-}^i R$ for $i > 0$. Let  $\psi' \colon A_{-1} \otimes A_1 \to A_0$ be the map defined by $\psi(a' \otimes a) = a' a$ for $a' \in A_{-1}$ and $a \in A_1$. Since $A_0=R$ is a unital ring, the $A_0$-system $(A_{-1},A_1, \psi')$ is unital. In \cite[Expl. 3.4]{2018arXiv180810114O}, it is shown that $R[t_{+}, t_{-}; \alpha]$ satisfies the conditions in Theorem \ref{thm:clark}. This implies that $(A_{-1}, A_1, \psi')$ satisfies Condition (FS) and, 
\begin{equation}
 (i_{A_{-1}}, i_{A_1}, i_{A_{0}}, R[t_{+}, t_{-}; \alpha]) \cong_{\text{r}} (\iota_{A_{-1}}^{CP}, \iota_{A_1}^{CP}, \iota_{A_0}^{CP}, \mathcal{O}_{(A_{-1}, A_1, \psi')}).
 \label{eq:7}
\end{equation}
Note that $A_1 = t_{-} R$ is finitely generated as a right $A_0$-module and $A_{-1} = R t_{+}$ is finitely generated as a left $A_0$-module. It follows from Proposition \ref{prop:fsprime_char}, that $(A_{-1}, A_1, \psi')$ satisfies Condition (FS'). Furthermore, by Proposition \ref{rem:5}, the covariant representation (\ref{eq:7}) is semi-full.

Next, we show that $I_{\psi', \iota_{R}^{CP}}^{(k)} = A_{-k} A_k$ is unital with multiplicative identity element $t_{+}^k t_{-i}^k = i(e) \in A_{-k} A_k$ for each $k > 0$. Fix a non-negative integer $k > 0$ and note that any  element $x \in A_{-k}A_k=(R t_{+}^k)( t_{-i}^k R)$ is a finite sum of elements of the form $r t_{+}^k t_{-}^k r' = t_{+}^k \alpha^{-k}(r) t_{-}^k r' = r t_{+}^k \alpha^{-k}(r') t_{-}^k$ where $r, r' \in R$. For any $r,r'\in R$, we get that, $$i(e) r t_{+}^k t_{-}^k r' =  (t_{+}^k t_{-}^k) (t_{+}^k \alpha^{-k}(r) t_{-}^k r') = t_{+}^k (t_{-}^k t_{+}^k) \alpha^{-k}(r) t_{-}^k r' = t_{+}^k (1) \alpha^{-k}(r) t_{-}^k r' = r t_{+}^k t_{-}^k r'.$$ It follows that $i(e) x = x$. A similar argument shows that $x i(e) = x$. By Theorem \ref{thm:epsilon}, it now follows that $R[t_{+}, t_{-}; \alpha]$ is epsilon-strongly $\mathbb{Z}$-graded.
\end{proof}


We recall the following Hilbert basis theorem for epsilon-strongly $\mathbb{Z}$-graded rings.

\begin{theorem}(\cite[Thm. 1.1, Thm. 1.2]{lannstrom2018chain})
Let $S=\bigoplus_{i \in \mathbb{Z}} S_i$ be an epsilon-strongly $\mathbb{Z}$-graded ring. The following assertions hold:
\begin{enumerate}[(a)]
\begin{item}
 If $S_0$ is left (right) noetherian, then $S$ is left (right) noetherian;
\end{item}
\begin{item}
If $S_0$ is left (right) artinian and there exists some positive integer $n$ such that $S_i = \{ 0 \}$ for all $|i| > n$, then $S$ is left (right) artinian. 
\end{item}
\end{enumerate}
\label{thm:hilbert}
\end{theorem}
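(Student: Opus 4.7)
My plan is to handle the two parts with different strategies, both of which exploit the local-unit structure supplied by the idempotents $\epsilon_i \in S_i S_{-i}$ that witness the epsilon-strong grading (see Proposition~\ref{prop:nearly_char}(b)).

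For part (b), the finiteness condition $S_i = \{0\}$ for $|i| > n$ reduces matters to a module-theoretic statement. First I would show that each homogeneous component $S_i$ is finitely generated as a left $S_0$-module. This follows from the identity $S_i = \epsilon_i S_i = S_i S_{-i} S_i$ combined with the fact that $S_i S_{-i}$ is a unital ideal of $S_0$ with identity $\epsilon_i$; concretely, one can invoke \cite[Prop.~7(iv)]{nystedt2016epsilon}, which was already used in the proof of Theorem~\ref{thm:2} for precisely this purpose. Since the direct sum is finite, $S$ itself is finitely generated as a left $S_0$-module. A finitely generated module over a left artinian ring is artinian, so $S$ is artinian as a left $S_0$-module, and hence a fortiori as a left $S$-module. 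The right-handed statement is symmetric.

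For part (a), I would adapt the classical Hilbert basis theorem argument to the graded setting. Given a left ideal $I \subseteq S$ and an integer $m$, define $L_m(I) \subseteq S_m$ to be the set of ``leading coefficients of degree $m$'' of elements of $I$, formalized using the grading projections together with a fixed filtration $S_{\leq m} := \bigoplus_{i \leq m} S_i$. The epsilon-strong structure yields that $L_m(I)$ is a left $S_0$-submodule of $S_m$, and (as in part (b)) $S_m$ is finitely generated as a left $S_0$-module, so noetherianity of $S_0$ transfers to $S_m$. Given an ascending chain of left ideals $I^{(1)} \subseteq I^{(2)} \subseteq \ldots$, each fixed-degree chain $\{L_m(I^{(k)})\}_k$ stabilizes. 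A finite filtration-and-diagonal argument then yields stabilization of the original chain.

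The principal obstacle is the two-sided nature of the $\mathbb{Z}$-grading: classical Hilbert basis arguments exploit the fact that $\mathbb{N}$-gradings have a lowest degree to pivot around, which is absent here. My plan for overcoming this is to split $S$ into its nonnegative part $S_{\geq 0} = \bigoplus_{i \geq 0} S_i$ and nonpositive part $S_{\leq 0} = \bigoplus_{i \leq 0} S_i$, run the leading-coefficient argument on each ``half'' as in the $\mathbb{N}$-graded case, and then use the symmetric identity $S_i = S_i S_{-i} S_i$ together with the idempotents $\epsilon_i$ to glue: at each fixed degree $m$, the component $S_m$ sits inside the unital bimodule $\epsilon_{|m|} S_m$, so leading-coefficient arguments terminate componentwise and the diagonal stabilization pushes through.
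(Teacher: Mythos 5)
The paper does not actually prove this theorem; it imports it from \cite[Thm.~1.1, Thm.~1.2]{lannstrom2018chain}, so there is no in-paper argument to compare against and your proposal has to stand on its own. Your part (b) does: writing $\epsilon_i=\sum_j a_jb_j$ with $a_j\in S_i$, $b_j\in S_{-i}$, the identity $s=\epsilon_i s=\sum_j a_j(b_js)$ shows each $S_i$ is finitely generated as a left $S_0$-module (this is the content of \cite[Prop.~7(iv)]{nystedt2016epsilon} you invoke), the support hypothesis makes $S$ a finitely generated unital left $S_0$-module ($1_S\in S_0$ by Remark \ref{rem:unital_epsilon}), and artinianity of $S$ as an $S_0$-module passes to $S$ as a ring. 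That half is fine.

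Part (a), however, has a genuine gap at its central step. The modules $L_m(I)\subseteq S_m$ live in different $S_0$-modules as $m$ varies, so there is no analogue of the classical inclusion $L_n(I)\subseteq L_{n+1}(I)$ inside one fixed noetherian module, and that inclusion is precisely what upgrades ``each fixed-degree chain stabilizes'' to a uniform stabilization; with infinitely many degrees $m$ (of both signs) in play, stabilization at degree-dependent indices $k(m)$ proves nothing, and your ``finite filtration-and-diagonal argument'' is asserted rather than given. The proposed reduction to the halves is also not an available black box: there is no Hilbert basis theorem for general $\mathbb{N}$-graded rings with noetherian degree-zero part (think of infinitely many polynomial variables), and an epsilon-strong grading is \emph{not} assumed semi-saturated, so you may not use $S_{m+1}=S_1S_m$ to shift leading coefficients up a degree; moreover a left ideal of $S$ does not restrict to ideals of $S_{\geq 0}$ and $S_{\leq 0}$ in any straightforward way. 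What the epsilon-strong structure actually offers, and what your sketch never uses, is the pairing with $S_{-m}$: if $s\in I$ has top degree $m$ and top coefficient $a$, then for $b\in S_{-m}$ the element $bs\in I$ has top degree $0$ with top coefficient $ba$, so $S_{-m}L_m(I)\subseteq L_0(I)$ and hence $a=\epsilon_m a\in S_m L_0(I)$; this transports all leading-coefficient data into left ideals of the single noetherian ring $S_0$. Even granting that, one still has to resolve the absence of a lowest degree when recovering elements of $I$ from leading data (a downward induction with no floor), e.g.\ by also controlling bottom coefficients or the pieces $I\cap S_{\leq m}$. None of this is addressed, so as it stands part (a) is a plan for a proof rather than a proof.
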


Applying Theorem \ref{thm:hilbert} to the special case of corner skew Laurent polynomial rings, we obtain the following result. 

\begin{corollary}
Let $R$ be a unital ring and let $\alpha \colon R \to eRe$ be a ring isomorphism where $e$ is an idempotent of $R$. Consider the corner skew Laurent polynomial ring $R[t_{+}, t_{-}; \alpha]$. The following assertions hold:
\begin{enumerate}[(a)]
\begin{item}
$R[t_{+}, t_{-}; \alpha]$ is left (right) noetherian if and only if $R$ is left (right) noetherian;
\end{item}
\begin{item}
$R[t_{+}, t_{-}; \alpha]$ is neither left nor right artinian.
\end{item}
\end{enumerate}
\label{cor:artinian}
\end{corollary}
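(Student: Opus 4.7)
The plan is to exploit that, by Proposition \ref{prop:corner_skew_epsilon}, $S := R[t_+, t_-; \alpha]$ is epsilon-strongly $\mathbb{Z}$-graded with principal component $A_0 = R$ and $1_R = 1_S$. The noetherian direction of (a) and the non-artinianness in (b) then follow by combining Theorem \ref{thm:hilbert} with standard degree-matching arguments in $\mathbb{Z}$-graded rings with identity in degree zero.

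For part (a), Theorem \ref{thm:hilbert}(a) directly handles ``$R$ noetherian $\Rightarrow$ $S$ noetherian''. For the converse, I would take an ascending chain $I_1 \subseteq I_2 \subseteq \ldots$ of left ideals of $R = A_0$ and consider the induced chain $SI_1 \subseteq SI_2 \subseteq \ldots$ of left $S$-ideals, which stabilizes by hypothesis. Decomposing $SI_n = \bigoplus_j A_j I_n$ according to the grading and using that $1 \in A_0$, one sees that $SI_n \cap A_0 = A_0 I_n = I_n$, so $(I_n)$ stabilizes as well. The right-handed case is symmetric.

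For part (b), I would first note that $\text{Supp}(S) = \mathbb{Z}$, since the relation $t_-^n t_+^n = 1$ forces $t_\pm^n \ne 0$ for all $n \geq 0$. The argument then splits into two cases. If $e \ne 1_R$, then $t_-$ has right inverse $t_+$ (from $t_- t_+ = 1$) but no left inverse: any $s \in S$ with $st_- = 1$ must by degree matching lie in $A_{-1} = Rt_+$, whence $s = r t_+$ for some $r \in R$ gives $st_- = re = 1$, forcing $e = 1$ and contradicting the case. Since every one-sided artinian ring is Dedekind-finite, this rules out both left and right artinianness. If instead $e = 1_R$, then $S = R[t, t^{-1}; \alpha]$ is a classical skew Laurent polynomial ring and the Dedekind-finiteness shortcut collapses because $t_-$ is now a two-sided unit. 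In that sub-case, I would use the descending-chain analog of part (a)'s argument to reduce to the case where $R$ is left artinian (hence semilocal), pass to a simple quotient $\bar R$ of $R$ via an $\alpha$-invariant maximal two-sided ideal, and exhibit an infinite strictly descending chain of left ideals of $\bar R[t, t^{-1}; \bar\alpha]$ by comparison with the non-artinian commutative Laurent polynomial ring over the field of $\bar\alpha$-fixed elements of $Z(\bar R)$.

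The main obstacle is precisely the sub-case $e = 1_R$ of part (b): here the clean Dedekind-finiteness argument fails, and constructing an explicit infinite descending chain requires a more involved reduction-to-simple-quotient step as sketched above. A cleaner alternative, if available in the reference \cite{lannstrom2018chain}, would be to invoke a converse-type structural result asserting that a left artinian epsilon-strongly $\mathbb{Z}$-graded ring must have finite support; this would immediately contradict $\text{Supp}(S) = \mathbb{Z}$ and dispatch both cases of (b) uniformly.
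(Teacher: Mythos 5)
Your part (a) is fine: the direction ``$R$ noetherian $\Rightarrow$ $S$ noetherian'' is Theorem \ref{thm:hilbert}(a), and your converse (extend a chain of left ideals of $A_0=R$ to $S$ and intersect back with $A_0$, using $1\in A_0$ and the gradedness of $SI_n$) is the standard argument the paper leaves as ``straightforward''. In part (b), your observation that $t_-^nt_+^n=1$ forces $A_{\pm n}\ne\{0\}$, and your Dedekind-finiteness argument in the case $e\ne 1_R$ (if $st_-=1$ then the degree $-1$ component of $s$ already works, so $s=rt_+$, $re=1$, hence $e=re^2=re=1$), are both correct.

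The genuine gap is the sub-case $e=1_R$, exactly where you flag the obstacle. First, an $\alpha$-invariant \emph{maximal} two-sided ideal of $R$ need not exist: take $R=k\times k$ with $\alpha$ the swap of the two factors; $\alpha$ permutes the two maximal ideals without fixing either, so ``pass to a simple quotient $\bar R$'' is not available as stated. One can repair this by quotienting by a maximal $\alpha$-invariant ideal, obtaining a finite product of simple artinian rings cyclically permuted by $\bar\alpha$, but then the fixed subring of the center and the comparison step must be redone in that setting. Second, even granting a simple $\bar R$, the ``comparison with the commutative Laurent polynomial ring'' is only a plan: to get a \emph{strictly} descending chain such as $\bar R[t,t^{-1};\bar\alpha](t-1)^n$ you still need an argument (e.g.\ that $\bar R[t,t^{-1};\bar\alpha]$ is free as a right module over $Z(\bar R)^{\bar\alpha}[t,t^{-1}]$, so that non-invertibility of $t-1$ in that commutative domain transfers), and none is given. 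So as written the proof of (b) is incomplete.

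The ``cleaner alternative'' you mention conditionally is in fact the paper's proof, and it is available: the results of \cite{lannstrom2018chain} cited in Theorem \ref{thm:hilbert} are characterizations, i.e.\ an epsilon-strongly $\mathbb{Z}$-graded ring is left (right) artinian if and only if its principal component is left (right) artinian \emph{and} the support is finite. Combined with Proposition \ref{prop:corner_skew_epsilon} and $A_{-n}=Rt_+^n\ne\{0\}$ for all $n>0$, this gives (b) uniformly, with no case split on $e$ and no quotient construction.
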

\begin{proof}
(a): Straightforward.

(b): By Proposition \ref{prop:corner_skew_epsilon}, $R[t_{+}, t_{-}; \alpha] = \bigoplus_{i \in \mathbb{Z}} A_i$ is epsilon-strongly $\mathbb{Z}$-graded where $A_0 = R $, $A_i = R t_{+}^i$ for $i < 0$  and $A_i = t_{-}^i R$ for $i > 0$. By Theorem \ref{thm:hilbert}(b), $R[t_{+}, t_{-}; \alpha]$ is left (right) artinian if and only if $A_0$ is left (right) artinian and $|\text{Supp}(R[t_{+}, t_{-}; \alpha])| < \infty$. However, since $t_{+}^n \ne 0$ for every $n > 0$, it follows that $A_{-n} = R t_{+}^n \ne \{ 0 \}$ for every $n > 0$. Hence, $\text{Supp}(R[t_{+}, t_{-}; \alpha])$ is infinite and $R[t_{+}, t_{-}; \alpha]$ is neither left nor right artinian.
\end{proof}

\section*{acknowledgement}
This research was partially supported by the Crafoord Foundation (grant no. 20170843). The author is grateful to Eduard Ortega for pointing out the characterization in Proposition \ref{prop:fsprime_char}(c). The author is also grateful to Stefan Wagner, Johan Öinert and Patrik Nystedt for giving feedback and comments that helped to improve this manuscript. 


\printbibliography

\end{document}